\documentclass[3p,10pt,nopreprintline]{elsarticle}

\usepackage[T1]{fontenc}
\usepackage{lmodern}
\usepackage{amsmath,amssymb,amsthm,mathtools,mathrsfs}
\usepackage{booktabs,tabularx,array}
\usepackage{microtype}
\usepackage{enumitem}
\usepackage{needspace}
\usepackage{placeins}
\usepackage[hidelinks]{hyperref}
\usepackage[nameinlink,noabbrev]{cleveref}
\hypersetup{
  pdftitle={Global regular solutions for a one-dimensional transverse-field
  non-resistive MHD system with far-field vacuum},
  pdfauthor={Anonymous Author(s)},
  pdfsubject={Global regular solutions for one-dimensional degenerate
  non-resistive magnetohydrodynamics with far-field vacuum},
  pdfkeywords={compressible magnetohydrodynamics; density-dependent viscosity;
  far-field vacuum; non-resistive flow; effective velocity; large data},
  bookmarksnumbered=true
}

\allowdisplaybreaks[1]
\numberwithin{equation}{section}
\biboptions{sort&compress}
\newtheoremstyle{plainjournal}
  {6pt}{6pt}{\itshape}{}{\bfseries}{.}{0.5em}{}
\newtheoremstyle{definitionjournal}
  {6pt}{6pt}{\normalfont}{}{\bfseries}{.}{0.5em}{}

\theoremstyle{plainjournal}
\newtheorem{theorem}{Theorem}[section]
\newtheorem{proposition}[theorem]{Proposition}
\newtheorem{lemma}[theorem]{Lemma}
\newtheorem{corollary}[theorem]{Corollary}
\theoremstyle{definitionjournal}
\newtheorem{definition}[theorem]{Definition}
\newtheorem{remark}[theorem]{Remark}

\newcommand{\R}{\mathbb R}
\newcommand{\dd}{\mathop{}\!\mathrm d}
\newcommand{\Dt}{D_t}
\newcommand{\cD}{\mathcal D}
\newcommand{\norm}[2][]{\lVert #2\rVert_{#1}}
\newcommand{\ip}[2]{\langle #1,#2\rangle}

\begin{document}
\begin{frontmatter}

\title{Global Regular Solutions for One-Dimensional 
Degenerate Compressible Non-Resistive MHD Equations with Large Data and Far-Field Vacuum}

\author[label4]{Zhaoyang Shang\corref{cor1}}
\address[label4]{School of Finance, Shanghai Lixin University of Accounting and
Finance, Shanghai 201209, P.R. China}
\cortext[cor1]{Corresponding author.}
\ead{shangzhaoyang@sjtu.edu.cn}
\date{}

\begin{abstract}
We consider the Cauchy problem on the real line for a one-dimensional
transverse-field isentropic non-resistive magnetohydrodynamic system with
viscosity $\mu(\rho)=\alpha\rho^\delta$, $0<\delta\leq1$, and vacuum at
spatial infinity.  The viscosity therefore vanishes precisely where the
density vanishes.  We establish two global large-data theorems, one for the
sublinear range $0<\delta<1$ and one for the linear case $\delta=1$, in a
nontrivial invariant class of magnetic
fields.  The class is described by the frozen-in flux $h=b/\rho$ and its mass
derivative $k=\rho^{-1}h_x$.  The key identity
$[\partial_t+u\partial_x,\rho^{-1}\partial_x]=0$ transports both $h$ and $k$,
removes all negative powers of the density from the Lorentz force, and
propagates the magnetic regularity.  A modified effective-velocity entropy
contains an additional coercive magnetic term, while its only error is
controlled by $k$.  Magnetic pressure also strengthens the density upper
bound.  These estimates close the large-data continuation argument in the
ranges $0<\delta<1$, $\gamma\geq\delta+\tfrac12$, and $\delta=1$,
$\gamma>\tfrac32$. As a result, we extend the work of Cao, Li, and Zhu (SIAM J. Math. Anal., 2022) from the Navier-Stokes equations to the MHD equations. %The constant ratio $b_0=c\rho_0$ is recovered as a strict special case.
\end{abstract}

\begin{keyword}
compressible magnetohydrodynamics \sep density-dependent viscosity \sep
far-field vacuum \sep non-resistive flow \sep
large data

\MSC[2020] 35Q35 \sep 35Q30 \sep 76W05 \sep 35B65
\end{keyword}

\end{frontmatter}

\section{Introduction}\label{sec:introduction}

We study a longitudinal flow carrying a transverse magnetic field,  the equations can be written as:
\begin{equation}\label{eq:system}
\left\{
\begin{aligned}
 &\rho_t+(\rho u)_x=0,\\
 &(\rho u)_t+\bigl(\rho u^2+A\rho^\gamma+\tfrac12|b|^2\bigr)_x
       =(\alpha\rho^\delta u_x)_x,\\
 &b_t+(ub)_x=0,
\end{aligned}\right.
\end{equation}
where $(t,x)\in(0,\infty)\times\R$, $\rho>0$ is the density, $u\in\R$ is the longitudinal velocity,
$b\in\R^m$ ($m=1$ or $2$) is the transverse magnetic field, and
$A,\alpha>0$, $\gamma>1$, $0<\delta\leq1$.  We impose
\begin{equation}\label{eq:initialdata}
 (\rho,u,b)|_{t=0}=(\rho_0,u_0,b_0),
\end{equation}
\begin{equation}\label{eq:farfield}
  (\rho,u,b)(t,x)\longrightarrow(0,0,0)
  \quad\text{as }|x|\to\infty.
\end{equation}
%and prescribe $(\rho,u,b)|_{t=0}=(\rho_0,u_0,b_0)$.

The system \eqref{eq:system} is a physically consistent invariant reduction
of planar ideal MHD: the longitudinal magnetic field is zero and the
transverse velocity is initially zero.  It is not the full planar
one-dimensional system with a nonzero longitudinal field.  The latter has an
Alfv\'en coupling with speed proportional to $\rho^{-1/2}$ and is not
covered by this paper.

Viscosities that vanish at vacuum make the momentum equation genuinely
degenerate.  The Bresch--Desjardins entropy structure
\cite{BreschDesjardins2003,BreschDesjardins2007} and its one-dimensional
effective-velocity form are central tools for such problems.  For
$\mu(\rho)=\alpha\rho^\delta$, Cao, Li and Zhu \cite{CaoLiZhu2022} proved
global regular solvability of the one-dimensional isentropic compressible
Navier--Stokes Cauchy problem for arbitrarily large data and far-field vacuum
when $0<\delta\leq1$.  Their work is the hydrodynamic point of departure for
the present paper.

Existing one-dimensional MHD results with vacuum concern different regimes.
Yu \cite{Yu2013} treated a zero-resistivity initial-boundary-value problem on
a bounded interval under an additional density--magnetic-field constraint.
Ye and Li \cite{YeLi2019} proved a Cauchy result with magnetic diffusion and
coercive viscosity.  Li, Wang and Ye \cite{LiWangYe2020} considered the
non-resistive Cauchy problem with far-field vacuum, but with a strictly
positive constant viscosity.  Guan and Wei \cite{GuanWei2025} recently
treated large data and initial vacuum on a bounded interval for
$\mu(\rho)=1+\rho^a$, which remains uniformly positive at vacuum.  Recent
multidimensional results with genuinely degenerate viscosity and far-field
vacuum are local in time; see Liu, Luo and Zhong \cite{LiuLuoZhong2025} and
Li, L\"u and Yuan \cite{LiLuYuan2026}.  To the best of our knowledge, none
of the cited results gives a global Cauchy theory combining far-field
vacuum, zero resistivity, and a viscosity that vanishes at vacuum for the
transverse-field invariant class considered here.

There are two immediate obstructions to transferring the hydrodynamic proof.
If
\begin{equation}\label{eq:vintro}
 v=u+\alpha\rho^{\delta-2}\rho_x,
\end{equation}
then
\begin{equation}\label{eq:vrawintro}
 \rho(v_t+uv_x)+(A\rho^\gamma+\tfrac12|b|^2)_x=0.
\end{equation}
The Lorentz acceleration $b\cdot b_x/\rho$ is apparently singular near
far-field vacuum.  Moreover, the effective-velocity entropy produces
\begin{equation}\label{eq:bdobstruction}
 \alpha\int_\R \rho^{\delta-2}\rho_x\,b\cdot b_x\,\dd x,
\end{equation}
which has no sign and is not controlled by the basic MHD energy.

Our resolution uses the frozen magnetic flux per unit mass and the mass
derivative
\begin{equation}\label{eq:hkintro}
 h=\frac b\rho,\qquad k=\cD h,\qquad \cD=\rho^{-1}\partial_x.
\end{equation}
Writing $\Dt=\partial_t+u\partial_x$, continuity and induction give
\begin{equation}\label{eq:commintro}
 \Dt h=0,\qquad [\Dt,\cD]=0,\qquad \Dt k=0.
\end{equation}
Consequently
\begin{equation}\label{eq:forceintro}
 \frac{b\cdot b_x}{\rho}
 =|h|^2\rho_x+\rho^2h\cdot k,
\end{equation}
which contains no negative power of $\rho$.  The density upper estimate is
also improved.  With
$\xi(t,x)=\int_{-\infty}^x\rho u(t,y)\,\dd y$, one has along particle paths
\begin{equation}\label{eq:densityintro}
 \Dt\left(\xi+\frac\alpha\delta\rho^\delta\right)
 =-A\rho^\gamma-\frac12|b|^2\leq0.
\end{equation}
Finally, the modified effective-velocity entropy has only one new error,
which is controlled by the transported $k$ and the density upper bound.

The magnetic data used below are genuinely nonconstant.  We assume that
$k_0$ belongs to $H^2$ and is compactly supported.  Compact support is used
only in the positive-density approximation at spatial infinity; it says that
$h_0$ is constant on each far tail, not that it is constant throughout the
fluid.  Removing this technical approximation hypothesis is an interesting
question, but it is not needed for the global mechanism.

For readers who are new to degenerate fluid equations, we summarize the
logic before beginning the estimates.  The proof has five layers.
\begin{enumerate}[label=\textup{(\arabic*)},leftmargin=2.2em]
\item The basic energy controls the total mass, kinetic energy, internal
energy, magnetic energy, and the weighted dissipation
$\rho^{\delta/2}u_x$.  It does not give a positive lower bound for the
density, and no such lower bound is used.
\item The integrated momentum equation gives a uniform upper bound for
$\rho$.  This upper bound is especially useful when $0<\delta<1$: since
$\delta-1<0$, it also gives a positive lower bound for the coefficient
$\rho^{\delta-1}$ appearing in the reformulated velocity equation.
\item The effective velocity $v$ cancels the differentiated viscous term.
The variables $h$ and $k$ turn the apparently singular Lorentz acceleration
into lower-order products of nonnegative powers of $\rho$.  Weighted velocity
moments then yield an integrable pointwise bound for $\rho^\iota u$, and the
ODE for $v$ gives $v\in L^\infty$.
\item Once $v$ is bounded, the identity
$\rho_x=\alpha^{-1}\rho^{2-\delta}(v-u)$ controls the first density
derivative.  The estimates are then obtained in the order
$u_t,u_{xx}$, followed by $\phi_{xx},\psi_x,u_{xxx}$, and finally the
time-weighted derivatives.  This order prevents the magnetic terms from
causing a derivative loss.
\item A positive-density approximation supplies a local solution with
bounds independent of the artificial density floor.  Compactness removes
the floor, and the finite-time estimates rule out breakdown at any finite
time.
\end{enumerate}

The paper is organized as follows.  We state the two main theorems in
\cref{sec:statement}.  The frozen mass-coordinate geometry is developed in
\cref{sec:geometry}.  Energy, density, and effective-velocity estimates are
proved in \cref{sec:basic,sec:effective}.  The high-order closure appears in
\cref{sec:higher}; local solvability and continuation are given in
\cref{sec:local}.  The two continuation proofs are completed in
\cref{sec:global}.  The positive-background construction and the full
difference argument are written out in \ref{app:local-construction}.

\section{Main results}\label{sec:statement}

We write $H^s=H^s(\R)$, $L^p=L^p(\R)$, and
$D^1=\{f\in L^1_{\mathrm{loc}}(\R):f_x\in L^2\}$, endowed with the
seminorm $|f|_{D^1}=\norm[L^2]{f_x}$.  Constants denoted by $C$
may change from line to line; $C_T$ may depend on $T$, the physical
parameters, and the stated initial norms, but not on a positive lower bound
for $\rho$.  Define
\begin{equation}\label{eq:psidef}
 \psi=\delta\rho^{\delta-2}\rho_x
 =
 \begin{cases}
 \displaystyle\frac{\delta}{\delta-1}(\rho^{\delta-1})_x,
     &0<\delta<1,\\[1ex]
 (\log\rho)_x,&\delta=1,
 \end{cases}
 \qquad
 \phi=\frac{A\gamma}{\gamma-1}\rho^{\gamma-1}.
\end{equation}

\begin{definition}[Regular MHD solution]\label{def:regular}
Let $T>0$.  A triple $(\rho,u,b)$ is a regular solution of
\eqref{eq:system}--\eqref{eq:farfield} on $[0,T]$ if the equations hold in
distributions, $\rho(t,x)>0$ at every finite $x$,
$\inf_{[0,T]\times\R}\rho=0$, and the following properties hold.

\smallskip
\noindent\emph{Sublinear regime $0<\delta<1$.}
\begin{align}
 &0<\rho^{\gamma-1}\in C([0,T];H^2),\qquad
   (\rho^{\delta-1})_x\in C([0,T];H^1),\label{eq:regdensity}\\
 &u\in C([0,T];H^2)\cap L^2(0,T;H^3),\nonumber\\
 &u_t\in C([0,T];L^2)\cap L^2(0,T;H^1),\nonumber\\
 &\rho^{(\delta-1)/2}u_x\in C([0,T];L^2),\qquad
   \rho^{(\delta-1)/2}u_{tx}\in L^2(0,T;L^2),\nonumber\\
 &\rho^{\delta-1}u_{xx}\in C([0,T];L^2)
       \cap L^2(0,T;H^1).\label{eq:regvelocity}
\end{align}
\smallskip
\noindent\emph{Linear regime $\delta=1$.}
In this case \eqref{eq:regdensity} is replaced by
\begin{equation}\label{eq:regdeltaone}
 0<\rho^{\gamma-1}\in C([0,T];H^2),\qquad
 (\log\rho)_x\in C([0,T];H^1),
\end{equation}
and, written without abbreviating the time interval,
\begin{equation}\label{eq:regvelocity-linear}
 u\in C([0,T];H^2)\cap L^2(0,T;H^3),\qquad
 u_t\in C([0,T];L^2)\cap L^2(0,T;H^1).
\end{equation}

\smallskip
\noindent\emph{Magnetic regularity common to both regimes.}
With $h=b/\rho$ and $k=\rho^{-1}h_x$,
\begin{align}
 h&\in L^\infty((0,T)\times\R),\qquad
 k\in C([0,T];H^2),\label{eq:regmagnetic}\\
 b&\in C([0,T];H^2\cap L^1),\qquad
 b_t\in C([0,T];H^1).\nonumber
\end{align}
\end{definition}

Thus, for example, $u\in C([0,T];H^2)$ means that the map
$t\mapsto u(t,\cdot)$ is continuous when $H^2(\R)$ is equipped with its
usual norm.  The condition $\inf\rho=0$ records far-field vacuum; it is not
in conflict with pointwise positivity, because the infimum is approached
only as $|x|\to\infty$.  The weighted quantities in
\eqref{eq:regvelocity} are precisely those that remain finite when the
viscosity degenerates at that vacuum.

We now state the two parameter regimes separately.  This is useful because
the quantities that must be controlled at vacuum are different.  When
$0<\delta<1$, the factors $\rho^{\delta-1}$ are singular as
$\rho\to0$, so weighted compatibility conditions are needed.  When
$\delta=1$, these weights are equal to one and the corresponding statement
is genuinely unweighted.

\begin{theorem}[Global regular solutions for $0<\delta<1$]
\label{thm:global-sublinear}
Let $A,\alpha>0$ and assume
\begin{equation}\label{eq:parameters-sublinear}
 0<\delta<1,\qquad \gamma>1,
 \qquad \gamma\geq\delta+\frac12.
\end{equation}
Suppose that the initial density and velocity satisfy
\begin{equation}\label{eq:datafluid-sublinear}
 0<\rho_0\in L^1(\R),\qquad
 \rho_0^{\gamma-1}\in H^2(\R),\qquad u_0\in H^2(\R),
\end{equation}
and that
\begin{equation}\label{eq:datadelta}
 (\rho_0^{\delta-1})_x\in H^1(\R),\qquad
 u_{0x}=\rho_0^{(1-\delta)/2}g_1,\qquad
 \alpha u_{0xx}=\rho_0^{1-\delta}g_2
\end{equation}
for some $g_1,g_2\in L^2(\R)$.  Write $b_0=\rho_0h_0$ and assume
\begin{equation}\label{eq:datamagnetic-sublinear}
 \begin{split}
 h_0&\in L^\infty(\R;\R^m)
       \cap W^{1,1}_{\mathrm{loc}}(\R;\R^m),\\
 k_0&:=\rho_0^{-1}(h_0)_x\in H^2(\R;\R^m),
 \qquad \operatorname{supp}k_0\Subset\R.
 \end{split}
\end{equation}
The identity $(h_0)_x=\rho_0k_0$ in
\eqref{eq:datamagnetic-sublinear} is understood distributionally.  Because
$h_0$ is locally absolutely continuous and $\rho_0$ is positive at every
finite point, the quotient defining $k_0$ is well defined almost
everywhere.

Then the Cauchy problem \eqref{eq:system}--\eqref{eq:farfield}, with
$(\rho,u,b)|_{t=0}=(\rho_0,u_0,b_0)$, has a unique global classical
solution on $(0,\infty)\times\R$.  For each finite $T>0$ this solution has
the sublinear regularity in \cref{def:regular} and satisfies
\begin{align}
 &\rho\in C([0,T];L^1),
 \qquad (\rho^{\gamma-1})_t\in C([0,T];H^1),
 \label{eq:extra-sublinear}\\
 &h(t,X(t;x))=h_0(x),\qquad
 k(t,X(t;x))=k_0(x),
 \label{eq:transport-sublinear}
\end{align}
where the particle path is determined by
\begin{equation}\label{eq:flow-sublinear}
 X_t(t;x)=u(t,X(t;x)),\qquad X(0;x)=x.
\end{equation}
Moreover,
\begin{align}
 &t^{1/2}\rho^{(\delta-1)/2}u_{tx}
       \in L^\infty(0,T;L^2),
 \label{eq:timeweighted-sublinear}\\
 &t^{1/2}u_{txx}\in L^2(0,T;L^2),\qquad
 t^{1/2}u_{tt}\in L^2(0,T;L^2).\nonumber
\end{align}
No smallness assumption is imposed on any of the norms above.
\end{theorem}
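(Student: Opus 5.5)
The plan is the standard continuation scheme of Cao--Li--Zhu, adapted through the frozen variables $h,k$. There are three pieces: local well-posedness in the regular class of \cref{def:regular}, a priori bounds on $[0,T]$ depending only on $T$ and the initial norms, and a continuation argument.

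\emph{Local solvability.} We approximate by a positive background. Replace $\rho_0$ by $\rho_0+\eta$, and keep $h_0$ and $k_0$ fixed. Because $\operatorname{supp}k_0$ is compact, $h_0$ is constant on each tail, so $b_0^\eta=(\rho_0+\eta)h_0$ tends to a constant vector at $\pm\infty$. For this nondegenerate problem we solve by a standard linearization and iteration. We then take $\eta\to0$ using bounds that do not depend on $\eta$; these are the same bounds as in the a priori step below. Uniqueness follows from an $L^2$ difference argument in the weighted variables.

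\emph{A priori bounds.} We work directly with the system for $(\rho,u,h,k)$, using $\Dt h=\Dt k=0$ and the commutator identity $[\Dt,\cD]=0$. The steps, in order, are as follows.

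\begin{enumerate}[label=\textup{(\roman*)},leftmargin=2.2em]
\item Transport gives $\|h\|_{L^\infty}\le\|h_0\|_{L^\infty}$ and $\|k\|_{L^\infty}\le\|k_0\|_{L^\infty}$. The basic energy identity controls $\int(\tfrac12\rho u^2+\tfrac{A}{\gamma-1}\rho^\gamma+\tfrac12|b|^2)$ together with $\int_0^T\!\int\alpha\rho^\delta u_x^2$.
\item Identity \eqref{eq:densityintro}, evaluated along particle paths, yields $\rho\le C_T$, because $|\xi|$ is bounded by mass and energy.
\item Let $v$ be as in \eqref{eq:vintro}. By \eqref{eq:forceintro}, the Lorentz term in \eqref{eq:vrawintro} becomes $|h|^2\rho\rho_x+\rho^3h\cdot k$ after multiplying through. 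Testing against $\rho v$ gives the BD-type entropy, whose magnetic part contributes the good term $\alpha\int\rho^{\delta-1}|h|^2\rho_x^2\ge0$. The only remaining error is $\int\rho^{\delta+1}\rho_x h\cdot k$. Since $\rho\le C_T$ and $k$ is bounded, this error is absorbed by Cauchy--Schwarz into the entropy and the mass.
\item Weighted moments $\int\rho|u|^{2n}$ give an integrable-in-time bound on $\|\rho^\iota u\|_{L^\infty}$. The transport ODE for $v$, $\Dt v=-\rho^{-1}(A\rho^\gamma)_x-|h|^2\rho_x-\rho^2h\cdot k$, can be rewritten as a linear damped equation, using $\rho_x=\alpha^{-1}\rho^{2-\delta}(v-u)$. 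This gives $v\in L^\infty$. The damping term $|h|^2\alpha^{-1}\rho^{2-\delta}$ has a favourable sign. The condition $\gamma\ge\delta+\tfrac12$ is what makes the term $\rho^{\gamma-\delta}u$ controllable by the moment bound, exactly as in the Navier--Stokes case.
\end{enumerate}

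\emph{Higher-order bounds.} With $v$ bounded we have $\psi\in L^\infty_tL^2$. We then estimate, in this order, $u_t$ and $u_{xx}$ (with the weights $\rho^{(\delta-1)/2}$ and $\rho^{\delta-1}$), next $\phi_{xx}$, $\psi_x$ and $u_{xxx}$, and finally the time-weighted quantities in \eqref{eq:timeweighted-sublinear}. At each stage the magnetic forcing is $(|h|^2\rho^2/2)_x$-type: terms of the form $\rho\rho_x|h|^2+\rho^3h\cdot k$. Its derivatives involve $k_x$ and $k_{xx}$, which are transported and remain in $L^2$ because $\Dt(\cD^j k)=0$. We use $k_x=\rho\,\cD k$ wherever a factor $\rho$ is needed, so the magnetic terms never cost a derivative relative to the fluid terms.

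\emph{Main obstacle.} I expect the hardest step to be (iii)--(iv): keeping the entropy error and the $v$ ODE uniform as $\rho\to0$ at infinity, since no lower bound on $\rho$ is available. My first attempt is to bound the error by $C\|k\|_{L^\infty}\int\rho^{\delta+1}|\rho_x|\le \varepsilon\int\rho^{\delta-1}\rho_x^2\cdot\rho^{\dots}+C$, with the exponents matched to the entropy weight $\rho^{2\delta-3}\rho_x^2\cdot\rho$. If this matching fails, I will instead use the coercive magnetic term to absorb the error.

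\emph{Continuation.} Given the a priori bounds, let $T^*$ be the maximal time of existence. If $T^*<\infty$, the uniform bounds let us restart the local theorem at a time close to $T^*$, which contradicts maximality. Hence the solution is global. The transport identities \eqref{eq:transport-sublinear} follow by integrating $\Dt h=\Dt k=0$ along the flow \eqref{eq:flow-sublinear}.
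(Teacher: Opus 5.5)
\paragraph{Verdict.} Your a priori chain is the paper's, and it is correct; the genuine gap is in the local, positive-background step, where the uniform-in-$\eta$ bounds you invoke are not available.

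\paragraph{What matches the paper.} The a priori chain has the same steps as the paper: transport of $h,k$, energy, the upper bound from $D_t(\xi+\frac\alpha\delta\rho^\delta)\le0$, the magnetic BD entropy, moments, and the damped ODE for $v$. Your first idea for the entropy error also closes. Since $\rho^{\delta+1}\rho_x=\alpha^{-1}\rho^3(v-u)$, the error is at most $C\int\rho(|v|^2+u^2)\,dx+C\int\rho^5\,dx$, and Gronwall applies. This drops the coercive magnetic term, which the paper uses instead through Young's inequality.

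\paragraph{The gap: the local step.} With $\rho_0^\eta=\rho_0+\eta$, the approximate data have $\int\rho_0^\eta\,dx=\infty$ and $\int(\rho_0^\eta)^\gamma\,dx=\infty$. Also $\int|b_0^\eta|^2\,dx=\infty$ whenever $h_0$ has a nonzero tail, because $b_0^\eta\to\eta c_\pm$. Every estimate in your a priori step uses $M_0<\infty$ and $E(0)<\infty$:
\begin{itemize}
\item the density bound via $|\xi|\le M_0^{1/2}\|\sqrt\rho\,u\|_{L^2}$;
\item the moments;
\item the BD entropy;
\item the Gagliardo--Nirenberg bound on $\rho^{1/2}u$.
\end{itemize}
So ``the same bounds as in the a priori step'' do not exist for the $\eta$-problem. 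Your sketch therefore gives neither an $\eta$-independent lifespan nor compactness.

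\paragraph{How the paper does it.} The paper closes a purely local inequality $\frac{d}{dt}\mathscr Y_\eta+\mathscr Z_\eta\le C(1+\mathscr Y_\eta)^N$. Here $\mathscr Y_\eta$ contains $\|\phi-\eta\|_{H^2}$, $\|\psi\|_{H^1}$, the weighted compatibility norms, $\|k\|_{H^2}$, and $\|h-\bar h\|_{H^3}$ in a two-tail affine space. The upper bound on $\rho$ comes from $\phi$. The force is written as $\mathcal M=\delta^{-1}|h|^2\rho^{2-\delta}\psi+\rho^2h\cdot k$, so only nonnegative powers of $\rho$ appear. ODE comparison gives a common time $T_*$, and local Aubin--Lions passes to the limit.

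\paragraph{The magnetic approximation.} You cannot keep both $h_0$ and $k_0$ fixed, because $(h_0)_x=\rho_0k_0\neq\rho_0^\eta k_0$. The constraint $h_x=\rho k$ forces $k_0^\eta=\rho_0k_0/\rho_0^\eta$. The paper uses compactness of $\operatorname{supp}k_0$, where $\rho_0$ has a positive minimum, exactly to get a uniform $H^2$ bound for this $k_0^\eta$. You use compact support only to freeze the tails of $h_0$.

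\paragraph{Ordering in the high-order part.} Two points need repair.
\begin{itemize}
\item $v\in L^\infty$ does not give $\psi\in L^\infty_tL^2$. We have $\psi=\frac\delta\alpha(v-u)$, and at that stage only $\sqrt\rho\,v$ and $\sqrt\rho\,u$ are in $L^2$. The paper first proves the weighted first-order velocity bound, using only $v\in L^\infty$ and $\rho^{\delta-1}\ge\|\rho\|_{L^\infty}^{\delta-1}$. This gives $u\in L^\infty_tL^2$ and $\int_0^T\|u_x\|_{L^\infty}\,dt<\infty$. Only then does an $L^2$ Gronwall estimate for $v$ give $\psi\in L^2\cap L^4$.
\item In the $u_t$ estimate, $\mathcal M_t$ contains $|h|^2\rho_{xt}$, hence $\rho_{xx}$, before $\psi_x$ is known. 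You must use $\mathcal M=(\rho|h|^2)_x-\rho^2h\cdot k$, so that $\int\mathcal M_tu_t\,dx=-\int(\rho|h|^2)_tu_{tx}\,dx-\int(\rho^2h\cdot k)_tu_t\,dx$. The bounded dual weight $\rho^{(1-\delta)/2}$ goes on the first factor. $\mathcal M_t$ can be estimated directly only in the time-weighted step.
\end{itemize}

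\paragraph{Your route to $k\in H^2$.} Using the transported $\mathcal Dk$ and $\mathcal D^2k$, with $k_{xx}=\rho_x\mathcal Dk+\rho^2\mathcal D^2k$, is a valid alternative to the paper's energy estimate for $k$. Transport does not preserve $L^2$ norms, however. You need the Jacobian $X_a=\rho_0/\rho(t,X)$ controlled on the compact support, which again requires $\int_0^T\|u_x\|_{L^\infty}\,dt<\infty$.
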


\begin{theorem}[Global regular solutions for $\delta=1$]
\label{thm:global-linear}
Let $A,\alpha>0$ and assume
\begin{equation}\label{eq:parameters-linear}
 \delta=1,\qquad \gamma>\frac32.
\end{equation}
Suppose that the initial density and velocity satisfy
\begin{equation}\label{eq:datafluid-linear}
 0<\rho_0\in L^1(\R),\qquad
 \rho_0^{\gamma-1}\in H^2(\R),\qquad u_0\in H^2(\R),
 \qquad (\log\rho_0)_x\in H^1(\R).
\end{equation}
Write $b_0=\rho_0h_0$ and assume
\begin{equation}\label{eq:datamagnetic-linear}
 \begin{split}
 h_0&\in L^\infty(\R;\R^m)
       \cap W^{1,1}_{\mathrm{loc}}(\R;\R^m),\\
 k_0&:=\rho_0^{-1}(h_0)_x\in H^2(\R;\R^m),
 \qquad \operatorname{supp}k_0\Subset\R.
 \end{split}
\end{equation}
Again, $(h_0)_x=\rho_0k_0$ is understood in the distributional sense.

Then the Cauchy problem \eqref{eq:system}--\eqref{eq:farfield}, with
$(\rho,u,b)|_{t=0}=(\rho_0,u_0,b_0)$, has a unique global classical
solution on $(0,\infty)\times\R$.  For each finite $T>0$ this solution has
the $\delta=1$ regularity in \cref{def:regular} and satisfies
\begin{align}
 &\rho\in C([0,T];L^1),
 \qquad (\rho^{\gamma-1})_t\in C([0,T];H^1),
 \label{eq:extra-linear}\\
 &h(t,X(t;x))=h_0(x),\qquad
 k(t,X(t;x))=k_0(x),
 \label{eq:transport-linear}
\end{align}
where
\begin{equation}\label{eq:flow-linear}
 X_t(t;x)=u(t,X(t;x)),\qquad X(0;x)=x.
\end{equation}
The time-weighted estimates are now completely unweighted:
\begin{align}
 &t^{1/2}u_{tx}\in L^\infty(0,T;L^2),
 \label{eq:timeweighted-linear}\\
 &t^{1/2}u_{txx}\in L^2(0,T;L^2),\qquad
 t^{1/2}u_{tt}\in L^2(0,T;L^2).\nonumber
\end{align}
No smallness assumption is imposed on the data.
\end{theorem}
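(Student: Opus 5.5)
The proof of \cref{thm:global-linear} follows the five-layer scheme of the introduction, specialized to $\delta=1$, where all vacuum weights $\rho^{\delta-1}$ equal one.

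\textbf{Step 1: local solution and continuation.} We first construct a local regular solution by the positive-density approximation. We replace $\rho_0$ by $\rho_0+\epsilon$ on compact windows; this is where $\operatorname{supp}k_0\Subset\R$ is used, since $h_0$ is constant on the tails. We solve the nondegenerate problem and derive bounds independent of $\epsilon$. Passing to the limit by compactness gives a local solution, and uniqueness follows from a difference estimate. The global statement then reduces to a continuation criterion: if $T^*<\infty$ is maximal, a suitable norm of $(\rho^{\gamma-1},(\log\rho)_x,u,k)$ must blow up. We therefore only need a priori bounds on $[0,T]$ depending on $T$ and the data.

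\textbf{Step 2: transport, energy and density bounds.} The commutator identity $[\Dt,\cD]=0$ gives \eqref{eq:transport-linear}. Hence $\|h\|_{L^\infty}\le\|h_0\|_{L^\infty}$ and $\|k\|_{L^\infty}\le\|k_0\|_{L^\infty}$. The $H^2$ norm of $k$ is propagated by differentiating $\Dt k=0$; each $\partial_x$ produces factors of $u_x$ and $\rho_x$, which are controlled later in Step 4.

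Next we apply the basic energy identity to control
\[
\int\bigl(\rho u^2+\rho^\gamma+|b|^2\bigr)\,\dd x+\int_0^t\!\!\int\rho u_x^2\,\dd x\,\dd s.
\]
Then \eqref{eq:densityintro} with $\delta=1$ gives $\alpha\rho\le\alpha\rho_0+|\xi_0|+|\xi|$ along particle paths. Since $|\xi|\le(\int\rho)^{1/2}(\int\rho u^2)^{1/2}$, this yields $\rho\le C$.

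\textbf{Step 3: effective velocity.} Here $v=u+\alpha(\log\rho)_x$. Using \eqref{eq:forceintro}, the equation for $v$ reads
\[
\Dt v+\frac{A\gamma}{\alpha}\rho^{\gamma-1}(v-u)+|h|^2(v-u)+\rho h\cdot k=0 .
\]
The damping coefficient $A\gamma\alpha^{-1}\rho^{\gamma-1}+|h|^2$ is nonnegative. So along characteristics $|v|\le|v_0|+\int_0^t(C|u|+C)\,\dd s$, and $v\in L^\infty$ reduces to $\int_0^T\|u\|_{L^\infty}\,\dd t<\infty$.

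In the Navier–Stokes case this bound on $u$ comes from the BD entropy together with higher moments $\int\rho|u|^{2n}\,\dd x$. Here the BD entropy gains the coercive term $\int|h|^2\rho|\psi|^2$ and the single error $\alpha\int\rho^2 h\cdot k\,\psi\,\dd x$. We bound that error by Cauchy–Schwarz against $\int\rho^{\gamma-2}\rho_x^2$-type dissipation, using $\rho\le C$, $|k|\le C$ and mass conservation. A Grönwall argument then gives $(\log\rho)_x$-weighted bounds.

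\textbf{Step 4: main obstacle and higher-order closure.} The main obstacle is obtaining $\int_0^T\|u\|_{L^\infty}\,\dd t$ for $\delta=1$ without a density lower bound. This is where $\gamma>3/2$ enters. We first try the route of Cao–Li–Zhu: bound $\|\rho^\iota u\|_{L^\infty}$ through $\|\rho^{2\iota}u u_x\|_{L^1}$ and moment estimates, then combine with the $u_{xx}$ elliptic estimate from the momentum equation written as
\[
\alpha u_{xx}=\rho(u_t+uu_x)+\rho_x\bigl(\tfrac{A\gamma}{\gamma-1}\cdots\bigr)-\alpha(\log\rho)_x\rho u_x/\rho+\dots
\]
Here the magnetic pressure gradient equals $\rho^2(|h|^2\rho_x/\rho+h\cdot k)$, which is harmless.

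Once $v$ is bounded, $(\log\rho)_x=\alpha^{-1}(v-u)$ is in $L^\infty$, and we run the hierarchy in order: $u_t,u_{xx}$, then $\phi_{xx},\psi_x,u_{xxx}$, and $k$ in $H^2$. Finally, the $t$-weighted estimates for $u_{tx},u_{txx},u_{tt}$ come from differentiating the momentum equation in $t$ and multiplying by $t u_t$ and $t u_{tt}$. These bounds contradict blow-up at a finite $T^*$, so the solution is global.
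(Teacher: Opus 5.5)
\emph{There is a genuine gap at the $L^\infty$ bound for the effective velocity $v$.} Your Step 3 reduces this bound to the unweighted quantity $\int_0^T\|u\|_{L^\infty}\,\dd t$, and nothing in Step 4 produces it.

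\emph{Why the unweighted bound is out of reach.} Before $v$ is bounded, every available estimate degenerates at far-field vacuum: the basic energy, the BD entropy, and the moments $\int\rho|u|^{p+2}$. So $u$ itself is not controlled pointwise. Your proposed remedy is to combine a bound on $\rho^\iota u$ with the elliptic estimate for $u_{xx}$. This is circular. For $\delta=1$ the velocity equation contains $\alpha\psi u_x$ with $\psi=(\log\rho)_x=\alpha^{-1}(v-u)$. Unweighted control of $\psi$ is exactly what $v\in L^\infty$ is supposed to deliver. In particular, you assert that $\gamma>\tfrac32$ enters here but never show how.

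\emph{The missing observation: no unweighted bound is needed.} First, the effective-velocity equation is miscomputed. Using $\rho_x=\alpha^{-1}\rho(v-u)$ and $\mathcal M=|h|^2\rho_x+\rho^2h\cdot k$, it reads
\[
 \Dt v+\Bigl(\frac{A\gamma}{\alpha}\rho^{\gamma-1}+\frac{|h|^2}{\alpha}\rho\Bigr)(v-u)=-\rho^2h\cdot k .
\]
The magnetic damping carries a factor $\rho$ that your coefficient $|h|^2$ loses. You then bounded the forcing crudely by $C|u|$, throwing away every density factor.

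Keeping them, the forcing along characteristics satisfies
\[
 a|u|\le C\bigl(\rho^{\gamma-1-\iota}+\rho^{1-\iota}\bigr)|\rho^\iota u| .
\]
Choose $\tfrac12<\iota<\min\{1,\gamma-1\}$. Such $\iota$ exists exactly when $\gamma>\tfrac32$, and it makes both powers nonnegative. With the density upper bound, it then suffices to have $\int_0^T\|\rho^\iota u\|_{L^\infty}\,\dd t\le C_T$.

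\emph{How the weighted bound is obtained.} Apply Gagliardo--Nirenberg to $F=\rho^\iota u$ in $L^2$ and $L^{1/\iota}$, writing
\[
 F_x=\iota(\rho^{-1/2}\rho_x)(\rho^{\iota-1/2}u)+\rho^{\iota-1/2}(\rho^{1/2}u_x).
\]
The factors are controlled by:
\begin{itemize}
\item the BD bound on $\rho^{-1/2}\rho_x$;
\item the moment $\int\rho|u|^{2/(2\iota-1)}$;
\item mass conservation;
\item the energy dissipation.
\end{itemize}

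\emph{Two smaller points.}
\begin{itemize}
\item Your BD coercive term and error term are each off by a factor $\rho$. They should be $\alpha\int|h|^2\rho_x^2$ and $-\alpha\int\rho^2(h\cdot k)\rho_x$. This is harmless for your Gronwall argument.
\item In the $u_t$ estimate, the pairing $\int\mathcal M_tu_t$ contains $\rho_{xx}$ before $\psi_x$ is known. The paper avoids this derivative loss by writing $\mathcal M=(\rho|h|^2)_x-\rho^2h\cdot k$ and integrating the first part by parts. Your ordering ``$u_t,u_{xx}$, then $\psi_x$'' needs this, or else a coupled Gronwall argument with $\psi_x$.
\end{itemize}
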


\begin{remark}[Nonconstant admissible fields]\label{rem:nonempty}
Let $k_0\in C_c^\infty(\R;\R^m)$ and $c\in\R^m$, and define
\begin{equation}\label{eq:exampleh}
 h_0(x)=c+\int_{-\infty}^x\rho_0(y)k_0(y)\,\dd y,\qquad
 b_0=\rho_0h_0.
\end{equation}
Then the magnetic assumptions in both
\cref{thm:global-sublinear,thm:global-linear} hold.  Whenever
$k_0\not\equiv0$, the ratio $b_0/\rho_0$ is nonconstant, so neither main
theorem is merely a barotropic reduction.
\end{remark}

\begin{corollary}[Constant flux-to-mass ratio]\label{cor:constant}
Assume the fluid hypotheses of \cref{thm:global-sublinear} or
\cref{thm:global-linear}, and let $b_0=c\rho_0$ for a
constant $c\in\R^m$.  Then $b=c\rho$ for all time and the MHD system reduces
to the degenerate Navier--Stokes system with compound pressure
\begin{equation}\label{eq:compoundpressure}
 P_{\mathrm{eff}}(\rho)=A\rho^\gamma+\tfrac12|c|^2\rho^2.
\end{equation}
If $0<\delta<1$, the conclusion of \cref{thm:global-sublinear} holds; if
$\delta=1$, the conclusion of \cref{thm:global-linear} holds.
\end{corollary}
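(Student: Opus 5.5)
The plan is to treat the corollary as the special case $k_0\equiv0$ of the main theorems, and then read off the reduction from the transport identity. First we check the magnetic hypotheses for $b_0=c\rho_0$. Here $h_0=b_0/\rho_0\equiv c$, which is constant, hence lies in $L^\infty(\R;\R^m)\cap W^{1,1}_{\mathrm{loc}}(\R;\R^m)$. Moreover $(h_0)_x=0$, so $k_0=\rho_0^{-1}(h_0)_x\equiv0$. This $k_0$ belongs to $H^2(\R;\R^m)$ and has empty, hence compact, support. Together with the assumed fluid hypotheses, this means \eqref{eq:datamagnetic-sublinear} or \eqref{eq:datamagnetic-linear} holds. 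Applying \cref{thm:global-sublinear} (if $0<\delta<1$) or \cref{thm:global-linear} (if $\delta=1$) then gives a unique global solution with all the stated properties.

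Next we identify $b$. By \eqref{eq:transport-sublinear} (respectively \eqref{eq:transport-linear}), $h(t,X(t;x))=h_0(x)=c$ for every $x$. The key point is that $x\mapsto X(t;x)$ is onto $\R$ for each $t$. This holds because $u\in C([0,T];H^2)\subset L^\infty(0,T;W^{1,\infty})$, so the flow \eqref{eq:flow-sublinear} is a bi-Lipschitz homeomorphism of $\R$. Hence $h\equiv c$ everywhere, that is, $b=\rho h=c\rho$ for all $t\geq0$.

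Finally, substituting $b=c\rho$ gives $\tfrac12|b|^2=\tfrac12|c|^2\rho^2$. The momentum equation in \eqref{eq:system} then becomes the degenerate Navier--Stokes momentum equation with pressure $P_{\mathrm{eff}}$ from \eqref{eq:compoundpressure}. The induction equation reduces to $c$ times the continuity equation, so it is automatically satisfied.

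The only step requiring care is the surjectivity of the flow map, which is needed to pass from the Lagrangian identity to the Eulerian statement $h\equiv c$. If one prefers to avoid it, an alternative is available. Since $\Dt h=0$ in the Eulerian sense, with $h\in L^\infty$ and $u$ Lipschitz, the function $h-c$ solves a linear transport equation with zero data. Uniqueness for such equations with a Lipschitz field then yields $h\equiv c$. Everything else is a direct citation of the main theorems.
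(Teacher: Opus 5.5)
Your proposal is correct and matches the paper's proof. With $h_0\equiv c$ and $k_0\equiv0$ the magnetic hypotheses hold trivially, transport of $h$ (the paper cites \cref{lem:commute}) gives $h\equiv c$ and hence $b=c\rho$, substituting this into the momentum equation yields the compound pressure, and the main theorems supply the remaining conclusions. Your check that the flow map is a bi-Lipschitz homeomorphism of $\R$ (via $H^2\hookrightarrow W^{1,\infty}$) fills in a step the paper leaves implicit when it passes from the characteristic identity to $h\equiv c$ everywhere.
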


\section{Frozen flux and mass-coordinate derivatives}\label{sec:geometry}

The identities in this section are first justified for smooth positive
solutions and then pass to regular solutions by approximation.

\begin{lemma}[Commuting material and mass derivatives]\label{lem:commute}
Let $\Dt=\partial_t+u\partial_x$ and $\cD=\rho^{-1}\partial_x$.  If
$\rho_t+(\rho u)_x=0$, then
\begin{equation}\label{eq:commutator}
 [\Dt,\cD]f=0
\end{equation}
for every smooth scalar or vector function $f$.  If also
$b_t+(ub)_x=0$, then
\begin{equation}\label{eq:transporthk}
 \Dt h=0,\qquad \Dt k=0,\qquad h=b/\rho,\quad k=\cD h.
\end{equation}
Consequently,
\begin{equation}\label{eq:hkmax}
 \norm[L^\infty]{h(t)}=\norm[L^\infty]{h_0},\qquad
 \norm[L^\infty]{k(t)}=\norm[L^\infty]{k_0}.
\end{equation}
\end{lemma}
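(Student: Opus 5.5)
The plan is a direct computation of the commutator, followed by the two transport identities, and finally the maximum principle along particle paths. For the commutator, write $\Dt\cD f=\partial_t(\rho^{-1}f_x)+u\partial_x(\rho^{-1}f_x)$ and expand:
\[
 \Dt\cD f=-\rho^{-2}(\Dt\rho)f_x+\rho^{-1}\bigl(f_{xt}+uf_{xx}\bigr).
\]
On the other hand,
\[
 \cD\Dt f=\rho^{-1}\partial_x(f_t+uf_x)=\rho^{-1}\bigl(f_{xt}+uf_{xx}\bigr)+\rho^{-1}u_xf_x .
\]
Subtracting gives $[\Dt,\cD]f=-\rho^{-2}f_x\bigl(\Dt\rho+\rho u_x\bigr)$. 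This vanishes exactly because the continuity equation reads $\Dt\rho+\rho u_x=0$. The computation is componentwise, so it applies verbatim to vector-valued $f$.

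Next I would prove $\Dt h=0$. Expanding the induction equation gives $\Dt b+bu_x=0$, and the continuity equation gives $\Dt\rho=-\rho u_x$. Hence
\[
 \Dt(b/\rho)=\rho^{-1}\Dt b-b\rho^{-2}\Dt\rho=\rho^{-1}(-bu_x)+\rho^{-1}bu_x=0 .
\]
Then $\Dt k=\Dt\cD h=\cD\Dt h+[\Dt,\cD]h=0$ by the first part. Everything here is for smooth positive solutions, as announced at the start of the section, so division by $\rho$ is legitimate pointwise.

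For the $L^\infty$ identities I would use the flow map $X(t;x)$ from \eqref{eq:flow-sublinear}, which is well defined and, for each $t$, a bijection of $\R$ because $u$ is Lipschitz in $x$ (e.g.\ $u\in H^2\subset W^{1,\infty}$) on $[0,T]$. Along each path, $\frac{d}{dt}h(t,X(t;x))=(\Dt h)(t,X)=0$, so $h(t,X(t;x))=h_0(x)$, and likewise $k(t,X(t;x))=k_0(x)$. Since $x\mapsto X(t;x)$ is a homeomorphism of $\R$, taking suprema yields $\norm[L^\infty]{h(t)}=\norm[L^\infty]{h_0}$ and $\norm[L^\infty]{k(t)}=\norm[L^\infty]{k_0}$.

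The only real obstacle is bijectivity of the flow map, and it is mild. Global existence of the ODE for each $x$ follows from the linear growth bound $|u|\le\norm[L^\infty]{u}$. Injectivity follows from uniqueness for Lipschitz fields. Surjectivity follows by solving backward from any endpoint. The passage from smooth to regular solutions is deferred by the section's convention, so no further argument is needed here.
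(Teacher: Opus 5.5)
Your proposal is correct and follows essentially the same route as the paper. Both arguments compute the commutator by the product rule (the paper phrases it via $\Dt(\rho^{-1})=\rho^{-1}u_x$ and $\Dt f_x=(\Dt f)_x-u_xf_x$), use the quotient-rule cancellation for $\Dt h$, obtain $\Dt k=\cD(\Dt h)=0$, and take suprema of the values carried along particle paths. Your explicit check that $X(t;\cdot)$ is a bijection of $\R$ is a slightly fuller version of the paper's appeal to the positive Jacobian $X_a=\exp\bigl(\int_0^t u_x\,\dd s\bigr)>0$.
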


\begin{proof}
Expanding the divergence in the continuity equation gives
\begin{equation}\label{eq:materialrho}
 \Dt\rho=\rho_t+u\rho_x=-\rho u_x,
 \qquad \Dt(\rho^{-1})=\rho^{-1}u_x.
\end{equation}
The derivative $\partial_x$ does not commute with $\Dt$.  Indeed, the
ordinary product rule gives
\begin{equation}\label{eq:materialfx}
 \Dt f_x=f_{xt}+uf_{xx}=(\Dt f)_x-u_xf_x.
\end{equation}
Combining \eqref{eq:materialrho} and \eqref{eq:materialfx}, we obtain
\begin{align*}
 \Dt(\rho^{-1}f_x)
 &=\Dt(\rho^{-1})f_x+\rho^{-1}\Dt f_x\\
 &=\rho^{-1}u_xf_x
   +\rho^{-1}\bigl((\Dt f)_x-u_xf_x\bigr)
 =\rho^{-1}(\Dt f)_x=\cD(\Dt f).
\end{align*}
This is exactly the commutator identity \eqref{eq:commutator}.

The induction equation can be written
\begin{equation}\label{eq:materialb}
 \Dt b=-bu_x.
\end{equation}
Since $h=b/\rho$, another application of the quotient rule gives the
cancellation
\[
 \Dt h=\frac{\Dt b}{\rho}-\frac b{\rho^2}\Dt\rho
 =-\frac b\rho u_x+\frac b\rho u_x=0.
\]
Because $k=\cD h$, the already proved commutator yields
$\Dt k=\cD(\Dt h)=0$.

For completeness, let $X$ be the particle map
$X_t(t;a)=u(t,X(t;a))$, $X(0;a)=a$.  Differentiating it with respect to
$a$ gives
\begin{equation}\label{eq:flowjacobian}
 X_a(t;a)=\exp\left(\int_0^t
 u_x(s,X(s;a))\,\dd s\right)>0.
\end{equation}
The two transport equations therefore imply
\begin{equation}\label{eq:hkcharacteristics}
 h(t,X(t;a))=h_0(a),\qquad k(t,X(t;a))=k_0(a).
\end{equation}
Taking suprema proves \eqref{eq:hkmax}.  Notice that this argument preserves
the $L^\infty$ norms, but not the $H^2$ norm of $k$; deformation of the
flow is accounted for separately in \cref{lem:kH2}.
\end{proof}

\begin{lemma}[Propagation of the magnetic profile]\label{lem:kH2}
For every $T>0$,
\begin{equation}\label{eq:kH2bound}
 \sup_{0\leq t\leq T}\norm[H^2]{k(t)}
 \leq \norm[H^2]{k_0}
 \exp\left\{C\int_0^T
 \bigl(\norm[L^\infty]{u_x}+\norm[L^2]{u_{xx}}\bigr)\,\dd t\right\}.
\end{equation}
Moreover, $\operatorname{supp}k(t)$ is the image of
$\operatorname{supp}k_0$ under the flow.
\end{lemma}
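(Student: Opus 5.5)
We work in Eulerian coordinates and differentiate the transport equation $\Dt k=0$, i.e. $k_t+uk_x=0$, in $x$. Each spatial derivative of the transport term produces commutator terms involving derivatives of $u$. We then close an $L^2$ energy estimate by Gr\"onwall. We do this for smooth positive approximate solutions; the bound passes to regular solutions by the approximation described at the start of \cref{sec:geometry}.

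\textbf{Zeroth order.} We multiply $k_t+uk_x=0$ by $k$ and integrate. Integrating by parts gives
\[
\frac{\dd}{\dd t}\norm[L^2]{k}^2=\int_\R u_x|k|^2\,\dd x\leq\norm[L^\infty]{u_x}\norm[L^2]{k}^2 .
\]

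\textbf{First order.} Differentiating once gives
\[
\partial_t k_x+uk_{xx}+u_xk_x=0,
\]
and hence
\[
\frac12\frac{\dd}{\dd t}\norm[L^2]{k_x}^2=-\frac12\int_\R u_x|k_x|^2\,\dd x,
\]
which is again bounded by $\norm[L^\infty]{u_x}\norm[L^2]{k_x}^2$.

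\textbf{Second order.} Differentiating twice gives
\[
\partial_t k_{xx}+uk_{xxx}+2u_xk_{xx}+u_{xx}k_x=0 .
\]
The terms $uk_{xxx}$ and $2u_xk_{xx}$ contribute at most $C\norm[L^\infty]{u_x}\norm[L^2]{k_{xx}}^2$. The main obstacle is the term $u_{xx}k_x$, because we only want $\norm[L^2]{u_{xx}}$ and not $\norm[L^\infty]{u_{xx}}$ to appear. We use the one-dimensional Sobolev inequality $\norm[L^\infty]{k_x}\leq C\norm[H^1]{k_x}\leq C\norm[H^2]{k}$. This gives
\[
\Bigl|\int_\R u_{xx}k_x\cdot k_{xx}\,\dd x\Bigr|\leq\norm[L^2]{u_{xx}}\norm[L^\infty]{k_x}\norm[L^2]{k_{xx}}\leq C\norm[L^2]{u_{xx}}\norm[H^2]{k}^2 .
\]

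\textbf{Gr\"onwall.} Summing the three orders yields
\[
\frac{\dd}{\dd t}\norm[H^2]{k}^2\leq C\bigl(\norm[L^\infty]{u_x}+\norm[L^2]{u_{xx}}\bigr)\norm[H^2]{k}^2 .
\]
Gr\"onwall's inequality then gives \eqref{eq:kH2bound}, after absorbing the factor $\tfrac12$ from taking square roots into $C$.

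\textbf{Justifying the integrations by parts.} The boundary terms must vanish. For smooth approximate solutions whose $k$ is compactly supported, this is automatic. Compact support is preserved by the second claim of the lemma, proved below. Alternatively, for general $H^3$ approximations one can use $u\in H^2$, so that $u$ is bounded, and argue by density.

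\textbf{Support.} The support statement follows from \eqref{eq:hkcharacteristics} in \cref{lem:commute}: $k(t,X(t;a))=k_0(a)$. By \eqref{eq:flowjacobian}, $X(t;\cdot)$ is a strictly increasing $C^1$ map. Since $u_x$ is bounded on $[0,T]\times\R$, it is onto $\R$, hence a homeomorphism of $\R$. Therefore
\[
\{k(t)\neq0\}=X(t;\{k_0\neq0\}),
\]
and taking closures, which commute with homeomorphisms, gives $\operatorname{supp}k(t)=X(t;\operatorname{supp}k_0)$. This set is compact because $\operatorname{supp}k_0$ is compact and $X(t;\cdot)$ is continuous.
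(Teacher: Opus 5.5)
Your proposal is correct and follows the paper's proof essentially line by line. You use the same once- and twice-differentiated transport equations, the same three $L^2$ energy identities, and the same H\"older bound $\norm[L^2]{u_{xx}}\norm[L^\infty]{k_x}\norm[L^2]{k_{xx}}$ via $H^1(\R)\hookrightarrow L^\infty(\R)$, followed by Gr\"onwall; your homeomorphism argument for the support statement just spells out the paper's remark that characteristics starting outside $\operatorname{supp}k_0$ carry the value zero.
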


\begin{proof}
Starting from $k_t+uk_x=0$, differentiate once and twice in $x$:
\begin{align}
 (k_x)_t+u k_{xx}+u_xk_x&=0,
 \label{eq:kxtransport}\\
 (k_{xx})_t+u k_{xxx}+2u_xk_{xx}+u_{xx}k_x&=0.
 \label{eq:kxxtransport}
\end{align}
Taking the $L^2$ inner products of these three equations with
$k,k_x,k_{xx}$, respectively, and using
$-\int u f_x\cdot f=\frac12\int u_x|f|^2$, gives
\begin{align}
 \frac12\frac{\dd}{\dd t}\norm[L^2]{k}^2
 &=\frac12\int_\R u_x|k|^2\,\dd x,\label{eq:k0energy}\\
 \frac12\frac{\dd}{\dd t}\norm[L^2]{k_x}^2
 &=-\frac12\int_\R u_x|k_x|^2\,\dd x,\label{eq:k1energy}\\
 \frac12\frac{\dd}{\dd t}\norm[L^2]{k_{xx}}^2
 &=-\frac32\int_\R u_x|k_{xx}|^2\,\dd x
   -\int_\R u_{xx}k_x\cdot k_{xx}\,\dd x.\label{eq:k2energy}
\end{align}
For the last term in \eqref{eq:k2energy}, H\"older's inequality and the
one-dimensional embedding $H^1(\R)\hookrightarrow L^\infty(\R)$ give
\[
 \left|\int u_{xx}k_x\cdot k_{xx}\right|
 \leq\norm[L^2]{u_{xx}}\norm[L^\infty]{k_x}
       \norm[L^2]{k_{xx}}
 \leq C\norm[L^2]{u_{xx}}\norm[H^2]{k}^2.
\]
The other terms are bounded by
$\norm[L^\infty]{u_x}\norm[H^2]{k}^2$.  Adding the three identities yields
\[
 \frac{\dd}{\dd t}\norm[H^2]{k}^2
 \leq C\bigl(\norm[L^\infty]{u_x}+\norm[L^2]{u_{xx}}\bigr)
       \norm[H^2]{k}^2.
\]
Integrating this scalar differential inequality and applying Gronwall's
lemma proves \eqref{eq:kH2bound}.  The support statement follows from the
transport formulas in \cref{lem:commute}: a characteristic that starts
outside $\operatorname{supp}k_0$ carries the value zero for all later times.
\end{proof}

\begin{lemma}[Regular Lorentz-force representation]\label{lem:force}
Let
\[
 \mathcal M:=\frac{b\cdot b_x}{\rho}.
\]
Then
\begin{align}
 h_x&=\rho k,\label{eq:hx}\\
 \mathcal M&=|h|^2\rho_x+\rho^2h\cdot k,\label{eq:Mrepresentation}\\
 \mathcal M_x&=|h|^2\rho_{xx}
 +4\rho(h\cdot k)\rho_x+\rho^3|k|^2+\rho^2h\cdot k_x.
 \label{eq:Mxrepresentation}
\end{align}
If $0<\rho\leq R$, $\rho\in H^2\cap L^1$,
$\norm[L^\infty]{h}\leq R$, and $\norm[H^2]{k}\leq R$, then
\begin{equation}\label{eq:sourceH1}
 \norm[H^2]{b}+\norm[H^1]{\mathcal M}
 \leq C_R\bigl(1+\norm[H^2]{\rho}\bigr).
\end{equation}
\end{lemma}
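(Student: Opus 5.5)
The identities \eqref{eq:hx}--\eqref{eq:Mxrepresentation} are pure algebra once we use the definitions $h=b/\rho$ and $k=\cD h=\rho^{-1}h_x$. Multiplying the definition of $k$ by $\rho$ gives \eqref{eq:hx} directly. Next we write $b=\rho h$, so that
\[
 b_x=\rho_x h+\rho h_x=\rho_x h+\rho^2 k .
\]
Then
\[
 b\cdot b_x=\rho\rho_x|h|^2+\rho^3 h\cdot k ,
\]
and dividing by $\rho$ yields \eqref{eq:Mrepresentation}; no negative power of $\rho$ survives. To get \eqref{eq:Mxrepresentation} we differentiate \eqref{eq:Mrepresentation} term by term, using $(|h|^2)_x=2h\cdot h_x=2\rho\, h\cdot k$ and $h_x\cdot k=\rho|k|^2$:
\[
 \mathcal M_x=|h|^2\rho_{xx}+2\rho(h\cdot k)\rho_x+2\rho\rho_x h\cdot k+\rho^2(\rho|k|^2+h\cdot k_x).
\]
Collecting the two middle terms gives the coefficient $4$. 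These computations are first done for smooth positive $\rho$ and then extended by density, as announced at the start of the section.

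For \eqref{eq:sourceH1} we control each piece using $\norm[L^\infty]{h}\le R$, $0<\rho\le R$, and the embedding $H^1\hookrightarrow L^\infty$, which gives $\norm[L^\infty]{k}+\norm[L^\infty]{k_x}\le CR$. From \eqref{eq:Mrepresentation},
\[
 \norm[L^2]{\mathcal M}\le R^2\norm[L^2]{\rho_x}+R^3\norm[L^2]{\rho}.
\]
Here $\norm[L^2]{\rho}\le\norm[H^2]{\rho}$ is available directly; alternatively $\norm[L^2]{\rho}\le(R\norm[L^1]{\rho})^{1/2}$, which is where $\rho\in L^1$ enters. From \eqref{eq:Mxrepresentation}, every term is a product of $L^\infty$-bounded factors with exactly one of $\rho_{xx}$, $\rho_x$, or $\rho$ in $L^2$, so $\norm[L^2]{\mathcal M_x}\le C_R\norm[H^2]{\rho}$. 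Note that the term $\rho^3|k|^2$ is bounded by $R^2\norm[L^\infty]{k}\norm[L^2]{k}$ times a constant, so it may instead be placed in the constant.

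For $b=\rho h$ we use $b_x=\rho_x h+\rho^2k$ and
\[
 b_{xx}=\rho_{xx}h+\rho_x\rho k+2\rho\rho_x k+\rho^2k_x=\rho_{xx}h+3\rho\rho_x k+\rho^2 k_x .
\]
Each term again has bounded coefficients multiplying one $L^2$ factor from $\{\rho,\rho_x,\rho_{xx},k,k_x\}$. This gives $\norm[H^2]{b}\le C_R(1+\norm[H^2]{\rho})$; the additive $1$ absorbs contributions bounded purely by $\norm[H^2]{k}\le R$, such as $\rho^2k_x$.

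The main obstacle is not the estimates, which are routine Hölder bounds, but justifying the differentiations when $h$ is only $L^\infty\cap W^{1,1}_{\mathrm{loc}}$. Here $h_x=\rho k$ is actually $L^\infty$ and lies in $H^1_{\mathrm{loc}}$, since $\rho\in H^2$ and $k\in H^2$. Hence $h\in W^{2,\infty}_{\mathrm{loc}}$, and the product rules hold almost everywhere and distributionally. I would verify this regularity first, then perform the computation above, so that no approximation is needed for the identities themselves.
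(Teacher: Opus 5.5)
Your proposal is correct and follows the paper's proof. Both arguments write $b=\rho h$ and use $h_x=\rho k$ to get $b_x=h\rho_x+\rho^2k$ and $b_{xx}=h\rho_{xx}+3\rho k\rho_x+\rho^2k_x$; they then form $b\cdot b_x/\rho$, differentiate it, and bound each product using $0<\rho\le R$, $|h|\le R$ and $H^1(\R)\hookrightarrow L^\infty(\R)$.
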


\begin{proof}
Since $b=\rho h$ and $h_x=\rho k$,
\[
 b_x=h\rho_x+\rho^2k,\qquad
 b_{xx}=h\rho_{xx}+3\rho k\rho_x+\rho^2k_x.
\]
Taking the scalar product of $b$ with $b_x$ gives
\eqref{eq:Mrepresentation}; differentiation gives
\eqref{eq:Mxrepresentation}.  The one-dimensional Sobolev embedding,
$\rho\in L^1\cap L^\infty$, and these formulas prove
\eqref{eq:sourceH1}.
\end{proof}

\begin{lemma}[Low-order Lorentz-force bound]\label{lem:Mlow}
Assume $0<\delta\leq1$, $\rho\in L^1\cap L^\infty$,
$\sqrt\rho\,u\in L^2$, $v\in L^\infty$, $h,k\in L^\infty$.  Then
\begin{equation}\label{eq:Mlow}
 \norm[L^2]{\mathcal M}
 \leq C\left(1+\norm[L^\infty]{v}
 +\norm[L^2]{\sqrt\rho\,u}\right).
\end{equation}
\end{lemma}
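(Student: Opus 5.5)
We start from the representation \eqref{eq:Mrepresentation} of \cref{lem:force}, $\mathcal M=|h|^2\rho_x+\rho^2h\cdot k$, and estimate the two terms separately in $L^2$.

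The second term is harmless. With $\rho\in L^1\cap L^\infty$ we have $\norm[L^2]{\rho^2}\le\norm[L^\infty]{\rho}^{3/2}\norm[L^1]{\rho}^{1/2}$, so
\[
\norm[L^2]{\rho^2h\cdot k}\le \norm[L^\infty]{h}\norm[L^\infty]{k}\norm[L^\infty]{\rho}^{3/2}\norm[L^1]{\rho}^{1/2}\le C .
\]

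The first term carries the derivative of $\rho$, and the plan is to remove it with the effective velocity. By \eqref{eq:vintro}, $\rho_x=\alpha^{-1}\rho^{2-\delta}(v-u)$, which gives
\[
|h|^2\rho_x=\alpha^{-1}|h|^2\rho^{2-\delta}v-\alpha^{-1}|h|^2\rho^{2-\delta}u .
\]
\begin{itemize}
\item \emph{The $v$-part.} Since $2-\delta\ge1$, the weight satisfies $\rho^{2-\delta}\le\norm[L^\infty]{\rho}^{1-\delta}\rho$. Hence $\norm[L^2]{\rho^{2-\delta}}\le C\norm[L^2]{\rho}\le C\norm[L^\infty]{\rho}^{1/2}\norm[L^1]{\rho}^{1/2}$, and this piece is bounded by $C\norm[L^\infty]{v}$.
\item \emph{The $u$-part.} We write $\rho^{2-\delta}u=\rho^{3/2-\delta}\cdot\sqrt\rho\,u$. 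The exponent $3/2-\delta\ge\tfrac12>0$ for $0<\delta\le1$, so $\rho^{3/2-\delta}\le\norm[L^\infty]{\rho}^{3/2-\delta}$ is bounded. This piece is therefore at most $C\norm[L^2]{\sqrt\rho\,u}$.
\end{itemize}

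Adding the three bounds yields \eqref{eq:Mlow}. Here $C$ depends on $\norm[L^1\cap L^\infty]{\rho}$ and on $\norm[L^\infty]{h}$, $\norm[L^\infty]{k}$, through the powers computed above.

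The only delicate point is the exponent bookkeeping: the argument needs $\rho^{2-\delta}$ to be dominated by a nonnegative power of $\rho$ in both pieces. This holds exactly because $\delta\le1$ (for the $v$-part) and $\delta\le\tfrac32$ (for the $u$-part). No lower bound on $\rho$ is used anywhere, which is what makes the estimate independent of the vacuum.
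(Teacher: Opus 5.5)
Your proof is correct and is essentially the paper's argument. Both substitute $\rho_x=\alpha^{-1}\rho^{2-\delta}(v-u)$ into \eqref{eq:Mrepresentation}, bound the $u$-free terms using only $\rho\in L^1\cap L^\infty$, and control the remaining term through $\rho^{4-2\delta}|u|^2\le\norm[L^\infty]{\rho}^{3-2\delta}\rho|u|^2$, which is your factorization $\rho^{2-\delta}u=\rho^{3/2-\delta}\sqrt\rho\,u$ squared. One small overstatement, which does not affect the proof: your closing remark says the $v$-part works \emph{exactly because} $\delta\le1$, but $\rho^{2-\delta}\in L^2$ only needs $4-2\delta\ge1$.
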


\begin{proof}
Since $\rho_x=\alpha^{-1}\rho^{2-\delta}(v-u)$,
\[
 \mathcal M=\alpha^{-1}|h|^2\rho^{2-\delta}(v-u)
             +\rho^2h\cdot k.
\]
The terms without $u$ are square integrable by
$\rho\in L^1\cap L^\infty$.  For the remaining term,
\[
 \int_\R\rho^{4-2\delta}|u|^2\,\dd x
 \leq\norm[L^\infty]{\rho}^{3-2\delta}
       \int_\R\rho|u|^2\,\dd x,
\]
because $3-2\delta\geq1$.  This proves the result.
\end{proof}

\section{Energy and the density upper bound}\label{sec:basic}

\begin{lemma}[Mass and total energy]\label{lem:energy}
Every regular solution satisfies
\begin{equation}\label{eq:mass}
 \int_\R\rho(t,x)\,\dd x=M_0:=\int_\R\rho_0\,\dd x
\end{equation}
and
\begin{equation}\label{eq:energy}
 E(t)+\alpha\int_0^t\!\int_\R\rho^\delta u_x^2\,\dd x\dd s=E(0),
\end{equation}
where
\begin{equation}\label{eq:Edef}
 E(t)=\int_\R\left(\frac12\rho u^2
 +\frac{A}{\gamma-1}\rho^\gamma+\frac12|b|^2\right)\dd x.
\end{equation}
\end{lemma}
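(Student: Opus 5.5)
The plan is to derive both identities by integrating local conservation laws in $x$ and then discarding the boundary terms at $\pm\infty$. These boundary terms vanish because of the far-field behaviour in \eqref{eq:farfield} together with the integrability in \cref{def:regular}. As in \cref{sec:geometry}, I would first carry out all computations for a smooth solution with $\rho>0$ and sufficient decay. The identities then pass to regular solutions by approximation, since every term in \eqref{eq:energy} is continuous in time, or $L^1$ in time, in the topologies of \cref{def:regular}.

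For \eqref{eq:mass}, I integrate $\rho_t+(\rho u)_x=0$ over $\R$. The flux $\rho u$ lies in $W^{1,1}$: indeed $\rho\in L^1\cap L^\infty$ and $u\in H^2$, and $\rho_x u$ is integrable since $\rho^{\gamma-1}\in H^2$ and $\rho\le C$ give $\rho_x$ locally, with the tail handled by a cutoff argument. Hence $\rho u$ tends to $0$ at infinity and $\frac{\dd}{\dd t}\int\rho\,\dd x=0$.

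For \eqref{eq:energy}, I multiply the momentum equation by $u$ and use continuity to rewrite the inertial terms as $\bigl(\tfrac12\rho u^2\bigr)_t+\bigl(\tfrac12\rho u^3\bigr)_x$. For the pressure term, write $(A\rho^\gamma)_xu=(A\rho^\gamma u)_x-A\rho^\gamma u_x$. Continuity gives $\bigl(\tfrac{A}{\gamma-1}\rho^\gamma\bigr)_t+\bigl(\tfrac{A}{\gamma-1}\rho^\gamma u\bigr)_x=-A\rho^\gamma u_x$, so the pressure term becomes a time derivative of the internal energy plus a flux. For the magnetic term, taking the dot product of the induction equation with $b$ yields $\bigl(\tfrac12|b|^2\bigr)_t+\bigl(\tfrac12|b|^2u\bigr)_x=-\tfrac12|b|^2u_x$. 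Together with $(\tfrac12|b|^2)_xu=(\tfrac12|b|^2u)_x-\tfrac12|b|^2u_x$, the magnetic pressure work is exactly the time derivative of the magnetic energy plus a flux. The viscous term gives $(\alpha\rho^\delta u_x)_xu=(\alpha\rho^\delta u_xu)_x-\alpha\rho^\delta u_x^2$. Summing these and integrating over $\R$ and over $(0,t)$ gives \eqref{eq:energy}, provided all flux terms vanish at infinity.

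The only real obstacle is justifying that the fluxes $\rho u^3$, $\rho^\gamma u$, $|b|^2u$ and $\rho^\delta u_xu$ are integrable in $x$ and tend to $0$ as $|x|\to\infty$, with no lower bound on $\rho$ available. For the first three this is immediate: $u\in H^2\subset L^\infty$ with $u\to0$, $\rho\in L^\infty$, and $b\in H^2$. For the viscous flux, $\rho^\delta\le C$ and $u,u_x\in H^1$ vanish at infinity. If needed, I would make this rigorous by multiplying by a cutoff $\chi(x/R)$ and letting $R\to\infty$. The commutator terms are bounded by $R^{-1}\int(\rho|u|^3+\rho^\gamma|u|+|b|^2|u|+\rho^\delta|u_x||u|)\,\dd x$, and this integral is finite because $\rho\in L^1\cap L^\infty$, $u\in L^2\cap L^\infty$ and $b\in L^2$. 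Hence the commutator terms tend to zero uniformly in $s\in[0,t]$, and the identity follows without any positivity assumption on the infimum of $\rho$.
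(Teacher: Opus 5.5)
Your proposal is correct and follows the paper's proof essentially verbatim. You add the same three local identities (internal energy, magnetic energy $B_t+(uB)_x+Bu_x=0$, kinetic energy) so that $pu_x$ and $Bu_x$ cancel, and you remove the fluxes with the same cutoff $\chi(x/R)$, whose errors are $O(1/R)$ times the $L^1$ norms of the fluxes; you in fact spell those bounds out more explicitly than the paper does.

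One small point: the cutoff alone already gives \eqref{eq:mass}, since the error is at most $CtR^{-1}\sup_{s\le t}\|u(s)\|_{L^\infty}\|\rho(s)\|_{L^1}$. So you can drop the claim $\rho u\in W^{1,1}$. As written, its justification via $\rho^{\gamma-1}\in H^2$ is only local when $\gamma>2$; if you want it globally, use $\rho_x=\delta^{-1}\rho^{2-\delta}\psi$ with $\psi\in H^1$.
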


\begin{proof}
We first perform the calculation for a smooth solution that decays rapidly;
the cutoff argument at the end makes it valid for a regular solution.  Put
\[
 p=A\rho^\gamma,\qquad e=\frac{A}{\gamma-1}\rho^\gamma,
 \qquad B=\frac12|b|^2.
\]
The continuity equation and the chain rule give the internal-energy identity
\begin{equation}\label{eq:internal-local}
 e_t+(ue)_x+pu_x=0.
\end{equation}
Indeed, after expanding $(ue)_x$, its left-hand side is
$\frac{A\gamma}{\gamma-1}\rho^{\gamma-1}
(\rho_t+u\rho_x)+\frac{A\gamma}{\gamma-1}\rho^\gamma u_x$,
which vanishes because $\Dt\rho=-\rho u_x$.

Writing the induction equation as $b_t+ub_x+u_xb=0$ and taking its scalar
product with $b$ gives
\begin{equation}\label{eq:magnetic-local}
 B_t+(uB)_x+Bu_x=0.
\end{equation}
Next multiply the momentum equation by $u$.  The continuity equation turns
the first two terms into a kinetic-energy derivative:
\begin{equation}\label{eq:kinetic-local}
 \left(\frac12\rho u^2\right)_t
 +\left(\frac12\rho u^3\right)_x
 +up_x+uB_x=u(\alpha\rho^\delta u_x)_x.
\end{equation}
Use
\[
 up_x=(up)_x-pu_x,\quad uB_x=(uB)_x-Bu_x,\quad
 u(\alpha\rho^\delta u_x)_x
 =(\alpha\rho^\delta uu_x)_x-\alpha\rho^\delta u_x^2.
\]
Adding \eqref{eq:internal-local}--\eqref{eq:kinetic-local} cancels both
$pu_x$ and $Bu_x$ and yields the local identity
\begin{align}
 &\partial_t\left(\frac12\rho u^2+e+B\right)
 +\alpha\rho^\delta u_x^2+\partial_x\left[
 \frac12\rho u^3+u(e+p)+2uB-\alpha\rho^\delta uu_x
 \right]=0.\label{eq:total-local}
\end{align}
This line displays explicitly why the magnetic work from momentum is
cancelled by the magnetic energy from induction.

Let $\chi_R(x)=\chi(x/R)$, where $0\leq\chi\leq1$, $\chi=1$ on
$[-1,1]$, and $\chi=0$ outside $[-2,2]$.  Integrating continuity and
\eqref{eq:total-local} against $\chi_R$ leaves flux errors supported in
$R\leq|x|\leq2R$ and multiplied by $\norm[L^\infty]{\chi_R'}\leq C/R$.
The regularity in \cref{def:regular}, followed by approximation if needed,
makes those errors tend to zero as $R\to\infty$.  We obtain
\eqref{eq:mass} and \eqref{eq:energy}.
\end{proof}

\begin{lemma}[Uniform upper bound for the density]\label{lem:densityupper}
There is a constant $C_0$, depending only on $E(0)$, $M_0$, $\alpha$,
$\delta$, and $\norm[L^\infty]{\rho_0}$, such that
\begin{equation}\label{eq:rhoinfty}
 0<\rho(t,x)\leq C_0,\qquad (t,x)\in[0,\infty)\times\R.
\end{equation}
\end{lemma}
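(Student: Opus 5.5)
We will use the particle-path identity \eqref{eq:densityintro} from the introduction. Set $\xi(t,x)=\int_{-\infty}^x\rho u(t,y)\,\dd y$. First we bound $\xi$ uniformly: by Cauchy--Schwarz, mass conservation \eqref{eq:mass} and the energy identity \eqref{eq:energy},
\[
 |\xi(t,x)|\leq\int_\R\rho|u|\,\dd y
 \leq\Bigl(\int_\R\rho\,\dd y\Bigr)^{1/2}\Bigl(\int_\R\rho u^2\,\dd y\Bigr)^{1/2}
 \leq (2M_0E(0))^{1/2}.
\]
This bound depends only on $M_0$ and $E(0)$.

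Next we derive the identity. Integrating the momentum equation over $(-\infty,x)$ uses the far-field decay \eqref{eq:farfield} together with the regularity in \cref{def:regular}, which makes $\rho u^2$, $\rho^\gamma$, $|b|^2$ and $\rho^\delta u_x$ vanish at $-\infty$. This gives
\[
 \xi_t+\rho u^2+A\rho^\gamma+\tfrac12|b|^2=\alpha\rho^\delta u_x .
\]
Since $\xi_x=\rho u$, we have $\Dt\xi=\xi_t+u\xi_x=\xi_t+\rho u^2$. The continuity equation gives $\Dt\rho=-\rho u_x$, so
\[
 \Dt\Bigl(\tfrac\alpha\delta\rho^\delta\Bigr)=\alpha\rho^{\delta-1}\Dt\rho=-\alpha\rho^\delta u_x .
\]
Adding the two relations, the viscous term cancels exactly and we obtain
\[
 \Dt\Bigl(\xi+\tfrac\alpha\delta\rho^\delta\Bigr)=-A\rho^\gamma-\tfrac12|b|^2\leq0 .
\]
This is \eqref{eq:densityintro}. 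It holds for every $0<\delta\leq1$; the term $\rho^\delta/\delta$ is fine because $\delta>0$.

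We then integrate along the particle path $X(t;a)$, which is well defined with positive Jacobian \eqref{eq:flowjacobian}:
\[
 \tfrac\alpha\delta\rho^\delta(t,X(t;a))\leq\tfrac\alpha\delta\rho_0^\delta(a)+\xi(0,a)-\xi(t,X(t;a))
 \leq\tfrac\alpha\delta\norm[L^\infty]{\rho_0}^\delta+2(2M_0E(0))^{1/2}.
\]
The map $a\mapsto X(t;a)$ is a bijection of $\R$ for a regular solution, so this bound holds at every $(t,x)$. Hence $\rho\leq C_0$ with
\[
 C_0=\Bigl(\norm[L^\infty]{\rho_0}^\delta+\tfrac{2\delta}{\alpha}(2M_0E(0))^{1/2}\Bigr)^{1/\delta},
\]
which depends only on the listed quantities. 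Positivity of $\rho$ is part of the notion of regular solution; it is also seen from $\rho(t,X)=\rho_0(a)/X_a>0$. Note that $\norm[L^\infty]{\rho_0}<\infty$ follows from $\rho_0^{\gamma-1}\in H^2\subset L^\infty$.

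The main obstacle is justifying the integration from $-\infty$: the boundary terms at $-\infty$ must vanish, and $\Dt$ must be applied rigorously. If needed, we will first do the computation for the positive-density smooth approximate solutions and then pass to the limit. The crucial decay is that of $\rho^\delta u_x$. Since $u\in H^2$, $u_x\in H^1$ tends to $0$, and $\rho$ is bounded on $[0,T]$ because $\rho^{\gamma-1}\in C([0,T];H^2)$. So these limits are justified for regular solutions.
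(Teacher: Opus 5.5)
Your proposal is correct and follows essentially the same route as the paper. Both arguments use the same flux $\xi=\int_{-\infty}^x\rho u\,\dd y$ and the same exact cancellation of the viscous term in $D_t\bigl(\xi+\frac\alpha\delta\rho^\delta\bigr)=-A\rho^\gamma-\frac12|b|^2\leq0$, followed by integration along particle paths and the Cauchy--Schwarz bound $|\xi|\leq(2M_0E(0))^{1/2}$. Your explicit constant and your remarks on bijectivity of the flow map and on the far-field decay of $\rho^\delta u_x$ are fine additions.
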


\begin{proof}
Set $\xi(t,x)=\int_{-\infty}^x\rho u(t,y)\,\dd y$.  The momentum
fluxes vanish at the left far field.  Integration from $-\infty$ to $x$
therefore gives
\begin{equation}\label{eq:xit}
 \xi_t=-\rho u^2-A\rho^\gamma-\frac12|b|^2
       +\alpha\rho^\delta u_x.
\end{equation}
Since $\xi_x=\rho u$, adding $u\xi_x=\rho u^2$ to
\eqref{eq:xit} cancels the convective term:
\begin{equation}\label{eq:Dtxi}
 \Dt\xi=\alpha\rho^\delta u_x-A\rho^\gamma-\frac12|b|^2.
\end{equation}
On the other hand, \eqref{eq:materialrho} and the chain rule yield
\begin{equation}\label{eq:Dtrhodelta}
 \Dt\left(\frac\alpha\delta\rho^\delta\right)
 =\alpha\rho^{\delta-1}\Dt\rho=-\alpha\rho^\delta u_x.
\end{equation}
Adding \eqref{eq:Dtxi} and \eqref{eq:Dtrhodelta} proves
\eqref{eq:densityintro}.  If $X(t;x)$ is the particle that starts at $x$,
integration of this nonpositive derivative from $0$ to $t$ gives
\[
 \frac\alpha\delta\rho^\delta(t,X(t;x))
 \leq \frac\alpha\delta\rho_0^\delta(x)
      +|\xi_0(x)|+|\xi(t,X(t;x))|.
\]
For every $x$, Cauchy--Schwarz, mass conservation, and energy give
\[
 |\xi(t,x)|\leq\int_\R\rho|u|\,\dd y
 \leq M_0^{1/2}\norm[L^2]{\sqrt\rho\,u(t)}
 \leq (2M_0E(0))^{1/2}.
\]
The same estimate holds at $t=0$.  Taking the $\delta$th root proves
\eqref{eq:rhoinfty}, with a constant independent of time.  No estimate of
$|b|^2$ was needed in this argument: magnetic pressure appears with a minus
sign in \eqref{eq:densityintro} and can only improve the upper bound.
\end{proof}

\begin{lemma}[Weighted velocity moments]\label{lem:moments}
For every $p\geq0$ and $T>0$,
\begin{equation}\label{eq:momentbound}
 \sup_{0\leq t\leq T}\int_\R\rho|u|^{p+2}\,\dd x
 +\int_0^T\!\int_\R\rho^\delta|u|^p u_x^2\,\dd x\dd t
 \leq C_{T,p}.
\end{equation}
\end{lemma}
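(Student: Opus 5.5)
We multiply the momentum equation, written in nonconservative form
\[
 \rho(u_t+uu_x)+\bigl(A\rho^\gamma\bigr)_x+b\cdot b_x=(\alpha\rho^\delta u_x)_x,
\]
by $|u|^pu$ and integrate over $\R$. The case $p=0$ is \cref{lem:energy}, so we fix $p>0$. We argue first for smooth positive approximate solutions and justify the limit afterwards by the cutoff argument of \cref{lem:energy}. The continuity equation turns the first group into $\frac1{p+2}\frac{\dd}{\dd t}\int\rho|u|^{p+2}$. The viscous term, after integration by parts, gives the good quantity $(p+1)\alpha\int\rho^\delta|u|^pu_x^2$. Hence
\[
 \frac1{p+2}\frac{\dd}{\dd t}\int\rho|u|^{p+2}
 +(p+1)\alpha\int\rho^\delta|u|^pu_x^2
 =(p+1)\int\Bigl(A\rho^\gamma+\tfrac12|b|^2\Bigr)|u|^pu_x .
\]
Here we have also integrated the pressure and magnetic-pressure terms by parts. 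Writing the magnetic force as $(\tfrac12|b|^2)_x$ avoids any use of $h,k$ at this stage.

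The right-hand side is treated by Young's inequality:
\[
 \Bigl(A\rho^\gamma+\tfrac12|b|^2\Bigr)|u|^p|u_x|
 \le \tfrac{\alpha}{2}\rho^\delta|u|^pu_x^2
 +C\rho^{-\delta}\Bigl(\rho^{2\gamma}+|b|^4\Bigr)|u|^p .
\]
We then use $b=\rho h$ with $\norm[L^\infty]{h}=\norm[L^\infty]{h_0}$ from \cref{lem:commute} to get $|b|^4\le C\rho^4$. Since $\rho\le C_0$ by \cref{lem:densityupper}, the remainder is bounded by
\[
 C\bigl(\rho^{2\gamma-\delta}+\rho^{4-\delta}\bigr)|u|^p .
\]

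The next step is to write this remainder as $\rho\cdot\rho^{2\gamma-\delta-1}|u|^p$ and $\rho\cdot\rho^{3-\delta}|u|^p$. The exponents $2\gamma-\delta-1$ and $3-\delta$ are nonnegative: the first because $\gamma\ge\delta+\frac12$ in \eqref{eq:parameters-sublinear} or $\gamma>\frac32$ in \eqref{eq:parameters-linear}, the second because $\delta\le1$. So both powers of $\rho$ are bounded by constants. Young's inequality in the form $|u|^p\le 1+|u|^{p+2}$ then gives a remainder of at most
\[
 C\int\rho+C\int\rho|u|^{p+2}=CM_0+C\int\rho|u|^{p+2}.
\]
Gronwall's lemma then yields \eqref{eq:momentbound}, with a constant depending on $T$, $p$, $E(0)$, $M_0$, $\norm[L^\infty]{h_0}$ and the initial moment $\int\rho_0|u_0|^{p+2}$. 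That initial moment is finite because $u_0\in H^2\subset L^\infty$ and $\rho_0\in L^1$.

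The main obstacle is exactly this exponent bookkeeping at vacuum. The factor $\rho^{-\delta}$ produced by Young's inequality is singular, so every remainder must retain at least one power of $\rho$. This is where the parameter restriction on $\gamma$ enters, and where the bound on $h$ (rather than on $b$) is essential for the magnetic term.

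The secondary technical point is justifying the integrations by parts at the far field. We handle it by working on the positive-density approximations, with the cutoff $\chi_R$ as in \cref{lem:energy}, and passing to the limit using the bounds above, which do not depend on any lower bound for $\rho$.
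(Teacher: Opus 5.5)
Your argument is correct and is essentially the paper's proof. It uses the same multiplier $|u|^pu$, the same Young splitting against $\rho^{\delta/2}|u|^{p/2}u_x$, the bound $|b|\le\norm[L^\infty]{h_0}\rho$, and the same reduction via $\{|u|\le1\}\cup\{|u|>1\}$ to $M_0+\int\rho|u|^{p+2}$ before Gronwall. One correction: $2\gamma-\delta-1\geq0$ already follows from $\gamma>1\geq\delta$, so the restrictions $\gamma\geq\delta+\frac12$ and $\gamma>\frac32$ do not enter this lemma; they are used only in \cref{lem:vinfty}, as \cref{rem:exponents} notes.
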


\begin{proof}
We first take $p$ so that the multiplier is smooth; a standard regularization
of $|u|^p u$ removes this harmless restriction.  Multiply the conservative
momentum equation by $|u|^pu$ and integrate.  The continuity equation
combines the time and convection terms, while
$(|u|^pu)_x=(p+1)|u|^pu_x$.  Thus
\begin{align}
 &\frac1{p+2}\frac{\dd}{\dd t}
    \int_\R\rho|u|^{p+2}\,\dd x
 +\alpha(p+1)\int_\R\rho^\delta|u|^pu_x^2\,\dd x
=(p+1)\int_\R
 \left(A\rho^\gamma+\frac12|b|^2\right)|u|^pu_x\,\dd x.
 \label{eq:momentidentity}
\end{align}
Young's inequality with the dissipative factor
$\rho^{\delta/2}|u|^{p/2}u_x$ gives
\begin{align}
 \left|\int_\R A\rho^\gamma|u|^pu_x\,\dd x\right|
 &\leq\varepsilon\int_\R\rho^\delta|u|^pu_x^2\,\dd x
 +C_{\varepsilon,p}\int_\R\rho^{2\gamma-\delta}|u|^p\,\dd x,
 \label{eq:momentpressure}\\
 \left|\int_\R\frac{|b|^2}{2}|u|^pu_x\,\dd x\right|
 &\leq\varepsilon\int_\R\rho^\delta|u|^pu_x^2\,\dd x
 +C_{\varepsilon,p}\int_\R\rho^{4-\delta}|u|^p\,\dd x.
 \label{eq:momentmag}
\end{align}
Here we used $|b|=\rho|h|\leq\norm[L^\infty]{h_0}\rho$.

We spell out the last integrability step.  If $q>1$, split the line into
$\{|u|\leq1\}$ and $\{|u|>1\}$.  The density upper bound implies
\[
 \rho^q|u|^p
 \leq C\rho\quad\hbox{on }\{|u|\leq1\},
 \qquad
 \rho^q|u|^p
 \leq C\rho|u|^{p+2}\quad\hbox{on }\{|u|>1\}.
\]
Consequently,
\begin{equation}\label{eq:momentlower}
 \int_\R\rho^q|u|^p\,\dd x
 \leq C\left(M_0+\int_\R\rho|u|^{p+2}\,\dd x\right).
\end{equation}
Both exponents $2\gamma-\delta$ and $4-\delta$ are larger than one.
Choose $\varepsilon$ small enough to absorb the two dissipative terms in
\eqref{eq:momentidentity}, use \eqref{eq:momentlower}, and apply Gronwall's
lemma.  The initial moment is finite because $u_0\in H^2(\R)$ is bounded
and $\rho_0\in L^1$.  This proves \eqref{eq:momentbound} for every $p\geq0$.
\end{proof}

\section{Effective velocity and modified entropy}\label{sec:effective}

\begin{lemma}[Magnetic effective-velocity entropy]\label{lem:BD}
Let $v$ be given by \eqref{eq:vintro}.  Then
\begin{align}
 &\frac{\dd}{\dd t}\int_\R
 \left(\frac12\rho|v|^2+\frac{A}{\gamma-1}\rho^\gamma
       +\frac12\rho^2|h|^2\right)\dd x
 +\alpha A\gamma\int_\R\rho^{\gamma+\delta-3}\rho_x^2\,\dd x+\alpha\int_\R |h|^2\rho^{\delta-1}\rho_x^2\,\dd x
 \nonumber\\
 &=-\alpha\int_\R\rho^{\delta+1}(h\cdot k)\rho_x\,\dd x.
 \label{eq:BDidentity}
\end{align}
Consequently, for every $T>0$,
\begin{align}
 &\sup_{0\leq t\leq T}\int_\R
 \left(\rho|v|^2+\rho^\gamma+|b|^2\right)\dd x
 +\int_0^T\!\int_\R\rho^{\gamma+\delta-3}\rho_x^2\,\dd x\dd t
 +\int_0^T\!\int_\R |h|^2\rho^{\delta-1}\rho_x^2\,\dd x\dd t
 \leq C_T.
 \label{eq:BDbound}
\end{align}
\end{lemma}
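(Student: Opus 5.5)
The plan is to derive \eqref{eq:BDidentity} by a direct computation built on the effective-velocity equation \eqref{eq:vrawintro}. I will then close \eqref{eq:BDbound} with Gronwall's lemma, using the density upper bound (\cref{lem:densityupper}), the basic energy (\cref{lem:energy}), and the $L^\infty$ control of $h,k$ (\cref{lem:commute}). As in \cref{lem:energy}, I first compute for smooth, rapidly decaying positive solutions. Boundary terms are then removed with the cutoff $\chi_R$, and the identity passes to regular solutions by approximation.

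\emph{Step 1: the identity.} Write $p=A\rho^\gamma$, $e=\frac{A}{\gamma-1}\rho^\gamma$ and $B=\frac12|b|^2=\frac12\rho^2|h|^2$.

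First, the continuity equation together with $\rho\Dt v=-(p+B)_x$ gives
\[
\frac{\dd}{\dd t}\int\tfrac12\rho|v|^2=-\int v\,(p_x+B_x).
\]
I split $v=u+\alpha\rho^{\delta-2}\rho_x$.

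The $u$-part gives $\int(p+B)u_x$ after integrating by parts. This cancels exactly against $\frac{\dd}{\dd t}\int e=-\int pu_x$ and $\frac{\dd}{\dd t}\int B=-\int Bu_x$, which follow from \eqref{eq:internal-local} and \eqref{eq:magnetic-local}.

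For the remaining part, two observations suffice:
\begin{itemize}
\item The pressure gives $-\alpha\int\rho^{\delta-2}\rho_x p_x=-\alpha A\gamma\int\rho^{\gamma+\delta-3}\rho_x^2$.
\item The magnetic term uses $B_x=b\cdot b_x=\rho\mathcal M$ and the representation \eqref{eq:Mrepresentation}. This yields
\[
-\alpha\int\rho^{\delta-1}\rho_x\bigl(|h|^2\rho_x+\rho^2h\cdot k\bigr),
\]
which is precisely the coercive term $\alpha\int|h|^2\rho^{\delta-1}\rho_x^2$ plus the error $-\alpha\int\rho^{\delta+1}(h\cdot k)\rho_x$.
\end{itemize}
This gives \eqref{eq:BDidentity}.

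\emph{Step 2: the error term.} The natural first attempt is Young's inequality against $\rho^{\gamma+\delta-3}\rho_x^2$. That attempt produces $\rho^{\delta+5-\gamma}$, which is integrable only if $\gamma\le\delta+4$. I therefore avoid it and use instead
\[
\rho_x=\alpha^{-1}\rho^{2-\delta}(v-u),\qquad\text{so}\qquad \alpha\rho^{\delta+1}\rho_x=\rho^3(v-u).
\]
By \cref{lem:commute}, \cref{lem:densityupper} and \cref{lem:energy},
\[
\Bigl|\alpha\int\rho^{\delta+1}(h\cdot k)\rho_x\Bigr|
\le\norm[L^\infty]{h_0}\norm[L^\infty]{k_0}C_0^2\int\rho\,(|v|+|u|)
\le C\Bigl(M_0+E(0)+\int\rho|v|^2\Bigr).
\]
Gronwall then bounds $\int\rho|v|^2$ on $[0,T]$, together with both dissipative integrals. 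The $\rho^\gamma$ and $|b|^2$ parts of \eqref{eq:BDbound} also follow from the energy.

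\emph{Step 3: initial entropy.} It remains to check that $\int\rho_0|v_0|^2<\infty$. Since $v_0=u_0+\frac{\alpha}{\delta}\psi_0$, with $\psi_0\in L^2$ by \eqref{eq:datadelta} (for $0<\delta<1$) or by $(\log\rho_0)_x\in H^1$ (for $\delta=1$), this follows from $\rho_0\in L^\infty$.

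I expect the main obstacle to be the justification rather than the algebra. One must check that the flux terms arising from $v$, $\rho^{\delta-2}\rho_x$ and $\rho B$ vanish at the far-field vacuum, which requires the regular-solution class or the positive-density approximation. The key analytic point of the argument is the substitution for $\rho_x$ in Step 2, which makes the estimate uniform in $\gamma$.
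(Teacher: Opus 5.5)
Your proof is correct. Step 1 is the paper's computation: you take \eqref{eq:vrawintro} from the introduction, whereas the paper first verifies the viscous cancellation $\rho\Dt(\alpha\rho^{\delta-2}\rho_x)=-(\alpha\rho^\delta u_x)_x$ that produces it.

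The genuine difference is how you control the sign-indefinite error. The paper applies Young's inequality pointwise against the new coercive magnetic term,
\[
 \rho^{\delta+1}|h|\,|k|\,|\rho_x|\le\tfrac12|h|^2\rho^{\delta-1}\rho_x^2+\tfrac12\rho^{\delta+3}|k|^2 .
\]
It then absorbs half of $\alpha\int|h|^2\rho^{\delta-1}\rho_x^2$. The leftover is bounded uniformly in time by $\int\rho^{\delta+3}|k|^2\le\norm[L^\infty]{k_0}^2C_0^{\delta+2}M_0$. So the paper needs no Gronwall step, and the entropy grows at most linearly in $T$. This is exactly the mechanism the modified entropy was designed for: the extra magnetic dissipation pays for the only new error.

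You instead substitute $\alpha\rho^{\delta+1}\rho_x=\rho^3(v-u)$ and bound the error by $C\bigl(1+\int\rho|v|^2\bigr)$. This forces a Gronwall argument and gives exponential dependence on $T$. In return, you keep the full magnetic dissipation and never use the coercive magnetic term to absorb anything. Your route relies only on the definition of $v$, the density upper bound, the $L^\infty$ transport of $h,k$, and the basic energy.

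Both routes are uniform in $\gamma$. Your remark that Young's inequality breaks down for $\gamma>\delta+4$ applies only to pairing with the pressure dissipation. Pairing with the magnetic dissipation, as the paper does, avoids that restriction just as your substitution does.
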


\begin{proof}
Put $w=\alpha\rho^{\delta-2}\rho_x$, so that $v=u+w$.  We first verify the
viscous cancellation instead of treating it as a formal identity.  From
\eqref{eq:materialrho} and \eqref{eq:materialfx},
\begin{equation}\label{eq:Dtrhox}
 \Dt\rho_x=(\Dt\rho)_x-u_x\rho_x
 =-\rho u_{xx}-2u_x\rho_x.
\end{equation}
Therefore
\begin{align*}
 \rho\Dt w
 &=\alpha\rho\Dt(\rho^{\delta-2}\rho_x)=-\alpha\rho^\delta u_{xx}
   -\alpha\delta\rho^{\delta-1}\rho_xu_x
 =-(\alpha\rho^\delta u_x)_x.
\end{align*}
In other words, continuity implies the exact identity
\begin{equation}\label{eq:visccancel}
 \rho\Dt(\alpha\rho^{\delta-2}\rho_x)
 =-(\alpha\rho^\delta u_x)_x.
\end{equation}
The nonconservative momentum equation is
$\rho\Dt u+(A\rho^\gamma+|b|^2/2)_x
=(\alpha\rho^\delta u_x)_x$.  Adding \eqref{eq:visccancel} cancels its
right-hand side and gives \eqref{eq:vrawintro}.

Multiply that equation by $v=u+w$ and integrate.  Continuity gives
\[
 \int_\R\rho v\cdot\Dt v\,\dd x
 =\frac{\dd}{\dd t}\int_\R\frac12\rho|v|^2\,\dd x.
\]
The $u$ part of the pressure work is the positive time derivative of the
internal and magnetic energies, by \eqref{eq:internal-local} and
\eqref{eq:magnetic-local}.  For the $w$ part, integration by parts gives
\begin{align*}
 \int_\R\left(A\rho^\gamma+\frac12|b|^2\right)w_x\,\dd x
 ={}&-\alpha A\gamma\int_\R
 \rho^{\gamma+\delta-3}\rho_x^2\,\dd x\\
 &-\alpha\int_\R\rho^{\delta-2}\rho_x\,b\cdot b_x\,\dd x.
\end{align*}
Combining these identities yields
\begin{align*}
 \frac{\dd}{\dd t}\int_\R
 \left(\frac12\rho v^2+\frac{A}{\gamma-1}\rho^\gamma
 +\frac12|b|^2\right)\dd x
 ={}&-\alpha A\gamma\int_\R
 \rho^{\gamma+\delta-3}\rho_x^2\,\dd x
 -\alpha\int_\R\rho^{\delta-2}\rho_x\,b\cdot b_x\,\dd x.
\end{align*}
To understand the magnetic term, use $b=\rho h$ and $h_x=\rho k$:
\begin{equation}\label{eq:bdmagneticexpand}
 b\cdot b_x=\rho|h|^2\rho_x+\rho^3h\cdot k.
\end{equation}
Thus
\[
 \rho^{\delta-2}\rho_x\,b\cdot b_x
 =|h|^2\rho^{\delta-1}\rho_x^2
  +\rho^{\delta+1}(h\cdot k)\rho_x,
\]
which converts the last integral into the final two terms in
\eqref{eq:BDidentity}.  The second one has no sign, but Young's inequality
gives
\begin{equation}\label{eq:BDyoung}
 \rho^{\delta+1}|h|\,|k|\,|\rho_x|
 \leq\frac12|h|^2\rho^{\delta-1}\rho_x^2
      +\frac12\rho^{\delta+3}|k|^2.
\end{equation}
The error on the right is uniformly bounded because
\begin{equation}\label{eq:bdremainder}
 \int_\R\rho^{\delta+3}|k|^2\,\dd x
 \leq\norm[L^\infty]{k_0}^2
       \norm[L^\infty]{\rho}^{\delta+2}M_0.
\end{equation}
Absorb one half of the magnetic gradient term, integrate from $0$ to $t$,
and use \eqref{eq:rhoinfty}.  The initial effective-velocity energy is
finite by the corresponding assumptions on
$(\rho_0^{\delta-1})_x$ or $(\log\rho_0)_x$.  This proves
\eqref{eq:BDbound}.  In particular, the cross term is controlled; it is not
being discarded or assigned a favorable sign.
\end{proof}

Dividing \eqref{eq:vrawintro} by $\rho$ and using
\eqref{eq:Mrepresentation} yields
\begin{equation}\label{eq:vequation}
 \Dt v+\left(\frac{A\gamma}{\alpha}\rho^{\gamma-\delta}
 +\frac{|h|^2}{\alpha}\rho^{2-\delta}\right)(v-u)
 =-\rho^2h\cdot k.
\end{equation}

\begin{lemma}[Time-integrable weighted pointwise velocity]
\label{lem:weightedpointwise}
For every finite $T>0$, the following estimates hold:
\begin{align}
 &\int_0^T\norm[L^\infty]{\rho^{1/2}u}\,\dd t\leq C_T,
 &&0<\delta<1,\label{eq:rhouint-sublinear}\\
 &\int_0^T\norm[L^\infty]{\rho^\iota u}\,\dd t\leq C_{T,\iota},
 &&\delta=1,\quad \frac12<\iota<1.
 \label{eq:rhouint-linear}
\end{align}
\end{lemma}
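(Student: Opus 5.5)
We bound $\rho^{\iota}u$ pointwise by the one-dimensional Sobolev (Gagliardo--Nirenberg) argument. For a function $f$ decaying at infinity,
\[
 |f(x)|\leq\int_\R|f_x|\,\dd y .
\]
We take $f=\rho^{\iota}|u|^{\beta}u$, or a power of $|u|$, chosen so that the derivative splits into terms controlled by \cref{lem:moments} and \cref{lem:BD}. Concretely, we write
\[
 \bigl(\rho^{\iota}u\bigr)_x=\iota\rho^{\iota-1}\rho_x u+\rho^{\iota}u_x ,
\]
so that
\[
 \norm[L^\infty]{\rho^{\iota}u}
 \leq\int_\R\rho^{\iota-1}|\rho_x||u|\,\dd x+\int_\R\rho^{\iota}|u_x|\,\dd x .
\]
Each integral then has to be integrable in time on $[0,T]$.

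\textbf{The $u_x$ term.} By Cauchy--Schwarz,
\[
 \int\rho^{\iota}|u_x|\,\dd x\leq\Bigl(\int\rho^\delta u_x^2\Bigr)^{1/2}\Bigl(\int\rho^{2\iota-\delta}\Bigr)^{1/2}.
\]
The first factor is in $L^2(0,T)$ by \eqref{eq:energy}. The second factor is bounded by mass and the upper bound \eqref{eq:rhoinfty} provided $2\iota-\delta\geq1$. For $\delta<1$ and $\iota=1/2$ this reads $1-\delta>0$, which is fine since $\rho^{1-\delta}\leq C\rho^{0}$ fails only for integrability. So I would instead use $\int\rho^{1-\delta}$, which is not controlled by mass alone.

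I will handle this by inserting a velocity weight. I split into $\{|u|\le1\}$ and $\{|u|>1\}$, or more simply estimate $\int\rho^{\iota}|u_x|$ using the moment bound \eqref{eq:momentbound} with a suitable $p$:
\[
 \rho^{\iota}|u_x|\leq\rho^{\delta/2}|u|^{p/2}|u_x|\cdot\rho^{\iota-\delta/2}|u|^{-p/2}.
\]
This is awkward when $u$ is small. A cleaner route is to apply the Sobolev inequality to $F=\rho^{a}|u|^{q}$ with $q>1$ large, and then take the $q$-th root. In that case
\[
 |F_x|\lesssim\rho^{a-1}|\rho_x||u|^q+\rho^{a}|u|^{q-1}|u_x| .
\]
The second term is bounded by $(\rho^\delta|u|^{2q-2-p}u_x^2)^{1/2}$ times $(\rho^{2a-\delta}|u|^{p})^{1/2}$, both controlled by \cref{lem:moments} as long as $2a-\delta\geq1$ (using \eqref{eq:momentlower}). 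Taking $a=q\iota$ with $q$ large makes $2a-\delta\geq1$ easy.

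\textbf{The $\rho_x$ term.} Here I would use \cref{lem:BD}:
\[
 \rho^{a-1}|\rho_x||u|^q\leq\rho^{(\gamma+\delta-3)/2}|\rho_x|\cdot\rho^{a-1-(\gamma+\delta-3)/2}|u|^q .
\]
By Cauchy--Schwarz this gives an $L^2_t$ factor times $\bigl(\int\rho^{2a-\gamma-\delta+1}|u|^{2q}\bigr)^{1/2}$. That last factor is bounded by \eqref{eq:momentlower} provided $2a-\gamma-\delta+1\geq1$, i.e.\ $2q\iota\geq\gamma+\delta$. Alternatively, I would use $\sqrt\rho v\in L^\infty L^2$, since $\rho_x=\alpha^{-1}\rho^{2-\delta}(v-u)$. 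Then
\[
 \rho^{a-1}|\rho_x||u|^q\lesssim\rho^{a+1-\delta}|v||u|^q+\rho^{a+1-\delta}|u|^{q+1}.
\]
Both terms are estimated by $\sqrt\rho|v|$ in $L^2$ and the moments, with no time integration needed, provided $a+1-\delta\ge 1/2$ plus room for the moment weights.

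\textbf{Exponent bookkeeping and the main obstacle.} After taking the $q$-th root, I get $\norm[L^\infty]{\rho^{a/q}u}=\norm[L^\infty]{\rho^{\iota}u}$, with $\int_0^T\norm[L^\infty]{F}\,\dd t\le C_T$. Hölder then gives $\int_0^T\norm[L^\infty]{F}^{1/q}\,\dd t\le C_T$. The real constraint is obtaining $\iota=1/2$ exactly when $\delta<1$, and $\iota$ close to $1/2$ but strictly above it when $\delta=1$. This is the step I expect to be the obstacle. The bad term is $\rho^{a+1-\delta}|v||u|^q$, which needs
\[
 \int\rho^{2a+1-2\delta}|u|^{2q}\leq C\int\rho|u|^{2q}+C ,
\]
i.e.\ $2a+1-2\delta\geq1$, i.e.\ $q\iota\ge\delta$, up to the requirement that the small-$|u|$ region is integrable. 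With $\iota=1/2$ this is $q\geq2\delta$; any large $q$ works.

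For $\delta=1$ the analogous inequality is borderline. The strict inequality $\iota>1/2$ is needed because the $\rho^{\iota}u_x$-type factor requires $\int\rho^{2a-\delta}|u|^{p}$ with $2a-\delta>1$ strictly in the $|u|\le1$ region. I will check each exponent against \eqref{eq:momentlower}, which needs $q>1$ strictly there, and choose $q$ depending on $\iota$ accordingly.
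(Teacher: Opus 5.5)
Your final argument is correct, but it takes a different route from the paper's. Delete the two exploratory false starts (the attempt with $f=\rho^\iota u$, and the splitting with $|u|^{-p/2}$) and present only the $q$-power argument.

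\emph{The paper's route.} The paper keeps $F=\rho^\iota u$ linear in $u$. It uses the Gagliardo--Nirenberg inequality $\norm[L^\infty]{F}\le C\norm[L^2]{F}^{1-\theta}\norm[L^p]{F_x}^{\theta}$ with an exponent $p<2$ ($p=2/(2-\delta)$, resp.\ $p=1/\iota$). The leftover weight in the $u_x$ term, $\rho^{(1-\delta)/2}$ resp.\ $\rho^{\iota-1/2}$, is placed in $L^{2/(1-\delta)}$ resp.\ $L^{2/(2\iota-1)}$, where mass alone controls it. The $\rho_x$ term is handled by $\rho^{\delta-3/2}\rho_x=\alpha^{-1}\sqrt\rho\,(v-u)$ together with a single high moment.

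\emph{Your route.} You apply the cruder bound $\norm[L^\infty]{F}\le\norm[L^1]{F_x}$ to $F=\rho^{q\iota}|u|^q$ and then take a $q$th root. Raising the power trades the missing density integrability for extra velocity powers, which \eqref{eq:momentlower} and \cref{lem:moments} absorb. If you take $p=2q-2$ in your splitting, only the basic dissipation $\rho^{\delta/2}u_x$ is needed. For $0<\delta<1$ the whole proof is two lines with $q=2$:
\[
 \norm[L^\infty]{\rho u^2}
 \le\frac1\alpha\int_\R\rho^{2-\delta}|v-u|\,u^2\,\dd x
 +2\norm[L^2]{\rho^{\delta/2}u_x}\norm[L^2]{\rho^{1-\delta/2}u}.
\]
The first term is bounded uniformly in time by \cref{lem:BD} and the moments, and the second lies in $L^2(0,T)$. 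Cauchy--Schwarz in time then gives \eqref{eq:rhouint-sublinear}.

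\emph{What each buys.} Your route's only constraints are $q\ge1$ and $q\iota\ge(1+\delta)/2$, and the latter already implies $q\iota\ge\delta$. So it yields the estimate for every $\iota>0$. The paper's method needs $\rho^{\iota-1/2}$ to be a positive power, so it is confined to $\iota>1/2$ when $\delta=1$.

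This means your closing paragraph misdiagnoses the situation. Nothing in your scheme is borderline at $\iota=1/2$. Also, the exponent called $q$ in \eqref{eq:momentlower} is a density exponent, not your velocity power, so the condition ``$q>1$ strictly'' you invoke there does not constrain $\iota$. This is worth recording, because \cref{lem:vinfty} uses $\iota$ only through the conditions $\gamma-\delta-\iota\ge0$ and $2-\delta-\iota\ge0$.
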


\begin{proof}
We prove the two cases separately because they use different spatial
exponents.

\smallskip
\noindent\emph{Case $0<\delta<1$.}
Let $F=\rho^{1/2}u$ and set $p_\delta=2/(2-\delta)$.  A useful factorization
of its derivative is
\begin{equation}\label{eq:Fx-sublinear}
 F_x=\frac12
 \bigl(\rho^{\delta-3/2}\rho_x\bigr)
 \bigl(\rho^{1-\delta}u\bigr)
 +\rho^{(1-\delta)/2}\bigl(\rho^{\delta/2}u_x\bigr).
\end{equation}
The effective-velocity and basic energies imply
\begin{equation}\label{eq:rhograd-weighted}
 \norm[L^2]{\rho^{\delta-3/2}\rho_x}
 =\alpha^{-1}\norm[L^2]{\sqrt\rho\,(v-u)}\leq C_T.
\end{equation}
In H\"older's inequality choose the exponents $2$ and
$2/(1-\delta)$.  Since
\[
 \int_\R|\rho^{1-\delta}u|^{2/(1-\delta)}\,\dd x
 =\int_\R\rho^2|u|^{2/(1-\delta)}\,\dd x,
\]
the first factor is controlled by \cref{lem:moments} and the density upper
bound.  The second product in \eqref{eq:Fx-sublinear} is controlled because
$\rho^{(1-\delta)/2}\in L^{2/(1-\delta)}$ by mass conservation and
$\rho^{\delta/2}u_x\in L^2(0,T;L^2)$ by energy.  Hence
\begin{equation}\label{eq:Fx-sublinear-bound}
 \int_0^T\norm[L^{p_\delta}]{F_x}^2\,\dd t\leq C_T.
\end{equation}
The one-dimensional Gagliardo--Nirenberg inequality reads
\begin{equation}\label{eq:GN-sublinear}
 \norm[L^\infty]{F}
 \leq C\norm[L^2]{F}^{1-\theta_\delta}
       \norm[L^{p_\delta}]{F_x}^{\theta_\delta},
 \qquad \theta_\delta=\frac1{1+\delta}.
\end{equation}
The $L^2$ norm of $F$ is bounded by the kinetic energy.  Integrating
\eqref{eq:GN-sublinear} in time and using
\eqref{eq:Fx-sublinear-bound} proves \eqref{eq:rhouint-sublinear}.

\smallskip
\noindent\emph{Case $\delta=1$.}
Fix $1/2<\iota<1$ and put $F=\rho^\iota u$.  This time
\begin{equation}\label{eq:Fx-linear}
 F_x=\iota(\rho^{-1/2}\rho_x)
       (\rho^{\iota-1/2}u)
 +\rho^{\iota-1/2}(\rho^{1/2}u_x).
\end{equation}
By the same effective-velocity estimate,
$\norm[L^2]{\rho^{-1/2}\rho_x}\leq C_T$.  Use H\"older with exponents
$2$ and $2/(2\iota-1)$.  The identities
\[
 \int_\R|\rho^{\iota-1/2}u|^{2/(2\iota-1)}\,\dd x
 =\int_\R\rho|u|^{2/(2\iota-1)}\,\dd x,
 \qquad
 \int_\R|\rho^{\iota-1/2}|^{2/(2\iota-1)}\,\dd x=M_0
\]
and \cref{lem:moments} show that
\begin{equation}\label{eq:Fx-linear-bound}
 \int_0^T\norm[L^{1/\iota}]{F_x}^2\,\dd t\leq C_{T,\iota}.
\end{equation}
Moreover, $\norm[L^2]{F}$ is controlled by kinetic energy and
\eqref{eq:rhoinfty}, since $2\iota-1>0$.  The corresponding
Gagliardo--Nirenberg inequality is
\[
 \norm[L^\infty]{F}
 \leq C\norm[L^2]{F}^{1-\theta_\iota}
       \norm[L^{1/\iota}]{F_x}^{\theta_\iota},
 \qquad \theta_\iota=\frac1{3-2\iota}.
\]
Time integration proves \eqref{eq:rhouint-linear}.
\end{proof}

\begin{lemma}[Bounded effective velocity]\label{lem:vinfty}
Under the parameter restrictions of \cref{thm:global-sublinear} or
\cref{thm:global-linear}, for every $T>0$,
\begin{equation}\label{eq:vinfty}
 \sup_{0\leq t\leq T}\norm[L^\infty]{v(t)}\leq C_T.
\end{equation}
\end{lemma}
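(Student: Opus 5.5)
Along each particle path $X(t;x)$, the effective velocity satisfies a linear damped ODE. By \eqref{eq:vequation},
\[
 \Dt v+a(t,x)v=a(t,x)u-\rho^2h\cdot k,\qquad
 a=\frac{A\gamma}{\alpha}\rho^{\gamma-\delta}+\frac{|h|^2}{\alpha}\rho^{2-\delta}\geq0.
\]
Since $a\geq0$, Duhamel's formula gives
\[
 |v(t,X(t;x))|\leq|v_0(x)|+\int_0^t\bigl(a|u|+\rho^2|h||k|\bigr)(s,X(s;x))\,\dd s .
\]
The last term is bounded by $C_0^2\norm[L^\infty]{h_0}\norm[L^\infty]{k_0}T$, using \cref{lem:densityupper} and \cref{lem:commute}. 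For the initial term, $v_0=u_0+\alpha\delta^{-1}\psi_0$ with $u_0\in H^2$ and $\psi_0\in H^1\subset L^\infty$ by the data assumptions, so $\norm[L^\infty]{v_0}$ is finite. The whole task therefore reduces to bounding $\int_0^T\norm[L^\infty]{a u}\,\dd t$.

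\emph{Case $0<\delta<1$.} Write
\[
 \rho^{\gamma-\delta}|u|=\rho^{\gamma-\delta-1/2}\,|\rho^{1/2}u|,\qquad
 \rho^{2-\delta}|u|=\rho^{3/2-\delta}\,|\rho^{1/2}u|.
\]
The hypothesis $\gamma\geq\delta+\frac12$ makes $\rho^{\gamma-\delta-1/2}\leq C$ by the density upper bound. Also $3/2-\delta>0$, so $\rho^{3/2-\delta}\leq C$, and $|h|\leq\norm[L^\infty]{h_0}$. Hence $\norm[L^\infty]{au}\leq C\norm[L^\infty]{\rho^{1/2}u}$, which is time integrable by \eqref{eq:rhouint-sublinear}.

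\emph{Case $\delta=1$.} Here $a=\frac{A\gamma}{\alpha}\rho^{\gamma-1}+\frac{|h|^2}{\alpha}\rho$. Choose $\iota\in(\frac12,1)$ with $\iota\leq\gamma-1$; this is possible precisely because $\gamma>\frac32$. Then
\[
 \rho^{\gamma-1}|u|=\rho^{\gamma-1-\iota}\rho^\iota|u|,\qquad
 \rho|u|=\rho^{1-\iota}\rho^\iota|u|,
\]
and both prefactors are bounded. So $\norm[L^\infty]{au}\leq C\norm[L^\infty]{\rho^\iota u}$, which is time integrable by \eqref{eq:rhouint-linear}.

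Taking the supremum over $x$ then gives \eqref{eq:vinfty}, since particle paths cover $\R$ because $X(t;\cdot)$ is a diffeomorphism by \eqref{eq:flowjacobian}.

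The main point to watch is justifying the characteristic argument rigorously near far-field vacuum, where $v$ contains $\rho^{\delta-2}\rho_x$. I would carry out the computation on the positive-density approximate solutions, where everything is smooth. The bound is uniform in the density floor, because every constant above depends only on energy, moments, $C_0$, and the transported $L^\infty$ norms of $h$ and $k$. The exponent bookkeeping is exactly where the restrictions $\gamma\geq\delta+\frac12$ and $\gamma>\frac32$ enter. The magnetic part of $a$ imposes no extra condition, because $2-\delta\geq1$.
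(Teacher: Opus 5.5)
Your proposal is correct and follows the paper's proof essentially step for step. Both arguments use the same ingredients:
\begin{itemize}
\item the integrating-factor formula along particle paths, with $a\geq0$;
\item the splitting $a|u|\leq C(\rho^{\gamma-\delta-1/2}+\rho^{3/2-\delta})|\rho^{1/2}u|$, respectively with $\rho^\iota u$ and $\tfrac12<\iota<\min\{1,\gamma-1\}$, fed into \cref{lem:weightedpointwise};
\item the bound $|\rho^2h\cdot k|\leq C$;
\item $v_0\in L^\infty$ from $\psi_0\in H^1$.
\end{itemize}
Your allowance of $\iota=\gamma-1$ and your remarks on approximation and on the flow covering $\R$ are harmless refinements.
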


\begin{proof}
Set
\[
 a=\frac{A\gamma}{\alpha}\rho^{\gamma-\delta}
   +\frac{|h|^2}{\alpha}\rho^{2-\delta},
 \qquad G=\rho^2h\cdot k.
\]
Along $X(t;x)$, equation \eqref{eq:vequation} becomes the ordinary
differential equation
\[
 \frac{\dd}{\dd t}V(t)+a(t,X(t;x))V(t)
 =a(t,X(t;x))U(t)-G(t,X(t;x)),
\]
where $V(t)=v(t,X(t;x))$ and $U(t)=u(t,X(t;x))$.  Its integrating-factor
formula is
\begin{align}
 V(t)={}&e^{-\int_0^t a(\tau,X(\tau;x))\,\dd\tau}v_0(x) +\int_0^t e^{-\int_s^t a(\tau,X(\tau;x))\,\dd\tau}
       (aU-G)(s,X(s;x))\,\dd s.
 \label{eq:vvariation}
\end{align}
Because $a\geq0$, both exponential factors are at most one.

Suppose first that $0<\delta<1$.  With $\iota=1/2$,
\begin{equation}\label{eq:au-sublinear}
 a|u|\leq C\left(
 \rho^{\gamma-\delta-1/2}+\rho^{3/2-\delta}
 \right)|\rho^{1/2}u|.
\end{equation}
The two displayed density powers are nonnegative precisely because
$\gamma\geq\delta+1/2$ and $\delta<1$.  Thus the density upper bound and
\eqref{eq:rhouint-sublinear} make $a|u|$ integrable along every path,
uniformly in its starting point.

When $\delta=1$, choose
\begin{equation}\label{eq:iotachoice}
 \frac12<\iota<\min\{1,\gamma-1\}.
\end{equation}
Such a number exists exactly because $\gamma>3/2$.  Now
\begin{equation}\label{eq:au-linear}
 a|u|\leq C\left(
 \rho^{\gamma-1-\iota}+\rho^{1-\iota}
 \right)|\rho^\iota u|,
\end{equation}
and both powers are again nonnegative.  Use
\eqref{eq:rhouint-linear}.

Finally, in both regimes,
\[
 |G|\leq\norm[L^\infty]{\rho}^2
       \norm[L^\infty]{h_0}\norm[L^\infty]{k_0}\leq C_0.
\]
Also $v_0\in L^\infty$ because $u_0\in H^2$ and
$v_0-u_0=(\alpha/\delta)\psi_0$ with $\psi_0\in H^1$.  Taking the supremum
over starting points in \eqref{eq:vvariation} proves
\eqref{eq:vinfty}.  The calculation also shows transparently why the two
different lower bounds on $\gamma$ occur in the two main theorems.
\end{proof}

\section{High-order closure}\label{sec:higher}

We now isolate every estimate where the Lorentz force changes the
degenerate Navier--Stokes argument.  This is essential: an unweighted
$b\in H^2$ assumption alone would not control $b\cdot b_x/\rho$ at vacuum.

\begin{lemma}[Source estimates without derivative loss]\label{lem:source}
Let $\mathcal M$ be defined by \eqref{eq:Mrepresentation} and put
\begin{equation}\label{eq:HG}
 H=\rho|h|^2,\qquad G=\rho^2h\cdot k.
\end{equation}
Then
\begin{equation}\label{eq:Mdecomp}
 \mathcal M=H_x-G,\qquad \Dt G=-2u_xG.
\end{equation}
Before $\rho_{xx}$ is known, the time-differentiated momentum estimate can
use
\begin{equation}\label{eq:Mttrick}
 \int_\R\mathcal M_tu_t\,\dd x
 =-\int_\R H_tu_{tx}\,\dd x-\int_\R G_tu_t\,\dd x.
\end{equation}
On every interval on which the low-order fluid estimates and
$\norm[H^2]{k}$ are finite,
\begin{equation}\label{eq:HGtime}
 \norm[L^2]{H_t}+\norm[L^2]{G_t}
 \leq C_T(1+\norm[H^1]{u}),
\end{equation}
where terms involving $\rho_x$ may equivalently be expressed through
$\rho_x=\alpha^{-1}\rho^{2-\delta}(v-u)$.
After $\psi_x\in L^2$ is known,
\begin{equation}\label{eq:Mxpsi}
 \norm[L^2]{\mathcal M_x}
 \leq C_T\bigl(1+\norm[L^2]{\psi_x}+\norm[H^1]{k}\bigr).
\end{equation}
\end{lemma}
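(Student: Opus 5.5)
The proof is a string of direct algebraic identities followed by a few routine bounds. I will use only $b=\rho h$, $h_x=\rho k$, $\Dt h=\Dt k=0$, $\Dt\rho=-\rho u_x$, and the low-order bounds already established: \cref{lem:densityupper,lem:BD,lem:vinfty}, \cref{lem:commute}, and \cref{lem:kH2}.

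\emph{Step 1: the decomposition.} With $H=\rho|h|^2$ we have
\[
H_x=|h|^2\rho_x+2\rho h\cdot h_x=|h|^2\rho_x+2\rho^2h\cdot k.
\]
Subtracting $G=\rho^2h\cdot k$ and comparing with \eqref{eq:Mrepresentation} gives $\mathcal M=H_x-G$. Since $\Dt h=\Dt k=0$, the only time variation of $G$ comes from $\rho^2$, so
\[
\Dt G=2\rho(\Dt\rho)\,h\cdot k=-2u_xG.
\]
Identity \eqref{eq:Mttrick} then follows by differentiating $\mathcal M=H_x-G$ in $t$, multiplying by $u_t$, and integrating $H_{xt}u_t$ by parts. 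The boundary terms vanish by the regularity class (or, more carefully, by a cutoff argument as in \cref{lem:energy}).

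\emph{Step 2: the time-derivative bound \eqref{eq:HGtime}.} I write each time derivative as a material derivative minus a convective term. For $H$, $\Dt H=-u_xH$, hence
\[
H_t=-u_xH-uH_x=-u_xH-u(\mathcal M+G).
\]
Here $|H|\le C\rho$ is bounded, so $\|u_xH\|_{L^2}\le C\|u_x\|_{L^2}$. For the convective part, $\|u\mathcal M\|_{L^2}\le\|u\|_{L^\infty}\|\mathcal M\|_{L^2}$. \Cref{lem:Mlow}, together with the bound on $v$ from \cref{lem:vinfty} and the energy bound on $\sqrt\rho\,u$, shows $\|\mathcal M\|_{L^2}\le C_T$. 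Finally $\|u\|_{L^\infty}\le C\|u\|_{H^1}$.

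Similarly,
\[
G_t=-2u_xG-uG_x,\qquad G_x=2\rho\rho_x\,h\cdot k+\rho^3|k|^2+\rho^2h\cdot k_x.
\]
In $G_x$, the factor $\rho\rho_x=\alpha^{-1}\rho^{3-\delta}(v-u)$ is bounded in $L^2$ using $v\in L^\infty$, $\rho\in L^1\cap L^\infty$ and $\sqrt\rho\,u\in L^2$. The term $\rho^2h\cdot k_x$ is bounded in $L^2$ by $\|k\|_{H^1}$. Hence $\|uG_x\|_{L^2}\le C_T\|u\|_{L^\infty}$, and \eqref{eq:HGtime} follows. The only care needed in this step is that no negative power of $\rho$ appears, which is guaranteed by the representation of $\rho_x$ in terms of $v-u$.

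\emph{Step 3: the second-order bound \eqref{eq:Mxpsi}.} I start from \eqref{eq:Mxrepresentation}. The first task is to express $\rho_{xx}$ through $\psi$. From $\psi=\delta\rho^{\delta-2}\rho_x$ we get
\[
\rho_x=\delta^{-1}\rho^{2-\delta}\psi,\qquad
\rho_{xx}=\delta^{-1}\rho^{2-\delta}\psi_x+\delta^{-2}(2-\delta)\rho^{3-2\delta}\psi^2.
\]
The first term is bounded by $C\|\psi_x\|_{L^2}$. For the second, I write $\rho^{3-2\delta}\psi^2=\delta\,\rho^{1-\delta}\rho_x\,\psi$ and use $\psi\in L^\infty$, which follows from $\psi\in H^1$ by Sobolev embedding. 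Since $\|\psi\|_{L^2}\le C$ by \cref{lem:BD} (because $\rho^{\delta-2}\rho_x=\rho^{-1/2}\cdot\rho^{\delta-3/2}\rho_x$, with the weight controlled through $v$ and $\rho\le C_0$), I expect
\[
\|\rho^{3-2\delta}\psi^2\|_{L^2}\le C\|\psi\|_{L^2}\|\psi\|_{L^\infty}\le C(1+\|\psi_x\|_{L^2}).
\]
This step, controlling the quadratic $\psi^2$ term in $\rho_{xx}$ linearly in $\|\psi_x\|_{L^2}$, is the main obstacle. If the direct Sobolev bound loses linearity, I will instead use Gagliardo--Nirenberg, $\|\psi\|_{L^\infty}\le\|\psi\|_{L^2}^{1/2}\|\psi_x\|_{L^2}^{1/2}$, followed by Young's inequality. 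It may also be necessary to first bound $\|\psi\|_{L^2}$ via $\rho^{\delta-2}\rho_x=\alpha^{-1}(v-u)$ and the $L^\infty$ and $H^1$ norms of $u$.

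The remaining terms of $\mathcal M_x$ are easier. The term $4\rho(h\cdot k)\rho_x$ is handled as in Step 2. The term $\rho^3|k|^2$ is bounded using $\|k\|_{L^\infty}\le\|k_0\|_{L^\infty}$ and $\rho\in L^1\cap L^\infty$. The term $\rho^2h\cdot k_x$ is bounded by $C\|k\|_{H^1}$. Collecting these gives \eqref{eq:Mxpsi}.
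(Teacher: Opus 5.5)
Your Steps 1 and 2, and the structure of Step 3, follow the paper's proof. Your material-derivative forms $H_t=-u_xH-u(\mathcal M+G)$ and $G_t=-2u_xG-uG_x$ are a regrouping of \eqref{eq:Ht}--\eqref{eq:Gt}, and the bounds via \cref{lem:Mlow}, $\rho_x=\alpha^{-1}\rho^{2-\delta}(v-u)$ and $\|u\|_{L^\infty}\le C\|u\|_{H^1}$ are correct. The gap is in the step you yourself call the main obstacle: the quadratic term $\rho^{3-2\delta}\psi^2$ in $\rho_{xx}$.

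\textbf{The gap.} You claim $\|\psi\|_{L^2}\le C$ from \cref{lem:BD}, writing $\rho^{\delta-2}\rho_x=\rho^{-1/2}\cdot\rho^{\delta-3/2}\rho_x$ and saying the weight is controlled by $\rho\le C_0$. This is false.
\begin{itemize}
\item The factor $\rho^{-1/2}$ is unbounded at far-field vacuum, and $\rho\le C_0$ only bounds it from below.
\item \Cref{lem:BD} controls $\sqrt\rho\,\psi=(\delta/\alpha)\sqrt\rho\,(v-u)$, not $\psi$ itself.
\item Your fallback also fails. Writing $\psi=(\delta/\alpha)(v-u)$ and using norms of $u$ still requires $v\in L^2$. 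That does not follow from $v\in L^\infty$ on $\R$. The paper gets $\|v\|_{L^2}$ only from the separate Gronwall estimate \eqref{eq:v2exact}--\eqref{eq:v2diff}, which uses $\int_0^T\|u_x\|_{L^\infty}\,\dd t<\infty$.
\end{itemize}
So your bound $\|\rho^{3-2\delta}\psi^2\|_{L^2}\le C\|\psi\|_{L^2}\|\psi\|_{L^\infty}$ has no valid source for $\|\psi\|_{L^2}$ as written.

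\textbf{The repair.} You need neither $\|\psi\|_{L^2}$ nor $\psi_x$ for this term.
\begin{itemize}
\item By \cref{lem:vinfty} and $\|u\|_{L^\infty}\le C\|u\|_{H^1}$, the function $\psi=(\delta/\alpha)(v-u)$ is bounded in $L^\infty$ by $C_T(1+\|u\|_{H^1})$.
\item Use the factorization you already wrote, $\rho^{3-2\delta}\psi^2=\delta\rho^{1-\delta}\rho_x\,\psi$. Here $\rho^{1-\delta}\rho_x=\alpha^{-1}\rho^{3-2\delta}(v-u)$ lies in $L^2$ exactly as $\rho\rho_x$ does in your Step 2, since $6-4\delta\ge2$, $\rho\in L^1\cap L^\infty$ and $\sqrt\rho\,u\in L^2$.
\item Together these give $\|\rho^{3-2\delta}\psi^2\|_{L^2}\le C_T(1+\|u\|_{H^1})$, a low-order quantity absorbed into $C_T$.
\end{itemize}
Alternatively, quote $\|\psi\|_{L^4}\le C_T$ from \eqref{eq:v2psi4}, which is what the paper's one-line substitution tacitly relies on. With either fix, the rest of your Step 3 goes through.
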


\begin{proof}
Since $h_x=\rho k$,
\[
 H_x=|h|^2\rho_x+2\rho^2h\cdot k=\mathcal M+G,
\]
which proves the first identity.  Material differentiation and
\eqref{eq:transporthk} give $\Dt G=-2u_xG$.  Integration by parts proves
\eqref{eq:Mttrick}.  Directly,
\begin{align}
 H_t={}&-|h|^2u\rho_x-|h|^2\rho u_x
       -2u\rho^2h\cdot k,\label{eq:Ht}\\
 G_t={}&-2u\rho(h\cdot k)\rho_x-2\rho^2(h\cdot k)u_x
       -u\rho^3|k|^2-u\rho^2h\cdot k_x.\label{eq:Gt}
\end{align}
The one-dimensional product inequality, \cref{lem:kH2,lem:vinfty}, and the
energy bounds give \eqref{eq:HGtime}.  Finally,
\begin{equation}\label{eq:rhoxxpsi}
 \rho_{xx}=
 \frac1\delta\rho^{2-\delta}\psi_x
 +\frac{2-\delta}{\delta^2}\rho^{3-2\delta}\psi^2
\end{equation}
(which also gives $\rho_{xx}=\rho(\psi_x+\psi^2)$ when $\delta=1$).
Substitution in \eqref{eq:Mxrepresentation} proves \eqref{eq:Mxpsi}.
\end{proof}

\begin{proposition}[Finite-time a priori estimate]\label{prop:apriori}
Let $(\rho,u,b)$ be a regular solution on $[0,T]$ satisfying the assumptions
of \cref{thm:global-sublinear} or \cref{thm:global-linear}.  Then
\begin{align}
 &\sup_{0\leq t\leq T}\bigl(
 \norm[H^2]{\rho^{\gamma-1}}+\norm[H^1]{\psi}
 +\norm[H^2]{u}+\norm[L^2]{u_t}
 +\norm[H^2]{k}+\norm[H^2]{b}\bigr)\nonumber\\
 &\quad+\norm[L^2(0,T;H^3)]{u}
 +\norm[L^2(0,T;H^1)]{u_t}
 \leq C_T.\label{eq:apriori}
\end{align}
For $0<\delta<1$, the weighted quantities in
\eqref{eq:regvelocity} obey the same type of bound, and the quantities in
\eqref{eq:timeweighted-sublinear} are bounded by $C_T$.  For $\delta=1$,
the quantities in \eqref{eq:timeweighted-linear} are bounded by $C_T$.
\end{proposition}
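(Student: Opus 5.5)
The proof follows the Cao--Li--Zhu hierarchy, in the order announced in the introduction. Lemmas \ref{lem:densityupper}, \ref{lem:BD} and \ref{lem:vinfty} supply $\rho\le C_0$, the effective-velocity energy and $\norm[L^\infty]{v}\le C_T$. We then bound, in order: first-order density quantities, $u_t$ and $u_{xx}$, then $\phi_{xx}$, $\psi_x$, $u_{xxx}$, and finally the time-weighted quantities. The magnetic field is handled throughout through $h,k$ and the decomposition $\mathcal M=H_x-G$ of \cref{lem:source}, so that no negative power of $\rho$ appears.

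\emph{First-order density quantities.} From $\rho_x=\alpha^{-1}\rho^{2-\delta}(v-u)$ we get
\[
 \psi=\delta\alpha^{-1}(v-u),\qquad
 \phi_x=C\rho^{\gamma-\delta}(v-u).
\]
Hence $\norm[L^2]{\psi}\le C(1+\norm[L^2]{u})$ and $\norm[L^2]{\phi_x}\le C_T$. This uses $\gamma\ge\delta+\tfrac12$, which gives $\rho^{2\gamma-2\delta}\le C\rho$, so that $\sqrt\rho(v-u)\in L^2$ suffices.

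\emph{The $u$, $u_t$, $u_{xx}$ level.} Write the momentum equation as
\[
 u_t+uu_x+\phi_x+\rho^{-1}\mathcal M=\alpha\rho^{\delta-1}u_{xx}+\alpha\rho^{\delta-2}\rho_xu_x .
\]
\begin{itemize}
\item Multiply by $u$ to get $\norm[L^2]{u}$. The term $\rho^{-1}\mathcal M=|h|^2\rho^{-1}\rho_x+\rho hk$ is rewritten as $C|h|^2\rho^{1-\delta}(v-u)+\rho h\cdot k$. This lies in $L^2$ because $v\in L^\infty$ and $\rho\in L^1\cap L^\infty$.
\item Multiply by $u_{xx}$, respectively by $\rho^{1-\delta}u_t$. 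For $\delta<1$ the coefficient $\rho^{\delta-1}$ is bounded below, which yields $\norm[L^2]{u_x}$ together with the weighted quantities in \eqref{eq:regvelocity}. The drift term $\psi u_x$ is controlled by $\norm[L^\infty]{v-u}\norm[L^2]{u_x}$ plus Gagliardo--Nirenberg.
\item Differentiate in $t$ and test with $u_t$, using \eqref{eq:Mttrick} and \eqref{eq:HGtime}. This bounds $\sup_t\norm[L^2]{u_t}$ and $\int\!\!\int\rho^{\delta-1}u_{tx}^2$ without ever needing $\rho_{xx}$. The initial value is finite by the compatibility conditions \eqref{eq:datadelta}, or directly when $\delta=1$.
\item The equation then returns $\norm[L^2]{\rho^{\delta-1}u_{xx}}$, hence $\norm[H^2]{u}$. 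Gronwall closes this level.
\end{itemize}

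\emph{Second-order density quantities.} Differentiate the $v$-equation \eqref{eq:vequation}, or equivalently use $\Dt\psi=-\delta u_{xx}-\psi u_x$. This gives
\[
 \frac{\dd}{\dd t}\norm[L^2]{\psi_x}\le C\bigl(\norm[L^\infty]{u_x}\norm[L^2]{\psi_x}+\norm[L^2]{u_{xxx}}+\dots\bigr).
\]
The term $u_{xxx}$ is removed by the elliptic identity obtained by differentiating the momentum equation. That identity expresses $\alpha\rho^{\delta-1}u_{xxx}$ through $u_{tx}$, $\phi_{xx}$, $(\rho^{-1}\mathcal M)_x$ and $\psi_x u_x$, up to lower-order terms. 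The magnetic piece is controlled by \eqref{eq:Mxpsi} and by $\norm[H^1]{k}$ from \cref{lem:kH2}. It is linear in $\psi_x$, so there is no derivative loss.

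The same transport argument applied to $\phi_{xx}$ gives $\norm[H^2]{\rho^{\gamma-1}}$. The quantities $\norm[L^2]{\psi_x}$ and $\norm[L^2]{\phi_{xx}}$ are closed together by Gronwall, using $\int_0^T\norm[L^\infty]{u_x}\,\dd t\le\int_0^T\norm[H^2]{u}\,\dd t\le C_T$. Afterwards, \cref{lem:kH2} bounds $\norm[H^2]{k}$, and \eqref{eq:sourceH1} bounds $\norm[H^2]{b}$.

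\emph{Time-weighted estimates.} Multiply the $t$-differentiated equation by $t\,u_{tt}$, or by $t(\rho^{1-\delta}u_{tt})$ in the weighted case. This yields $t^{1/2}\rho^{(\delta-1)/2}u_{tx}\in L^\infty L^2$ and $t^{1/2}u_{tt}\in L^2L^2$, and the equation then gives $t^{1/2}u_{txx}$. The magnetic time derivatives are expressed through $H_t$ and $G_t$ in \eqref{eq:Ht}--\eqref{eq:Gt}, which are already bounded in $H^1$.

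\emph{Main obstacle.} The delicate step is the coupled $\psi_x$ and $u_{xxx}$ closure for $0<\delta<1$. There the singular weight $\rho^{\delta-1}$ multiplies $u_{xxx}$, and the magnetic term $|h|^2\rho^{-1}\rho_{xx}$ involves $\psi_x$ with the weight $\rho^{1-\delta}$, so the exponents must be tracked carefully. If the weights fail to match, the fallback is to test the $\psi_x$ equation against a weighted $\psi_x$, using that $\rho^{1-\delta}\le C$.
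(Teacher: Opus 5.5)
\textbf{Verdict: genuine but local gap.} Your hierarchy is the paper's. It runs: first-order velocity bounds; then $u_t$ via the dual form \eqref{eq:Mttrick}, so that $\rho_{xx}$ is never needed; then the coupled $\psi_x$/$\phi_{xx}$/$u_{xxx}$ closure through the differentiated elliptic identity; then time weights, with $\mathcal M_t$ estimated only after $\rho_{xx}$ is known. The gap is in your opening step.

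\textbf{The gap: the $L^2$ bound on $\psi$.} From $\psi=\delta\alpha^{-1}(v-u)$ you conclude $\norm[L^2]{\psi}\le C(1+\norm[L^2]{u})$. That needs $v\in L^2$, and nothing available at that point provides it. \cref{lem:vinfty} gives $v\in L^\infty$, and \cref{lem:BD} gives only $\sqrt\rho\,v\in L^2$, which is strictly weaker where $\rho\to0$ at infinity. Since $\psi$ is a multiple of $(\rho^{\delta-1})_x$ (or $(\log\rho)_x$), $\psi\in L^2$ is a real far-field property that must be propagated. And $\sup_t\norm[H^1]{\psi}$ is part of \eqref{eq:apriori}. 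The paper handles this in Step~2:
\begin{itemize}
\item test \eqref{eq:vequation} with $v$;
\item use $0\le a\in L^\infty$ and $G=\rho^2h\cdot k\in L^2$ to get \eqref{eq:v2diff};
\item close by Gronwall with $\int_0^T\norm[L^\infty]{u_x}\,\dd t\le C_T$ and $u\in L^\infty(0,T;L^2)$.
\end{itemize}
An $L^2$ energy estimate on \eqref{eq:psieq}, using $\int_0^T\norm[L^2]{\rho^{\delta-1}u_{xx}}^2\,\dd t$, also works. Either way this estimate must come \emph{after} the first-order velocity estimate, not before it. Your $\phi_x$ bound is fine, and $\psi\in L^\infty$ (from $u,v\in L^\infty$) suffices for the drift terms, so only this part of the conclusion is affected.

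\textbf{Weight slips to fix.}
\begin{itemize}
\item \emph{Lorentz term.} Dividing the momentum equation by $\rho$ produces $\mathcal M=b\cdot b_x/\rho$ itself, not $\rho^{-1}\mathcal M$. Also, the drift term is $\alpha\psi u_x=\alpha\delta\rho^{\delta-2}\rho_xu_x$. With your extra factor, the term $|h|^2\rho^{1-\delta}v$ is not in $L^2$ on the grounds you give once $\delta>\frac12$; for $\delta=1$ it is $|h|^2v$, which again needs $v\in L^2$. The correct term $\alpha^{-1}|h|^2\rho^{2-\delta}(v-u)$ is in $L^2$, because $\rho^{4-2\delta}\in L^1$ (\cref{lem:Mlow}).
\item \emph{The $\psi_x$ estimate.} For $0<\delta<1$ one has $\Dt\psi=-\delta\psi u_x-\delta\rho^{\delta-1}u_{xx}$. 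So the $\psi_x$ energy contains exactly $\rho^{\delta-1}u_{xxx}$, plus $q_xu_{xx}$ with $q_x=\frac{\delta-1}{\delta}\psi$. This is precisely what \eqref{eq:uxxxexact} isolates. The magnetic term there is $|h|^2\rho_{xx}$, not $|h|^2\rho^{-1}\rho_{xx}$, and it carries $\psi_x$ with the bounded weight $\rho^{2-\delta}$. So your ``main obstacle'' does not arise, and no weighted test is needed.
\item \emph{Differentiating the $v$-equation is a different route.} It is not equivalent to the $\psi$-transport: it gives $\Dt v_x+u_xv_x+a_x(v-u)+a(v_x-u_x)=-(\rho^2h\cdot k)_x$. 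Here $a,a_x\in L^\infty$, and there is no $u_{xx}$ or $u_{xxx}$. So once $v\in L^2$ is known, it bounds $\psi_x=\frac\delta\alpha(v_x-u_x)$ directly.
\item \emph{Multipliers.} The multipliers $\rho^{1-\delta}u_t$ and $t\rho^{1-\delta}u_{tt}$ give unweighted $u_x,u_{tx}$ and dissipation $\norm[L^2]{\rho^{(1-\delta)/2}u_t}^2$, $\norm[L^2]{\rho^{(1-\delta)/2}u_{tt}}^2$. That is the wrong way round for \eqref{eq:regvelocity} and \eqref{eq:timeweighted-sublinear}. Test with the unweighted $u_t$ and $t\,u_{tt}$, as the paper does; the principal coefficient $\rho^{\delta-1}$ then generates $\norm[L^2]{\rho^{(\delta-1)/2}u_x}^2$ and $\norm[L^2]{\rho^{(\delta-1)/2}u_{tx}}^2$.
\item \emph{Ordering of the $k$ bound.} \eqref{eq:HGtime} already needs the $H^2$ bound on $k$. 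Apply \cref{lem:kH2} immediately after the first-order velocity estimate, as in \eqref{eq:kstrong}, rather than after the second-order density step.
\end{itemize}
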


\begin{proof}
We begin by deriving the equations used in the estimates.  Since
$\phi=C_\gamma\rho^{\gamma-1}$ with
$C_\gamma=A\gamma/(\gamma-1)$, equation \eqref{eq:materialrho} gives
$\Dt\phi=-(\gamma-1)\phi u_x$.  Next,
$\psi=\delta\rho^{\delta-2}\rho_x$ and \eqref{eq:Dtrhox} give
\[
 \Dt\psi=-\delta\psi u_x-\delta\rho^{\delta-1}u_{xx}.
\]
Finally, divide the momentum equation by $\rho$.  The identities
\[
 \rho^{-1}(A\rho^\gamma)_x=\phi_x,\qquad
 \rho^{-1}(\alpha\rho^\delta u_x)_x
 =\alpha\rho^{\delta-1}u_{xx}+\alpha\psi u_x
\]
and the definition of $\mathcal M$ give the third equation below.  Thus the
exact reformulated system is
\begin{align}
 \phi_t+u\phi_x+(\gamma-1)\phi u_x&=0,\label{eq:phieq}\\
 \psi_t+(u\psi)_x+(\delta-1)\psi u_x
 +\delta\rho^{\delta-1}u_{xx}&=0,\label{eq:psieq}\\
 u_t+uu_x+\phi_x
 &=\alpha\rho^{\delta-1}u_{xx}+\alpha\psi u_x-\mathcal M.
 \label{eq:ueq}
\end{align}
For $\delta=1$, \eqref{eq:psieq} reduces to
$\psi_t+(u\psi)_x+u_{xx}=0$.  The remainder of the proof is ordered so that
each derivative is estimated only after all quantities on its right-hand
side are known.  The purely hydrodynamic product estimates coincide with
\cite[Lemmas 3.8--3.13]{CaoLiZhu2022}; below we record their resulting
inequalities and derive every additional magnetic estimate explicitly.

\emph{Step 1: unweighted and first-order velocity estimates.}
By \cref{lem:Mlow,lem:vinfty},
\begin{equation}\label{eq:Mfirst}
 \sup_{0\leq t\leq T}\norm[L^2]{\mathcal M(t)}\leq C_T.
\end{equation}
For clarity, we write the first two energy calculations.  Expanding the
viscous derivative in momentum gives the equivalent form
\begin{equation}\label{eq:uexpanded}
 u_t+uu_x+A\gamma\rho^{\gamma-2}\rho_x
 =\alpha(\rho^{\delta-1}u_x)_x
  +\alpha\rho^{\delta-2}\rho_xu_x-\mathcal M.
\end{equation}
For $0<\delta<1$, multiply \eqref{eq:uexpanded} by $u$.  With
$q=\rho^{\delta-1}$,
\begin{align}
 \frac12\frac{\dd}{\dd t}\norm[L^2]{u}^2
 +\alpha\norm[L^2]{q^{1/2}u_x}^2
 ={}&-\int u^2u_x
 -A\gamma\int\rho^{\gamma-2}\rho_xu
 +\alpha\int\rho^{\delta-2}\rho_xu_xu
 -\int\mathcal M u.
 \label{eq:uL2identity}
\end{align}
The first integral is zero.  Since
\begin{equation}\label{eq:rhox-vu}
 \alpha\rho^{\delta-2}\rho_x=v-u,
\end{equation}
the pressure integral becomes
$-(A\gamma/\alpha)\int\rho^{\gamma-\delta}(v-u)u$.
The density powers are bounded by \eqref{eq:rhoinfty}, while $v$ is bounded
by \cref{lem:vinfty}.  Also, because $\delta-1<0$,
\begin{equation}\label{eq:qcoercive}
 q=\rho^{\delta-1}\geq
 \norm[L^\infty]{\rho}^{\delta-1}>0.
\end{equation}
This lower bound comes from an upper bound for $\rho$; it does not assume
that the density stays away from vacuum.  Using
\eqref{eq:rhox-vu}, \eqref{eq:qcoercive}, and Young's inequality, the last
three integrals in \eqref{eq:uL2identity} satisfy
\begin{align}
 \left|A\gamma\int\rho^{\gamma-2}\rho_xu\right|
 &\leq C_T(1+\norm[L^2]{u}^2),\label{eq:uL2pressure}\\
 \left|\alpha\int\rho^{\delta-2}\rho_xu_xu\right|
 &\leq\frac\alpha4\norm[L^2]{q^{1/2}u_x}^2
      +C_T\norm[L^2]{u}^2,\label{eq:uL2visccross}\\
 |\ip{\mathcal M}{u}|
 &\leq C_T(1+\norm[L^2]{u}^2).
 \label{eq:newlow1}
\end{align}
Gronwall's lemma yields the zeroth-order part of \eqref{eq:firstorder}.

For the first derivative, multiply \eqref{eq:uexpanded} by $u_t$ and
integrate the principal viscous term by parts.  One obtains
\begin{align}
 \frac\alpha2\frac{\dd}{\dd t}\int q u_x^2\,\dd x
 +\int u_t^2\,\dd x
 ={}&\frac\alpha2\int q_tu_x^2\,\dd x
 -\int uu_xu_t\,\dd x
 -A\gamma\int\rho^{\gamma-2}\rho_xu_t\,\dd x\nonumber\\
 &+\alpha\int\rho^{\delta-2}\rho_xu_xu_t\,\dd x
 -\int\mathcal M u_t\,\dd x.
 \label{eq:utfirstidentity}
\end{align}
The last term is genuinely new, but \eqref{eq:Mfirst} gives
\begin{equation}\label{eq:newlow2}
 |\ip{\mathcal M}{u_t}|
 \leq\frac18\norm[L^2]{u_t}^2+C_T.
\end{equation}
For reference, the remaining four estimates are
\begin{align}
 \frac\alpha2\left|\int q_tu_x^2\right|
 &\leq C_T\bigl(1+\norm[L^2]{q^{1/2}u_x}^{10/3}\bigr)
       +\frac18\norm[L^2]{u_t}^2,\label{eq:qtestimate}\\
 \left|\int uu_xu_t\right|
 &\leq C_T\norm[L^2]{q^{1/2}u_x}^{3}
       +\frac18\norm[L^2]{u_t}^2,\label{eq:convutestimate}\\
 \left|A\gamma\int\rho^{\gamma-2}\rho_xu_t\right|
 &\leq C_T+\frac18\norm[L^2]{u_t}^2,\label{eq:pressutestimate}\\
 \left|\alpha\int\rho^{\delta-2}\rho_xu_xu_t\right|
 &\leq C_T(1+\norm[L^2]{u_x}^2)
       \norm[L^2]{q^{1/2}u_x}^2
       +\frac18\norm[L^2]{u_t}^2.\label{eq:crossutestimate}
\end{align}
Here \eqref{eq:qtestimate} follows from
$q_t+uq_x+(\delta-1)qu_x=0$; the other lines use
\eqref{eq:rhox-vu}, the one-dimensional inequality
$\norm[L^\infty]{f}^2\leq2\norm[L^2]{f}\norm[L^2]{f_x}$, and Young's
inequality.  The elliptic form of \eqref{eq:uexpanded} simultaneously gives
\begin{equation}\label{eq:uxxelliptic-low}
 \norm[L^2]{q u_{xx}}
 \leq C_T\bigl(1+\norm[L^2]{u_t}+\norm[L^2]{u_x}^{3/2}\bigr).
\end{equation}
Substitution of \eqref{eq:newlow2}--\eqref{eq:crossutestimate} into
\eqref{eq:utfirstidentity} can now be closed without assuming a small
initial norm.  To see this point explicitly, put
$y=\norm[L^2]{q^{1/2}u_x}^2$.  The zeroth-order estimate gives
$y\in L^1(0,T)$, while \eqref{eq:qcoercive} gives
$\norm[L^2]{u_x}^2\leq C_Ty$.  The largest powers on the right are
therefore
\[
 y^{5/3}=y\,y^{2/3},\qquad y^{3/2}=y\,y^{1/2},\qquad y^2=y\,y.
\]
Their second factors belong to $L^1(0,T)$ because $y\in L^1(0,T)$ and
the time interval is finite.  Thus the differential inequality has the
linear Gronwall form $y'\leq C_T+a(t)y$ with $a\in L^1(0,T)$, after the
$u_t$ terms have been absorbed.  Gronwall's lemma gives, for
$0<\delta<1$,
\begin{align}
 &\sup_{0\leq t\leq T}
 \left(\norm[L^2]{u}^2+
 \norm[L^2]{\rho^{(\delta-1)/2}u_x}^2\right)\nonumber\\
 &\quad+\int_0^T\left(
 \norm[L^2]{\rho^{(\delta-1)/2}u_x}^2
 +\norm[L^2]{u_t}^2+\norm[L^2]{\rho^{\delta-1}u_{xx}}^2
 +\norm[L^\infty]{u_x}\right)\dd t\leq C_T.
 \label{eq:firstorder}
\end{align}
When $\delta=1$, $q=1$ and $q_t=0$.  The same calculation therefore becomes
shorter: all density weights in \eqref{eq:firstorder} are absent and the
first term on the right of \eqref{eq:utfirstidentity} vanishes.  The elliptic
form of \eqref{eq:ueq}, \eqref{eq:Mfirst}, and the one-dimensional
interpolation inequality also imply
\begin{equation}\label{eq:uxxintegral}
 \int_0^T\norm[L^2]{u_{xx}}^2\,\dd t\leq C_T.
\end{equation}
No pointwise-in-time bound for $u_t$ is asserted at this stage.  Equations
\eqref{eq:firstorder}--\eqref{eq:uxxintegral} and \cref{lem:kH2} now yield
\begin{equation}\label{eq:kstrong}
 \sup_{0\leq t\leq T}\norm[H^2]{k(t)}\leq C_T.
\end{equation}

\emph{Step 2: $L^2$ effective velocity and low-order $\psi$.}
Set
\[
 a=\frac{A\gamma}{\alpha}\rho^{\gamma-\delta}
   +\frac{|h|^2}{\alpha}\rho^{2-\delta},
 \qquad G=\rho^2h\cdot k.
\]
Multiplication of \eqref{eq:vequation} by $v$ gives
\begin{equation}\label{eq:v2exact}
 \frac12\frac{\dd}{\dd t}\norm[L^2]{v}^2
 +\int_\R a|v|^2\,\dd x
 =\frac12\int_\R u_x|v|^2\,\dd x
  +\int_\R auv\,\dd x-\int_\R Gv\,\dd x.
\end{equation}
Here we used
$\int uvv_x=-\frac12\int u_xv^2$.  Since $a\in L^\infty$, $a\geq0$,
and $G\in L^2$, Young's inequality gives
\begin{equation}\label{eq:v2diff}
 \frac{\dd}{\dd t}\norm[L^2]{v}^2
 \leq C_T(1+\norm[L^\infty]{u_x})\norm[L^2]{v}^2
      +C_T(1+\norm[L^2]{u}^2).
\end{equation}
The coefficient $\norm[L^\infty]{u_x}$ is integrable in time by
\eqref{eq:firstorder}, so Gronwall controls $v$ in $L^2$.  Since
$v-u=(\alpha/\delta)\psi$, this and the $L^2$ estimate for $u$ control
$\psi$ in $L^2$.  Moreover, \eqref{eq:qcoercive} (or $q=1$) gives
$u\in L^\infty(0,T;H^1)$ and hence $u\in L^\infty((0,T)\times\R)$.
Together with \eqref{eq:vinfty}, interpolation
$\norm[L^4]{\psi}^4\leq\norm[L^\infty]{\psi}^2
\norm[L^2]{\psi}^2$ proves
\begin{equation}\label{eq:v2psi4}
 \sup_{0\leq t\leq T}
 \left(\norm[L^2]{v}+\norm[L^2]{\psi}+\norm[L^4]{\psi}\right)\leq C_T,
\end{equation}
because $v-u=(\alpha/\delta)\psi$.

\emph{Step 3: pointwise $u_t$ and second derivatives.}
Differentiate \eqref{eq:ueq} in time and test by $u_t$.  The only term not
present in the hydrodynamic identity is $-\int\mathcal M_tu_t$.  Before
$\rho_{xx}$ is available, use \eqref{eq:Mttrick}.  By
\eqref{eq:Ht}--\eqref{eq:Gt}, \eqref{eq:firstorder},
\eqref{eq:kstrong}, and \eqref{eq:v2psi4},
\begin{align*}
 \left|\int H_tu_{tx}\,\dd x\right|
 &\leq\norm[L^2]{\rho^{(1-\delta)/2}H_t}
       \norm[L^2]{\rho^{(\delta-1)/2}u_{tx}},\\
 \left|\int G_tu_t\,\dd x\right|
 &\leq\norm[L^2]{G_t}\norm[L^2]{u_t}.
\end{align*}
The density upper bound controls the dual factor
$\rho^{(1-\delta)/2}$ in the first line.  Young's inequality therefore
gives
\begin{equation}\label{eq:Mtcontrol}
 \left|\int_\R\mathcal M_tu_t\,\dd x\right|
 \leq\varepsilon\norm[L^2]{\rho^{(\delta-1)/2}u_{tx}}^2
 +C_{\varepsilon,T}\bigl(1+\norm[L^2]{u_t}^2\bigr)
\end{equation}
for $0<\delta<1$; omit the weight for $\delta=1$.  Notice that the bounded
object is the \emph{dual} factor $\rho^{(1-\delta)/2}$, not the singular
weight $\rho^{(\delta-1)/2}$ itself.

For completeness, the remaining terms lead to
\begin{align}
 &\frac{\dd}{\dd t}\norm[L^2]{u_t}^2
 +\norm[L^2]{\rho^{(\delta-1)/2}u_{tx}}^2\leq C_T\left[
 1+\bigl(1+\norm[L^2]{u_{xx}}^2\bigr)\norm[L^2]{u_t}^2
 +\norm[L^2]{\rho^{\delta-1}u_{xx}}^2\right].
 \label{eq:ut-differential}
\end{align}
To see where this comes from, the differentiated convection is
$u_tu_x+uu_{tx}$, the pressure term is integrated by parts as
$\int\phi_tu_{tx}$, and the differentiated $\psi u_x$ term is estimated
using \eqref{eq:v2psi4}.  The one-dimensional interpolation inequality and
Young's inequality absorb one quarter of the weighted $u_{tx}$ norm.  These
are exactly the hydrodynamic products in
\cite[Lemma 3.11]{CaoLiZhu2022}; \eqref{eq:Mtcontrol} shows that the
Lorentz force fits into the same differential inequality without using
$\rho_{xx}$.

At the initial time,
\begin{equation}\label{eq:utinitial}
 u_t(0)=-u_0u_{0x}-\phi_{0x}
 +\alpha\rho_0^{\delta-1}u_{0xx}
 +\alpha\psi_0u_{0x}-\mathcal M_0.
\end{equation}
In the sublinear regime, \eqref{eq:datadelta} gives
$\alpha\rho_0^{\delta-1}u_{0xx}=g_2$, while the factorization of $u_{0x}$
there makes $\psi_0u_{0x}$ square integrable.  In the $\delta=1$ regime,
the terms are controlled directly by $u_0\in H^2$ and
$(\log\rho_0)_x\in H^1$.  Moreover,
$\mathcal M_0\in L^2$ follows from \cref{lem:Mlow} (or directly from
\eqref{eq:Mrepresentation}).  Hence
$\limsup_{\tau\downarrow0}\norm[L^2]{u_t(\tau)}\leq C$.

Integrate \eqref{eq:ut-differential} over $(\tau,t)$, let
$\tau\downarrow0$, and use \eqref{eq:firstorder} to see that every
coefficient on its right is time integrable.  Gronwall's lemma, followed by
the elliptic estimate \eqref{eq:uxxelliptic-low}, yields
\begin{align}
 &\sup_{0\leq t\leq T}\left(
 \norm[L^2]{u_t}^2+\norm[L^2]{u_{xx}}^2
 +\norm[L^2]{\rho^{\delta-1}u_{xx}}^2\right)
+\int_0^T
 \norm[L^2]{\rho^{(\delta-1)/2}u_{tx}}^2\,\dd t\leq C_T.
 \label{eq:secondorder}
\end{align}
Again the weights disappear when $\delta=1$.

\emph{Step 4: the coupled $(\phi_{xx},\psi_x,u_{xxx})$ estimate.}
Writing $q=\rho^{\delta-1}$, differentiation of the elliptic identity
\eqref{eq:ueq} gives
\begin{equation}\label{eq:uxxxexact}
 \alpha q u_{xxx}
 =u_{tx}+(uu_x)_x+\phi_{xx}
 -\alpha q_xu_{xx}-\alpha(\psi u_x)_x+\mathcal M_x,
 \qquad q_x=\frac{\delta-1}{\delta}\psi.
\end{equation}
We explain the absorption in this elliptic estimate.  First,
\[
 (\psi u_x)_x=\psi_xu_x+\psi u_{xx},\qquad
 (uu_x)_x=u_x^2+uu_{xx}.
\]
The terms containing $\psi_x$ are bounded by
$\norm[L^2]{\psi_x}\norm[L^\infty]{u_x}$.  The term with no known
$L^\infty$ bound for $\psi$ is handled by interpolation:
\begin{align}
 \norm[L^2]{\psi u_{xx}}
 &\leq\norm[L^4]{\psi}\norm[L^4]{u_{xx}}\nonumber\\
 &\leq C\norm[L^4]{\psi}\norm[L^2]{u_{xx}}^{3/4}
       \norm[L^2]{u_{xxx}}^{1/4}
 \leq\varepsilon\norm[L^2]{u_{xxx}}+C_{\varepsilon,T}.
 \label{eq:psiuxxabsorb}
\end{align}
Because $q_x=((\delta-1)/\delta)\psi$, the same estimate treats
$q_xu_{xx}$.  The convection terms are easier, since Step~3 controls
$u$ in $H^2$.  Finally, $q$ has the lower bound
\eqref{eq:qcoercive} when $0<\delta<1$ and equals one when $\delta=1$.
Choose $\varepsilon$ small enough to move the last term in
\eqref{eq:psiuxxabsorb} to the left.  The additional force
$\mathcal M_x$ is controlled by \eqref{eq:Mxpsi}; hence
\begin{align}
 &\norm[L^2]{u_{xxx}}+
 \norm[L^2]{\rho^{\delta-1}u_{xxx}}
\leq C_T\left(1+\norm[L^2]{u_{tx}}
 +\norm[L^2]{\phi_{xx}}+\norm[L^2]{\psi_x}\right).
 \label{eq:ellipticthird}
\end{align}
For $0<\delta<1$ one also obtains the intermediate
$\rho^{(\delta-1)/2}u_{xxx}$ bound.  Applying $\partial_x^2$ to
\eqref{eq:phieq} and $\partial_x$ to \eqref{eq:psieq}, and testing by
$\phi_{xx}$ and $\psi_x$, respectively, gives
\begin{align}
 \frac12\frac{\dd}{\dd t}\norm[L^2]{\phi_{xx}}^2
 ={}&-\left(\gamma+\frac12\right)
       \int_\R u_x|\phi_{xx}|^2\,\dd x
 -(2\gamma-1)\int_\R u_{xx}\phi_x\phi_{xx}\,\dd x
 -(\gamma-1)\int_\R\phi u_{xxx}\phi_{xx}\,\dd x,
 \label{eq:phixxidentity}\\
 \frac12\frac{\dd}{\dd t}\norm[L^2]{\psi_x}^2
 ={}&-\left(\delta+\frac12\right)
       \int_\R u_x|\psi_x|^2\,\dd x
 -(2\delta-1)\int_\R\psi u_{xx}\psi_x\,\dd x
 -\delta\int_\R\rho^{\delta-1}u_{xxx}\psi_x\,\dd x.
 \label{eq:psixidentity}
\end{align}
These identities follow only from the product rule and one integration by
parts.  For example,
\[
 \left|\int\phi u_{xxx}\phi_{xx}\right|
 \leq C_T\bigl(\norm[L^2]{u_{xxx}}^2
                +\norm[L^2]{\phi_{xx}}^2\bigr),
\]
and
\[
 \left|\int\psi u_{xx}\psi_x\right|
 \leq\norm[L^4]{\psi}\norm[L^4]{u_{xx}}
       \norm[L^2]{\psi_x}
 \leq\varepsilon\norm[L^2]{u_{xxx}}^2
   +C_{\varepsilon,T}(1+\norm[L^2]{\psi_x}^2).
\]
The remaining products are estimated in the same way, using the already
known $H^2$ bound for $u$ and the $L^4$ bound for $\psi$.  Consequently,
Consequently,
\begin{align}
 \frac{\dd}{\dd t}\norm[L^2]{\phi_{xx}}^2
 &\leq C_T\left(1+\norm[L^2]{\phi_{xx}}^2+
                    \norm[L^2]{u_{xxx}}^2\right),\label{eq:phixxenergy}\\
 \frac{\dd}{\dd t}\norm[L^2]{\psi_x}^2
 &\leq C_T\left(1+\norm[L^2]{\psi_x}^2+
                    \norm[L^2]{\rho^{\delta-1}u_{xxx}}^2\right).
 \label{eq:psixenergy}
\end{align}
Substitute \eqref{eq:ellipticthird} into
\eqref{eq:phixxenergy}--\eqref{eq:psixenergy}.  The resulting right-hand
side contains only the two unknown energies
$\norm[L^2]{\phi_{xx}}^2+\norm[L^2]{\psi_x}^2$, an integrable forcing
$\norm[L^2]{u_{tx}}^2$, and constants already controlled.  This ordering is
what removes the apparent circularity between $u_{xxx}$ and $\psi_x$.
Integrating the $u_{tx}$ term with \eqref{eq:secondorder} and applying
Gronwall yield
\begin{align}
 &\sup_{0\leq t\leq T}
 \left(\norm[L^2]{\phi_{xx}}^2+\norm[L^2]{\psi_x}^2\right)
+\int_0^T\left(\norm[L^2]{u_{xxx}}^2+
 \norm[L^2]{\rho^{\delta-1}u_{xxx}}^2\right)\dd t\leq C_T.
 \label{eq:highdensity}
\end{align}
The standard transport estimates then give
$\phi_{tx},\psi_t\in L^\infty(0,T;L^2)$ and
$\phi_{tt}\in L^2(0,T;L^2)$.  Formula \eqref{eq:rhoxxpsi} gives
$\rho\in L^\infty(0,T;H^2)$; \cref{lem:force} gives
$b\in L^\infty(0,T;H^2)$.

\emph{Step 5: time weights.}
Only now do we estimate $\mathcal M_t$ directly.  Step~4 has supplied
$\psi_x$ and therefore $\rho_{xx}$ through \eqref{eq:rhoxxpsi}; attempting
this direct estimate in Step~3 would have been circular.  Differentiation of
\eqref{eq:Mrepresentation} gives
\begin{align}
 \mathcal M_t={}&2h\cdot h_t\,\rho_x+|h|^2\rho_{xt}
 +2\rho\rho_t h\cdot k+\rho^2h_t\cdot k
 +\rho^2h\cdot k_t,\label{eq:Mtexact}\\
 \rho_{xt}={}&-u\rho_{xx}-2u_x\rho_x-\rho u_{xx},
 \qquad h_t=-u\rho k,\qquad k_t=-uk_x.
 \label{eq:sourcetime}
\end{align}
Consequently,
\begin{equation}\label{eq:Mtstrong}
 \sup_{0\leq t\leq T}\norm[L^2]{\mathcal M_t(t)}\leq C_T.
\end{equation}
For instance,
$\norm[L^2]{|h|^2\rho_{xt}}$ is controlled by
\eqref{eq:sourcetime}, the $H^2$ bounds for $u$ and $\rho$, and the density
upper bound.  The factors $h_t=-u\rho k$ and $k_t=-uk_x$ are controlled by
$u\in H^2$, $h\in L^\infty$, and $k\in H^2$.  The remaining products in
\eqref{eq:Mtexact} have the same or lower order.

Differentiate \eqref{eq:ueq} in time and test by $u_{tt}$.  The actual new
term satisfies
\begin{equation}\label{eq:Mutt}
 |\ip{\mathcal M_t}{u_{tt}}|
 \leq\tfrac14\norm[L^2]{u_{tt}}^2+C_T.
\end{equation}
The time-differentiated energy inequality therefore has the form
\begin{equation}\label{eq:uttenergy}
 \frac{\dd}{\dd t}
 \norm[L^2]{\rho^{(\delta-1)/2}u_{tx}}^2
 +\norm[L^2]{u_{tt}}^2
 \leq C_T\left(1+
 \norm[L^2]{\rho^{(\delta-1)/2}u_{tx}}^2
 +\norm[L^2]{\mathcal M_t}^2\right).
\end{equation}
Write
$Y(t)=\norm[L^2]{\rho^{(\delta-1)/2}u_{tx}(t)}^2$ and let $Z$ denote the
$u_{tt}$ dissipation.  Multiply \eqref{eq:uttenergy} by $t$ and integrate.
The lower endpoint needs a short limiting argument because $Y(0)$ need not
be defined.  Since $Y\in L^1(0,T)$ by \eqref{eq:secondorder}, there is a
sequence $\tau_j\downarrow0$ such that
$\tau_jY(\tau_j)\to0$.  Integrating over $(\tau_j,t)$ gives
\[
 \int_{\tau_j}^t sY'(s)\,\dd s
 =tY(t)-\tau_jY(\tau_j)-\int_{\tau_j}^tY(s)\,\dd s.
\]
Letting $j\to\infty$ is the reason no value of $u_{tx}(0)$ is required.
The unweighted integral of $Y$ is already finite, and
\eqref{eq:Mtstrong} controls the remaining source.  Gronwall's lemma
therefore gives
\begin{align}
 &t\norm[L^2]{\rho^{(\delta-1)/2}u_{tx}(t)}^2
+\int_0^t s\left(
 \norm[L^2]{u_{tt}}^2+\norm[L^2]{u_{txx}}^2\right)\dd s\leq C_T.
 \label{eq:timeweightcorrect}
\end{align}
To recover $u_{txx}$, differentiate the elliptic momentum identity in time:
\begin{align}
 \alpha\rho^{\delta-1}u_{txx}
 ={}&u_{tt}+(uu_x)_t+\phi_{tx}
 -\alpha(\rho^{\delta-1})_tu_{xx}
 -\alpha\psi_tu_x-\alpha\psi u_{tx}+\mathcal M_t.
 \label{eq:utxxexact}
\end{align}
The only additional right-hand side is $\mathcal M_t$, controlled by
\eqref{eq:Mtstrong}; every other factor has already been estimated in
Steps~3--4.  Since $\rho^{\delta-1}$ has the positive lower bound
\eqref{eq:qcoercive} for $0<\delta<1$, \eqref{eq:utxxexact} gives the
unweighted $u_{txx}$ norm appearing in \eqref{eq:timeweightcorrect}.  For
$\delta=1$ the coefficient is simply one.  The weighted
spatial estimates in \eqref{eq:regvelocity} follow from the same elliptic
identities; the magnetic factor
\begin{equation}\label{eq:weightedM}
 \rho^{1-\delta}\mathcal M
 =\frac{|h|^2}{\delta}\rho^{3-2\delta}\psi
  +\rho^{3-\delta}h\cdot k
\end{equation}
contains no negative power of $\rho$.  In fact, differentiating this
identity, using $h_x=\rho k$, and applying the density upper bound gives
\begin{equation}\label{eq:weightedMH1}
 \norm[H^1]{\rho^{1-\delta}\mathcal M}
 \leq C_T\bigl(1+\norm[H^1]{\psi}+\norm[H^1]{k}\bigr),
\end{equation}
because $3-2\delta\geq1$ and $3-\delta\geq2$.  Thus the weighted elliptic
recoveries introduce no singular coefficient.  Combining Steps 1--5 proves
\eqref{eq:apriori}.
\end{proof}

\begin{remark}[The exponent thresholds]\label{rem:exponents}
The magnetic estimates use only $0<\delta\leq1$ and the density upper bound.
The restrictions $\gamma\geq\delta+\tfrac12$ and $\gamma>\tfrac32$ enter in
\eqref{eq:vinfty}, exactly where they enter the hydrodynamic proof.  The
magnetic coefficient in \eqref{eq:vequation} is nonnegative and has the more
favorable exponent $2-\delta$.
\end{remark}

\section{Local solvability and continuation}\label{sec:local}

The local construction cannot be obtained by applying the hydrodynamic
theorem verbatim, because $h$ and $k$ must be approximated while preserving
$h_x=\rho k$.  We therefore record the augmented construction.

\begin{proposition}[Local existence and continuation]\label{prop:local}
Under the assumptions of \cref{thm:global-sublinear} or
\cref{thm:global-linear}, there is $T_*>0$ and a unique
regular solution of \eqref{eq:system} on $[0,T_*]$.  Let $T^{\max}$ be its
maximal time.  If $T^{\max}<\infty$ and
\begin{equation}\label{eq:continuationquantity}
 \sup_{t<T^{\max}}\left(
 \norm[H^2]{\rho^{\gamma-1}}+\norm[H^1]{\psi}
 +\norm[H^2]{u}+\norm[L^2]{u_t}
 +\norm[L^\infty]{h}+\norm[H^2]{k}\right)<\infty,
\end{equation}
including the weighted norms in \eqref{eq:regvelocity} when
$0<\delta<1$, then the solution extends beyond $T^{\max}$.
\end{proposition}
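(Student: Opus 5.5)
The plan is to build the local solution from a positive-density approximation in which the magnetic variables are carried in the transported form $(h,k)$ rather than through $b$, and then to take limits. I would fix $\epsilon\in(0,1)$ and set $\rho_0^\epsilon=\rho_0+\epsilon\eta$, where $\eta>0$ is smooth, equals one outside a large compact set, and is adjusted so that $(\rho_0^\epsilon)^{\gamma-1}$ and the weighted data in \eqref{eq:datadelta} converge. The magnetic profile $k_0$ is kept unchanged; it has compact support by \eqref{eq:datamagnetic-sublinear}. I would then define $h_0^\epsilon=h_0(-\infty)+\int_{-\infty}^x\rho_0^\epsilon k_0\,\dd y$. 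This keeps the constraint $h_x=\rho k$ exactly, and it is the only place where $\operatorname{supp}k_0\Subset\R$ is used: it makes $h_0^\epsilon$ constant on the tails, so $b_0^\epsilon=\rho_0^\epsilon h_0^\epsilon$ is a bounded perturbation of a constant state. For $\rho\geq c_\epsilon>0$ the system \eqref{eq:phieq}--\eqref{eq:ueq}, with $\Dt h=\Dt k=0$, is a uniformly parabolic equation for $u$ coupled to transport equations. Standard linearization and contraction in the positive-background setting then gives a local solution $(\rho^\epsilon,u^\epsilon,h^\epsilon,k^\epsilon)$. By \cref{lem:commute} this solution preserves $k=\cD h$ and $\Dt k=0$.

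The core step is a short-time a priori bound uniform in $\epsilon$. On an interval $[0,T_*]$ I would define an energy $\Phi(t)$ equal to the left side of \eqref{eq:continuationquantity}, plus the weighted norms in \eqref{eq:regvelocity} and $\int_0^t(\norm[H^3]{u}^2+\norm[H^1]{u_t}^2)$. I would then rerun the differential inequalities of \cref{prop:apriori}, Steps 1--5, without using the global lemmas \cref{lem:vinfty} and \cref{lem:weightedpointwise}. The coefficients would be bounded directly by polynomials in $\Phi$. Every magnetic term is controlled by \cref{lem:kH2}, \cref{lem:force}, \cref{lem:source} and \eqref{eq:weightedMH1}, and these contain no negative power of $\rho$. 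This gives $\Phi(t)\leq C_0+C\int_0^t P(\Phi)$, so there is a $T_*$ depending only on the data norms with $\Phi\leq 2C_0$.

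The lower bound \eqref{eq:qcoercive} needs care here. It uses only the upper bound of $\rho$, and \cref{lem:densityupper} supplies that bound uniformly in $\epsilon$ once the approximate far field is treated relative to the constant background. I expect this step, closing the estimates uniformly near the artificial floor with the weighted compatibility quantities $g_1,g_2$, to be the main obstacle. I would first try to check that $\rho_0^\epsilon$ can be chosen so that $(\rho_0^\epsilon)^{(\delta-1)/2}u_{0x}$ and $(\rho_0^\epsilon)^{\delta-1}u_{0xx}$ stay bounded in $L^2$. Failing that, I would mollify $u_0$ in a way adapted to $\rho_0^\epsilon$.

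Next I would pass to the limit. Aubin--Lions compactness and the uniform bounds give a limit $(\rho,u,h,k)$ on $[0,T_*]$. The relations $h_x=\rho k$ and $\Dt k=0$ pass to the limit, and $b=\rho h$ solves the induction equation. Time continuity in the stated spaces follows from the transport structure together with the usual weak-continuity plus norm-continuity argument. Uniqueness comes from an $L^2$ difference estimate for $(\rho_1-\rho_2,u_1-u_2,h_1-h_2)$ with the weights $\rho^{\delta-1}$, done in Lagrangian mass coordinates where $h$ is time independent. In those coordinates the magnetic difference reduces to a difference of $\rho$ multiplied by bounded factors. Finally, for continuation: if \eqref{eq:continuationquantity} is finite, then at any $t_0<T^{\max}$ the data satisfy the hypotheses of the theorems with norms bounded independently of $t_0$. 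The compatibility quantities are $g_1=\rho^{(\delta-1)/2}u_x$ and $g_2=\alpha\rho^{\delta-1}u_{xx}$, which are among the weighted norms. The local existence time $T_*$ depends only on these norms, so restarting at $t_0>T^{\max}-T_*$ extends the solution past $T^{\max}$.
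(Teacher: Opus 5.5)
Your outline is correct and has the same architecture as the paper's proof. Both arguments use a positive-background approximation in which $h$ and $k$ are transported separately and $h_x=\rho k$ holds exactly. Both obtain a uniform short-time bound by rerunning the hierarchy of \cref{prop:apriori}, with every Lorentz term written in nonnegative powers of $\rho$, and then pass to the limit by compactness, prove a difference estimate, and restart.

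The genuine differences are in three places.
\begin{enumerate}[label=\textup{(\roman*)},leftmargin=2.2em]
\item \emph{Approximation.} You perturb $\rho_0$ additively, keep $k_0$, and rebuild $h_0^\epsilon$ from the constraint. The paper keeps $h_0$ and sets $k_0^\eta=(\rho_0/\rho_0^\eta)k_0$. Your choice makes the $H^2$ bound on $k$ trivial, with no lower bound of $\rho_0$ on $\operatorname{supp}k_0$ required. The paper's choice keeps the tail profile $\bar h$ fixed.
\item \emph{Uniqueness.} This is the more substantial difference: you work in Lagrangian mass coordinates. There $h(t,y)=h_0(y)$ is the same for both solutions, so the Lorentz force becomes a label-dependent pressure $A\rho^\gamma+\frac12|h_0(y)|^2\rho^2$. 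Density comparability is also immediate along common labels. This removes the transported differences $z,\ell$, the Eulerian comparability argument \eqref{eq:app-density-comparability}, and the localization \eqref{eq:app-local-w-control} on the support of $k$. Note that $h_1-h_2\equiv0$ in these coordinates, so you need not carry it. The paper's Eulerian route, in exchange, stays in the variables of the existence proof and uses its vacuum-weighted functional directly.
\item \emph{Continuation.} You restart at $t_0<T^{\max}$ with a uniform $T_*$. This avoids the weak terminal traces the paper invokes.
\end{enumerate}

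Two of your side remarks should be corrected.

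First, the step you single out as the main obstacle is immediate. Since $\rho_0^\epsilon\geq\rho_0$ and $\delta<1$, one has pointwise $(\rho_0^\epsilon)^{(\delta-1)/2}|u_{0x}|\leq|g_1|$ and $\alpha(\rho_0^\epsilon)^{\delta-1}|u_{0xx}|\leq|g_2|$. Dominated convergence then gives uniform bounds and convergence with $u_0$ unchanged. This is exactly \eqref{eq:app-g-approx}--\eqref{eq:app-g-uniform}, and no mollification is needed.

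Second, \cref{lem:densityupper} is not available for the approximate problem. With a positive far-field density the mass and energy are infinite, so $\xi$ is not bounded by $(2M_0E(0))^{1/2}$. No such lemma is needed, because inside the bootstrap $\sup\rho$ is controlled by $\Phi$ itself. The characteristic formula gives $\rho\leq\norm[L^\infty]{\rho_0^\epsilon}\exp\bigl(C\int_0^t\norm[H^2]{u}\bigr)$. Alternatively, use the $H^2$ norm of $\rho^{\gamma-1}$ measured relative to its background. $\Phi$ must contain that relative norm anyway, since $\rho^{\gamma-1}\notin L^2$ at positive background; the paper uses $\norm[H^2]{\phi-\eta}$.
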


\begin{proof}
The singular-parabolic local construction for $(\phi,u,\psi)$ is the one in
\cite{XinZhu2021,CaoLiZhu2022}; for $\delta=1$ one may equivalently use the
shallow-water construction in \cite{LiPanZhu2017}.  We verify that its
positive-background approximation and difference estimates remain uniform
after the two magnetic transport equations are added.  The
constraint-preserving Picard map, the explicit energy spaces, the compactness
argument, and the complete augmented difference estimate are recorded in
\ref{app:local-construction}.

\emph{Step 1: compatible positive-background data on $\R$.}
Put $C_\gamma=A\gamma/(\gamma-1)$ and, for $\eta\in(0,1)$, define
\begin{equation}\label{eq:etaapprox}
 \phi_0^\eta=\phi_0+\eta,\qquad
 \rho_0^\eta=(C_\gamma^{-1}\phi_0^\eta)^{1/(\gamma-1)},\qquad
 \psi_0^\eta=\delta(\rho_0^\eta)^{\delta-2}(\rho_0^\eta)_x.
\end{equation}
The fluid compatibility data are approximated exactly as in
\cite[Section 2.1]{CaoLiZhu2022}.  For the magnetic variables set
\begin{equation}\label{eq:magapprox}
 h_0^\eta=h_0,\qquad
 k_0^\eta=\frac{\rho_0}{\rho_0^\eta}k_0.
\end{equation}
Then $(h_0^\eta)_x=\rho_0^\eta k_0^\eta$.  Because $k_0$ is supported in a
fixed compact set on which $\rho_0$ has a positive minimum, composition and
product estimates give
\begin{equation}\label{eq:uniformkapprox}
 \sup_{0<\eta<1}\norm[H^2]{k_0^\eta}\leq C\norm[H^2]{k_0},\qquad
 k_0^\eta\longrightarrow k_0\quad\text{in }H^2.
\end{equation}
Thus no spatial truncation or artificial boundary condition is introduced.

\emph{Step 2: the approximate augmented system.}
For fixed $\eta$, the density has a positive far-field background.  Solve
\eqref{eq:phieq}--\eqref{eq:ueq} on $\R$ together with
\begin{equation}\label{eq:augtransport}
 h_t+uh_x=0,\qquad k_t+uk_x=0.
\end{equation}
For $0<\delta<1$, writing
\[
 e=\frac{\delta-1}{2(\gamma-1)}<0,\qquad
 a=C_\gamma^{(1-\delta)/(\gamma-1)},
\]
one has $\rho^{\delta-1}=a\phi^{2e}$.  Hence
\eqref{eq:phieq}--\eqref{eq:psieq} have exactly the singular-parabolic
principal part used in \cite{XinZhu2021,CaoLiZhu2022}.  The new force is
\begin{equation}\label{eq:Mpsi}
 \mathcal M=\frac{|h|^2}{\delta}\rho^{2-\delta}\psi
              +\rho^2h\cdot k.
\end{equation}
For each fixed $\eta$ it is smooth in the approximate variables.  More
importantly, all density powers in \eqref{eq:Mpsi} are nonnegative, so the
following estimates do not involve $\inf\rho^\eta$:
\begin{align}
 \norm[L^2]{\mathcal M}
 &\leq C_R\bigl(1+\norm[L^2]{\psi}+\norm[L^2]{k}\bigr),\label{eq:Mlocal0}\\
 \norm[H^1]{\mathcal M}
 &\leq C_R\bigl(1+\norm[H^1]{\psi}+\norm[H^2]{k}\bigr),\label{eq:Mlocal1}
\end{align}
whenever
$\norm[H^2]{\phi-\eta}+\norm[H^2]{u}+\norm[L^\infty]{h}\leq R$.
These follow by differentiating \eqref{eq:Mpsi}, using
$h_x=\rho k$ and
$\rho_x=\delta^{-1}\rho^{2-\delta}\psi$, and applying
$H^1(\R)\hookrightarrow L^\infty(\R)$.

Let $\mathcal Y^\eta$ and $\mathcal Z^\eta$ be the explicit functionals
$\mathscr Y_\eta$ and $\mathscr Z_\eta$ in
\eqref{eq:app-Y}--\eqref{eq:app-Z}.  Equations
\eqref{eq:k0energy}--\eqref{eq:k2energy} and
\eqref{eq:Mlocal0}--\eqref{eq:Mlocal1} show that every new term in the
linearized energy is bounded by
\[
 \varepsilon\mathcal Z^\eta+
 C_{\varepsilon}(1+\mathcal Y^\eta)^N,
\]
where $N$ and the constant are independent of $\eta$.  The complete local
energy inequality is therefore
\begin{equation}\label{eq:localODE}
 \frac{\dd}{\dd t}\mathcal Y^\eta+\mathcal Z^\eta
 \leq C(1+\mathcal Y^\eta)^N,\qquad
 \sup_{0<\eta<1}\mathcal Y^\eta(0)<\infty.
\end{equation}
The hydrodynamic terms in \eqref{eq:localODE} are precisely those in the
cited singular-parabolic iteration; \eqref{eq:Mlocal0}--\eqref{eq:Mlocal1}
list all additional terms.  The ODE comparison lemma supplies a time
$T_*>0$, independent of $\eta$, on which
\begin{equation}\label{eq:uniformlocal}
 \sup_{0<\eta<1}\left\{
 \sup_{0\leq t\leq T_*}\mathcal Y^\eta(t)
 +\int_0^{T_*}\mathcal Z^\eta(t)\,\dd t\right\}<\infty.
\end{equation}
The standard regularized transport--parabolic iteration first constructs
the positive-background solution; \eqref{eq:localODE} prevents its norm from
leaving the iteration ball before $T_*$.  The fact that every iterate
preserves $h_x=\rho k$ and the density constraint is proved in
\eqref{eq:app-F-iterate}--\eqref{eq:app-Q-iterate}.

\emph{Step 3: constraints and the vacuum limit.}
The differential constraint
\[
 F=h_x-\rho k
\]
satisfies
\begin{equation}\label{eq:defect}
 \Dt F=-u_xF.
\end{equation}
It vanishes initially by \eqref{eq:magapprox}.  The analogous defects
relating $\phi$, $\psi$, and $\rho$ satisfy the homogeneous equations in
\cite[Section 2]{CaoLiZhu2022}; hence all constraints persist.
The uniform estimate \eqref{eq:uniformlocal}, weak compactness at top order,
and local Aubin--Lions compactness at lower order permit
$\eta\downarrow0$.  The limit satisfies \eqref{eq:phieq}--\eqref{eq:ueq},
\eqref{eq:augtransport}, and the constraints.  Thus $b=\rho h$ solves
$b_t+(ub)_x=0$.  Positivity at every finite point follows from the
characteristic formula for $\phi$ and $\phi_0>0$.  The cutoff argument of
\cite[Lemma 2.2]{CaoLiZhu2022} recovers $\rho\in C([0,T_*];L^1)$.

\emph{Step 4: uniqueness.}
Put
$\vartheta_i=\rho_i^{1-\delta}$, $f_i=(\log\rho_i)_x$, and
$\mathcal N=\vartheta_1\mathcal M_1-\vartheta_2\mathcal M_2$.
Multiplication of \eqref{eq:ueq} by $\vartheta_i$ gives a constant
principal viscous coefficient.  When $0<\delta<1$,
\[
 \vartheta_i\mathcal M_i
 =|h_i|^2\rho_i^{2-\delta}f_i
   +\rho_i^{3-\delta}h_i\cdot k_i.
\]
Density comparability along the two flows and the fact that the powers of
$\vartheta_i$ in this formula exceed $3/2$ give
\[
 \norm[L^2]{\mathcal N/\sqrt{\vartheta_1}}
 \le C_T\bigl(
  \norm[H^1]{\vartheta_1-\vartheta_2}
 +\norm[L^2]{f_1-f_2}
 +\norm[H^1]{h_1-h_2}
 +\norm[L^2]{k_1-k_2}\bigr).
\]
For $\delta=1$, $\vartheta_i=1$, and the corresponding estimate uses
$\|\rho_1-\rho_2\|_{H^1}$ instead.  Thus the only new pairings in the
fluid difference identity, $\langle\mathcal N,u_1-u_2\rangle$ and
$\langle\mathcal N,u_{1t}-u_{2t}\rangle$, are absorbed by the explicit
vacuum-weighted functional in
\eqref{eq:app-fluid-functional}--\eqref{eq:app-fluid-dissipation}.
The transport equations for $h_1-h_2$ and $k_1-k_2$, together with the raw
density difference equation in the case $\delta=1$, then give, for some
coercivity constant $c_0>0$,
\begin{equation}\label{eq:differenceenergy}
 \frac{\dd}{\dd t}\mathfrak E(t)+\frac{c_0}{2}\mathfrak D_f(t)
 \leq C_T\mathfrak E(t).
\end{equation}
All quantities and every Lorentz pairing in this argument are displayed in
\eqref{eq:app-fluid-difference-variables}--\eqref{eq:app-final-uniqueness}.
Since $\mathfrak E(0)=0$, uniqueness follows.

\emph{Step 5: continuation.}
Assume \eqref{eq:continuationquantity}.  For every $T<T^{\max}$, the local
energy estimate has a constant bounded by the same function of
$T^{\max}$ and the displayed continuation norm.  The equations give strong
terminal traces in the lower-order spaces and weakly continuous traces at
top order.  The weighted time estimates preserve the fluid compatibility
conditions; \cref{lem:commute} preserves the magnetic class and
compact support.  The terminal data therefore satisfy the hypotheses of the
uniform local construction, which extends the solution past $T^{\max}$.
\end{proof}

\begin{remark}\label{rem:localdetail}
The proof uses the established singular-parabolic iteration only for its
hydrodynamic principal part.  The independent estimates
\eqref{eq:uniformkapprox}, \eqref{eq:Mlocal0}--\eqref{eq:Mlocal1}, and
\eqref{eq:defect} are what make the augmented magnetic approximation
uniform.
\end{remark}

\section{Proofs of the two main theorems}\label{sec:global}

\begin{proof}[Proof of \cref{thm:global-sublinear}]
Fix $0<\delta<1$.  By \cref{prop:local}, the data generate a unique regular
solution on a nonempty interval.  Let $[0,T^{\max})$ be its maximal interval
of existence.  We prove that $T^{\max}=\infty$ by contradiction.

Assume $T^{\max}<\infty$ and take any $T<T^{\max}$.  Mass and energy are
bounded by \cref{lem:energy}; the density upper bound is independent of
$T$ by \cref{lem:densityupper}; and $h,k$ remain transported by
\cref{lem:commute}.  The modified entropy, the pointwise effective-velocity
estimate, and the high-order estimate then give
\[
 \sup_{0\leq t\leq T}\bigl(
 \norm[H^2]{\rho^{\gamma-1}}+\norm[H^1]{\psi}
 +\norm[H^2]{u}+\norm[L^2]{u_t}
 +\norm[L^\infty]{h}+\norm[H^2]{k}\bigr)\leq C_{T^{\max}}.
\]
The same constant controls the singular vacuum-compatible quantities
$\rho^{(\delta-1)/2}u_x$ and
$\rho^{\delta-1}u_{xx}$ in \eqref{eq:regvelocity}.  The notation
$C_{T^{\max}}$ is legitimate because every estimate obtained above depends
monotonically on the length of the finite interval, not on a positive lower
bound for $\rho$.

Let $T\uparrow T^{\max}$.  The equations supply strong terminal traces one
spatial derivative below the top order, while the uniform bounds supply
weak terminal traces at the top order.  The time-weighted estimates preserve
the two compatibility relations in \eqref{eq:datadelta}.  Equations
\eqref{eq:hkcharacteristics} preserve $k\in H^2$ and its compact support.
Consequently the terminal data satisfy the hypotheses of the local
construction with the same bounds.  The continuation statement in
\cref{prop:local} extends the solution to
$[0,T^{\max}+\varepsilon]$, contradicting maximality.  Thus
$T^{\max}=\infty$.

The identities in the equations give the time continuities stated in the
theorem, and \eqref{eq:timeweightcorrect} gives
\eqref{eq:timeweighted-sublinear}.  The characteristic identities
\eqref{eq:transport-sublinear} are \eqref{eq:hkcharacteristics}.  Finally,
for any $0<\tau<T$ and compact $K\Subset\R$, continuity and pointwise
positivity imply
\[
 \inf_{[\tau,T]\times K}\rho>0.
\]
On this cylinder the velocity equation is uniformly parabolic, so standard
interior regularity upgrades the regular solution to a classical one.  Since
$\tau,T$, and $K$ are arbitrary, the solution is classical on
$(0,\infty)\times\R$.
\end{proof}

\begin{proof}[Proof of \cref{thm:global-linear}]
Now set $\delta=1$.  By \cref{prop:local}, there is again a unique maximal
regular solution on $[0,T^{\max})$.  Suppose for contradiction that
$T^{\max}<\infty$.  For every $T<T^{\max}$,
\cref{lem:commute,lem:energy,lem:densityupper,lem:BD,lem:vinfty} and
\cref{prop:apriori} give
\[
 \sup_{0\leq t\leq T}\left(
 \norm[H^2]{\rho^{\gamma-1}}+\norm[H^1]{(\log\rho)_x}
 +\norm[H^2]{u}+\norm[L^2]{u_t}
 +\norm[L^\infty]{h}+\norm[H^2]{k}\right)\leq C_{T^{\max}}.
\]
There are no singular density weights in this line: the principal coefficient
in the reformulated velocity equation is
$\rho^{\delta-1}=1$.  Hence every continuation quantity in
\eqref{eq:continuationquantity} remains finite uniformly as
$T\uparrow T^{\max}$.

The equations yield the required terminal traces, while
\eqref{eq:hkcharacteristics} preserves the magnetic data class and compact
support.  Restarting the $\delta=1$ local construction at $T^{\max}$ extends
the solution past its maximal time, a contradiction.  Therefore
$T^{\max}=\infty$.

The unweighted time estimate \eqref{eq:timeweighted-linear} follows from
\eqref{eq:timeweightcorrect} with all density weights equal to one, and the
transport conclusion is \eqref{eq:hkcharacteristics}.  On any compact
cylinder $[\tau,T]\times K$ with $\tau>0$, pointwise positivity and
continuity again give a positive density lower bound.  Uniform interior
parabolic regularity then proves that the global regular solution is
classical on $(0,\infty)\times\R$.
\end{proof}

\begin{proof}[Proof of \cref{cor:constant}]
Here $h_0=c$ and $k_0=0$.  By \cref{lem:commute}, $h=c$ for all time, so
$b=c\rho$.  Substitution in momentum gives
\eqref{eq:compoundpressure}.  According to whether $0<\delta<1$ or
$\delta=1$, apply \cref{thm:global-sublinear} or
\cref{thm:global-linear}, respectively.
\end{proof}

\Needspace{8\baselineskip}
\begin{corollary}[Persistence of nonzero momentum]\label{cor:momentum}
The total momentum
\begin{equation}\label{eq:momentum}
 P_0=\int_\R\rho_0u_0\,\dd x
\end{equation}
is conserved.  If $P_0\neq0$, then
\begin{equation}\label{eq:nondecay}
 \norm[L^\infty]{u(t)}\geq\frac{|P_0|}{M_0}
 \qquad\text{for all }t\geq0.
\end{equation}
Thus the velocity cannot converge to zero in $L^\infty$.
\end{corollary}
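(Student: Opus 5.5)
The plan has two parts: conservation of $P(t)=\int_\R\rho u\,\dd x$, and then a one-line Hölder bound using mass conservation.

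\emph{Conservation.} Integrate the conservative momentum equation in \eqref{eq:system} over $\R$. The flux $\rho u^2+A\rho^\gamma+\frac12|b|^2-\alpha\rho^\delta u_x$ should vanish at $\pm\infty$, so I expect $\frac{\dd}{\dd t}P(t)=0$. To make this rigorous I will repeat the cutoff argument of \cref{lem:energy}. Test the momentum equation against $\chi_R(x)=\chi(x/R)$ and integrate over $(0,t)$. This gives
\[
 \int_\R\rho u\,\chi_R\,\dd x\Big|_0^t
 =\int_0^t\!\int_\R\Bigl(\rho u^2+A\rho^\gamma+\tfrac12|b|^2-\alpha\rho^\delta u_x\Bigr)\chi_R'\,\dd x\dd s .
\]
The right side is bounded by $CR^{-1}\int_0^t\int_{R\le|x|\le2R}(\cdots)$. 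Every term in the bracket is integrable on $(0,t)\times\R$:
\begin{itemize}
 \item $\rho u^2$ and $\rho^\gamma$ are controlled by \cref{lem:energy};
 \item $|b|^2$ is controlled by $b\in C([0,T];H^2)$;
 \item $\rho^\delta|u_x|\le \rho^{\delta/2}|u_x|\,\rho^{\delta/2}$, and by Cauchy--Schwarz its integral is at most the energy dissipation times $\int\rho^\delta\,\dd x\le C_0^{\delta-1}\cdots$. This term needs care when $\delta<1$.
\end{itemize}
A cleaner route for the last term is to bound $\rho^\delta|u_x|$ by $C_0^\delta|u_x|$ together with $u_x\in L^2$. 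This gives $\int_{R\le|x|\le2R}|u_x|\le R^{1/2}\|u_x\|_{L^2}$, which after multiplying by $R^{-1}$ still tends to zero.

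Letting $R\to\infty$ by dominated convergence, the left side converges because $\rho u\in L^1$: we have $\rho|u|\le\rho^{1/2}\cdot\rho^{1/2}|u|$, so Cauchy--Schwarz and \cref{lem:energy} apply. This gives $P(t)=P_0$.

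\emph{Lower bound.} For every $t$,
\[
 |P_0|=\Bigl|\int_\R\rho u\,\dd x\Bigr|\le\|u(t)\|_{L^\infty}\int_\R\rho\,\dd x=M_0\|u(t)\|_{L^\infty},
\]
using \eqref{eq:mass}. Since $M_0>0$, \eqref{eq:nondecay} follows. Here $u(t)\in H^2\subset L^\infty$ by the global regularity from \cref{thm:global-sublinear} or \cref{thm:global-linear}.

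\emph{Main obstacle.} The only delicate point is justifying that the viscous flux term $\rho^\delta u_x\chi_R'$ vanishes as $R\to\infty$ with merely far-field vacuum. The $L^2$-in-space bound on $u_x$, combined with the $1/R$ factor and the interval length $R$, gives a decay rate of $R^{-1/2}$, which is enough.
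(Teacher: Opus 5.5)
Your proof is correct and follows the same route as the paper: test the momentum equation against the cutoffs $\chi_R$, show that the flux errors vanish as $R\to\infty$, and conclude with $|P_0|\le M_0\|u(t)\|_{L^\infty}$. You were right to drop the first bound through $\int_\R\rho^\delta\,dx$, which can be infinite when $\delta<1$. The replacement estimate $R^{-1}\int_{R\le|x|\le2R}\rho^\delta|u_x|\,dx\le C R^{-1/2}\|u_x\|_{L^2}$ supplies exactly the viscous-flux detail the paper leaves implicit.
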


\begin{proof}
Integrate momentum against cutoffs tending to one.  The pressure, magnetic
pressure, and viscous flux vanish at infinity by the regularity proved above,
so momentum is conserved.  Then
$|P_0|\leq\norm[L^\infty]{u(t)}\int\rho(t)
=M_0\norm[L^\infty]{u(t)}$.
\end{proof}

\section{Scope and open extensions}\label{sec:scope}

The two main theorems use neither small data nor a constant ratio $b/\rho$,
but they do use zero magnetic diffusivity and a controlled mass derivative of the
frozen flux.  If a positive resistivity $\nu b_{xx}$ is added, $h$ and $k$
are no longer transported.  A natural variable in that setting is
\[
 J=\frac{b}{\rho^{(1+\delta)/2}},
\]
which appears in the recent local theory \cite{LiLuYuan2026}; a global
coupled $H^1$--$H^2$ estimate for $J$ would still be needed.

If a nonzero longitudinal magnetic field and transverse velocity are
retained, the induction equation for a weighted magnetic variable contains
a term of the form $\rho^{-(1+\delta)/2}w_x$.  Basic energy does not control
this quantity at far-field vacuum.  Therefore neither
\cref{thm:global-sublinear} nor \cref{thm:global-linear} covers the full
planar MHD system.  A coupled Alfv\'en effective entropy
in that regime remains an open problem.

\appendix
\section{Uniform local construction for the augmented magnetic system}
\label{app:local-construction}

This appendix supplies the details needed in
\cref{prop:local}.  In particular, we specify the magnetic function space,
construct the positive-background approximate solutions without a spatial
truncation, and verify that the magnetic terms preserve the uniform
singular-parabolic estimates of \cite{XinZhu2021,CaoLiZhu2022}.

\subsection{The two-tail magnetic space}

Since $k_0$ is compactly supported and
$(h_0)_x=\rho_0k_0$, there are vectors $c_-,c_+\in\R^m$ such that
\[
 h_0(x)=c_-\quad\text{for }x\ll-1,
 \qquad
 h_0(x)=c_+\quad\text{for }x\gg1.
\]
Fix $\chi\in C^\infty(\R)$ with
$\chi=0$ on $(-\infty,-1]$ and $\chi=1$ on $[1,\infty)$, and set
\begin{equation}\label{eq:app-hbar}
 \bar h(x)=c_-+(c_+-c_-)\chi(x).
\end{equation}
We use the affine space
\begin{equation}\label{eq:app-magnetic-space}
 \mathcal H_{\bar h}^3
 :=\{h:\ h-\bar h\in H^3(\R;\R^m)\},
 \qquad
 \norm[\mathcal H_{\bar h}^3]{h}
 :=\norm[H^3]{h-\bar h}.
\end{equation}
This space permits two distinct nonzero tail values.  It also contains the
constant-ratio case: if $c_-=c_+=c$, one may take $\bar h=c$.

The assumptions of \cref{thm:global-sublinear} or
\cref{thm:global-linear} imply
$h_0\in\mathcal H_{\bar h}^3$.  Indeed,
$(h_0)_x=\rho_0k_0\in H^2$, while $h_0-\bar h$ is compactly supported.
Here $\rho_0\in H^2$ follows from \eqref{eq:psidef} and
\eqref{eq:rhoxxpsi}.  For later use, the equivalent quantity
\begin{equation}\label{eq:app-h-equivalent}
 \norm[L^2]{h-\bar h}+\norm[H^2]{h_x}
\end{equation}
may be used in place of
$\norm[\mathcal H_{\bar h}^3]{h}$.

\subsection{Positive-background data and constraint-preserving iteration}

Let
\[
 C_\gamma=\frac{A\gamma}{\gamma-1},
 \qquad
 r_\eta=\left(C_\gamma^{-1}\eta\right)^{1/(\gamma-1)}.
\]
For $0<\eta<1$, define
\begin{align}
 \phi_0^\eta&=\phi_0+\eta,
 &
 \rho_0^\eta
 &=\left(C_\gamma^{-1}\phi_0^\eta\right)^{1/(\gamma-1)},
 \label{eq:app-fluid-approx}\\
 \psi_0^\eta
 &=\delta(\rho_0^\eta)^{\delta-2}(\rho_0^\eta)_x,
 &
 h_0^\eta&=h_0,
 &
 k_0^\eta&=\frac{\rho_0}{\rho_0^\eta}k_0.
 \label{eq:app-magnetic-approx}
\end{align}
We do not change the initial velocity:
\begin{equation}\label{eq:app-velocity-approx}
 u_0^\eta:=u_0,\qquad
 u_0^\eta\longrightarrow u_0\quad\text{in }H^2.
\end{equation}
This choice preserves the vacuum compatibility bounds.  Indeed, in the
sublinear case define
\begin{equation}\label{eq:app-g-approx}
 g_1^\eta=\left(\frac{\rho_0}{\rho_0^\eta}\right)^{(1-\delta)/2}g_1,
 \qquad
 g_2^\eta=\left(\frac{\rho_0}{\rho_0^\eta}\right)^{1-\delta}g_2.
\end{equation}
Then
\[
 (u_0^\eta)_x=(\rho_0^\eta)^{(1-\delta)/2}g_1^\eta,
 \qquad
 \alpha(u_0^\eta)_{xx}=(\rho_0^\eta)^{1-\delta}g_2^\eta.
\]
Since $0<\rho_0/\rho_0^\eta\leq1$, dominated convergence gives
\begin{equation}\label{eq:app-g-uniform}
 \norm[L^2]{g_i^\eta}\leq\norm[L^2]{g_i},\qquad
 g_i^\eta\longrightarrow g_i\quad\text{in }L^2,\quad i=1,2.
\end{equation}
When $\delta=1$ the weights equal one and no auxiliary $g_i^\eta$ are
needed.  This is the one-dimensional compatibility approximation used in
\cite[Section 2.1]{CaoLiZhu2022}, written here explicitly.

Because
$\operatorname{supp}k_0$ is contained in a fixed compact set on which
$\rho_0$ has a positive minimum,
\begin{equation}\label{eq:app-k0-uniform}
 \sup_{0<\eta<1}\norm[H^2]{k_0^\eta}<\infty,
 \qquad
 k_0^\eta\longrightarrow k_0\quad\text{in }H^2.
\end{equation}
Moreover,
\begin{equation}\label{eq:app-initial-constraint}
 (h_0^\eta)_x=\rho_0^\eta k_0^\eta=(h_0)_x.
\end{equation}
Thus $h_0^\eta-\bar h=h_0-\bar h$ in $H^3$, uniformly in
$\eta$.

We describe a Picard scheme which preserves all differential constraints at
every iterate.  Start with the time-independent function
$u^0(t,x)=u_0^\eta(x)$.  Given $u^n$, first solve, with this velocity,
\begin{equation}\label{eq:app-transport-iterate}
 \left\{
 \begin{aligned}
  &\rho_t^{n+1}+u^n\rho_x^{n+1}
       +\rho^{n+1}u_x^n=0,\\
  &\psi_t^{n+1}+u^n\psi_x^{n+1}
       +\delta u_x^n\psi^{n+1}
       +\delta(\rho^{n+1})^{\delta-1}u_{xx}^n=0,\\
  &h_t^{n+1}+u^n h_x^{n+1}=0,\\
  &k_t^{n+1}+u^n k_x^{n+1}=0,
 \end{aligned}\right.
\end{equation}
with the complete initial condition
\begin{equation}\label{eq:app-transport-initial}
 (\rho^{n+1},\psi^{n+1},h^{n+1},k^{n+1})|_{t=0}
 =(\rho_0^\eta,\psi_0^\eta,h_0^\eta,k_0^\eta),
\end{equation}
and put
\[
 \phi^{n+1}=C_\gamma(\rho^{n+1})^{\gamma-1}.
\]
Next solve the linear parabolic initial-value problem
\begin{align}
 u_t^{n+1}+u^nu_x^{n+1}+\phi_x^{n+1}
 ={}&\alpha(\rho^{n+1})^{\delta-1}u_{xx}^{n+1}
     +\alpha\psi^{n+1}u_x^{n+1}
     -\mathcal M^{n+1},\label{eq:app-u-iterate}\\*
 \mathcal M^{n+1}
 ={}&\frac{|h^{n+1}|^2}{\delta}
       (\rho^{n+1})^{2-\delta}\psi^{n+1}
     +(\rho^{n+1})^2h^{n+1}\cdot k^{n+1},
 \label{eq:app-M-iterate}\\*
 u^{n+1}(0,x)={}&u_0^\eta(x).
 \label{eq:app-u-initial}
\end{align}
For fixed $\eta$, the characteristic formula gives a strictly positive
lower bound for $\rho^{n+1}$ on every finite iteration interval, so
\eqref{eq:app-u-iterate} is a classical linear parabolic equation.
More explicitly, if
\[
 \frac{\dd}{\dd t}X^n(t,a)=u^n(t,X^n(t,a)),
 \qquad X^n(0,a)=a,
\]
then the first equation in \eqref{eq:app-transport-iterate} is an ordinary
differential equation along $X^n$ and gives
\begin{equation}\label{eq:app-rho-characteristic}
 \rho^{n+1}(t,X^n(t,a))
 =\rho_0^\eta(a)
 \exp\left(-\int_0^t u_x^n(s,X^n(s,a))\,\dd s\right)>0.
\end{equation}
This formula is the only place where a positive lower bound is needed to
solve the fixed-$\eta$ parabolic problem.  The uniform estimates below do
not depend on that lower bound.

This ordering is important.  Define
\[
 F^{n+1}=h_x^{n+1}-\rho^{n+1}k^{n+1}.
\]
Since $\rho^{n+1},h^{n+1}$, and $k^{n+1}$ are transported by the
same $u^n$,
\begin{equation}\label{eq:app-F-iterate}
 (\partial_t+u^n\partial_x)F^{n+1}=-u_x^nF^{n+1}.
\end{equation}
Consequently $F^{n+1}=0$ for every $n$.  Similarly, if
\[
 Q^{n+1}
 =\psi^{n+1}
  -\delta(\rho^{n+1})^{\delta-2}\rho_x^{n+1},
\]
then
\begin{equation}\label{eq:app-Q-iterate}
 (\partial_t+u^n\partial_x)Q^{n+1}
 +\delta u_x^nQ^{n+1}=0,
\end{equation}
and hence $Q^{n+1}=0$.  Thus all identities used below, in particular
\begin{equation}\label{eq:app-M-regular}
 \mathcal M^{n+1}
 =|h^{n+1}|^2\rho_x^{n+1}
  +(\rho^{n+1})^2h^{n+1}\cdot k^{n+1},
\end{equation}
hold at the approximate level, not merely after passage to the limit.

\subsection{Uniform local energy}

For a positive-background state, put
\[
 \omega_\eta=\rho^{(\delta-1)/2},
 \qquad
 a_\eta=\rho^{\delta-1};
\]
both weights are equal to one when $\delta=1$.  We use
\begin{align}
 \mathscr Y_\eta(t)
 :={}&
 \norm[H^2]{\phi-\eta}^2
 +\norm[H^1]{\psi}^2
 +\norm[H^2]{u}^2
 +\norm[L^2]{u_t}^2                                      \nonumber\\
 &+\norm[L^2]{\omega_\eta u_x}^2
 +\norm[L^2]{a_\eta u_{xx}}^2
 +\norm[H^2]{k}^2
 +\norm[H^3]{h-\bar h}^2,                     \label{eq:app-Y}\\
 \mathscr Z_\eta(t)
 :={}&
 \norm[H^3]{u}^2
 +\norm[H^1]{u_t}^2
 +\norm[L^2]{\omega_\eta u_{tx}}^2
 +\norm[H^1]{a_\eta u_{xx}}^2.                \label{eq:app-Z}
\end{align}
The spatial derivatives of $\rho$ are derived uniformly from
\begin{align}
 \rho_x&=\frac1\delta\rho^{2-\delta}\psi,\label{eq:app-rhox}\\
 \rho_{xx}
 &=\frac1\delta\rho^{2-\delta}\psi_x
   +\frac{2-\delta}{\delta^2}
       \rho^{3-2\delta}\psi^2.                 \label{eq:app-rhoxx}
\end{align}
On every compact interval, the zeroth-order density norm follows from the
upper bound.  No global, $\eta$-uniform $L^2$ bound for
$\rho-r_\eta$ is needed in the vacuum limit.

We record the magnetic estimates used in the energy argument.  On a ball
$\mathscr Y_\eta\le R$, with $R$ independent of $\eta$,
\begin{align}
 \norm[H^1]{\mathcal M}
 &\le C_R\left(1+\norm[H^1]{\psi}
                    +\norm[H^2]{k}
                    +\norm[H^3]{h-\bar h}\right),
 \label{eq:app-MH1}\\
 \norm[L^2]{\mathcal M_t}
 &\le C_R(1+\norm[H^2]{u})
       \left(1+\norm[H^1]{\psi}
                    +\norm[H^2]{k}
                    +\norm[H^3]{h-\bar h}\right)^2.
 \label{eq:app-Mt}
\end{align}
The constants depend on an upper bound for $\rho$, but not on its positive
lower bound.  To prove \eqref{eq:app-MH1}, differentiate
\[
 \mathcal M
 =\frac{|h|^2}{\delta}\rho^{2-\delta}\psi
  +\rho^2h\cdot k
\]
and use $h_x=\rho k$, \eqref{eq:app-rhox}, and
$H^1(\R)\hookrightarrow L^\infty(\R)$.  Every resulting density power is
nonnegative.  For \eqref{eq:app-Mt}, before the second density derivative
is available, use instead
\[
 \mathcal M=(\rho|h|^2)_x-\rho^2h\cdot k
\]
and integrate the first term by parts; after \eqref{eq:app-rhoxx}, direct
differentiation in time requires only the norms displayed in
\eqref{eq:app-Y}.

The $H^2$ transport estimate for $k$ is
\begin{equation}\label{eq:app-k-energy}
 \frac{\dd}{\dd t}\norm[H^2]{k}^2
 \le C\left(\norm[L^\infty]{u_x}
             +\norm[L^2]{u_{xx}}\right)\norm[H^2]{k}^2.
\end{equation}
Writing $\widetilde h=h-\bar h$, one has
\[
 \widetilde h_t+u\widetilde h_x=-u\bar h_x.
\]
Either its $H^3$ transport estimate or the equivalent norm
\eqref{eq:app-h-equivalent}, together with
$h_x=\rho k$, gives
\begin{equation}\label{eq:app-h-energy}
 \frac{\dd}{\dd t}\norm[H^3]{\widetilde h}^2
 \le \varepsilon\norm[H^3]{u}^2
      +C_{\varepsilon,R}
       \bigl(1+\norm[H^3]{\widetilde h}^2
                +\norm[H^2]{k}^2\bigr).
\end{equation}

Apply the singular-parabolic estimates of
\cite[Section 3.6]{XinZhu2021} to
\eqref{eq:app-transport-iterate}--\eqref{eq:app-u-iterate}.
The undifferentiated, $x$-differentiated, and $t$-differentiated
momentum estimates require, respectively,
$\mathcal M\in L^2$, $\mathcal M_x\in L^2$, and the dual form of
$\mathcal M_t$ from \eqref{eq:Mttrick}; after the spatial estimate one uses
\eqref{eq:app-Mt}.  More explicitly, the new pairings and the new term in
the elliptic recovery obey the following bounds.  With
$H=\rho|h|^2$ and $G=\rho^2h\cdot k$,
\begin{equation}\label{eq:app-HtGt}
 \norm[L^2]{\omega_\eta^{-1}H_t}+\norm[L^2]{G_t}
 \le C_R(1+\mathscr Y_\eta)^{N/2}.
\end{equation}
Consequently,
\begin{align}
 |\ip{\mathcal M}{u}|+|\ip{\mathcal M}{u_t}|
 &\le \varepsilon\norm[L^2]{u_t}^2
       +C_{\varepsilon,R}(1+\mathscr Y_\eta)^N,
 \label{eq:app-M-low-pairings}\\
 |\ip{\mathcal M_t}{u_t}|
 &=\left|-\ip{(\rho|h|^2)_t}{u_{tx}}
           -\ip{(\rho^2h\cdot k)_t}{u_t}\right| \le \varepsilon\norm[L^2]{\omega_\eta u_{tx}}^2
       +C_{\varepsilon,R}(1+\mathscr Y_\eta)^N,
 \label{eq:app-Mt-dual-pairing}\\
 \norm[L^2]{\mathcal M_x}^2
 &\le C_R(1+\mathscr Y_\eta)^N,
 \label{eq:app-Mx-elliptic}\\
 |\ip{\mathcal M_t}{u_{tt}}|
 &\le \frac14\norm[L^2]{u_{tt}}^2
       +C_R(1+\mathscr Y_\eta)^N.
 \label{eq:app-Mt-time-pairing}
\end{align}
In \eqref{eq:app-Mt-dual-pairing} the first term is integrated by parts in
$x$ before $\rho_{xx}$ is known; the density factor in
$(\rho|h|^2)_t$ supplies the dual weight $\omega_\eta^{-1}$.
Estimate \eqref{eq:app-Mx-elliptic} is inserted in the elliptic formula for
$u_{xxx}$, not paired with a second derivative of $u$.  After that spatial
estimate, \eqref{eq:app-Mt} gives
\eqref{eq:app-Mt-time-pairing}.  These are all Lorentz-force contributions
to the cited hierarchy.  Combining them with
\eqref{eq:app-k-energy}--\eqref{eq:app-h-energy} gives
\begin{align}
 &\sup_{0\le s\le t}\mathscr Y_\eta(s)
   +\int_0^t\mathscr Z_\eta(s)\,\dd s                 
   +\sup_{0<s\le t}
       s\norm[L^2]{\omega_\eta u_{tx}(s)}^2
   +\int_0^t s\left(
       \norm[L^2]{u_{tt}}^2+
       \norm[L^2]{u_{txx}}^2\right)\dd s       \nonumber\\
 &\le C_0+C_R\int_0^t
       \bigl(1+\mathscr Y_\eta(s)\bigr)^N\,\dd s .
 \label{eq:app-uniform-energy}
\end{align}
Here $C_0,R,N$ are independent of $\eta$.  The fluid compatibility
approximation gives uniform initial bounds for all fluid terms in
$\mathscr Y_\eta(0)$.  The magnetic part is uniform by
\eqref{eq:app-k0-uniform}--\eqref{eq:app-initial-constraint};
in particular, the extra contribution to $u_t^\eta(0)$ is uniformly
bounded by \eqref{eq:app-MH1}.

We make the ODE comparison explicit.  Choose
$R_*=2(C_0+1)$ and set
\[
 T_*:=\min\left\{1,
 \frac{R_*-C_0}{C_{R_*}(1+R_*)^N}\right\}.
\]
Suppose, as a bootstrap assumption, that
$\sup_{s\leq t}\mathscr Y_\eta(s)\leq R_*$ for $t\leq T_*$.  Then
\eqref{eq:app-uniform-energy} improves this bound to
$\sup_{s\leq t}\mathscr Y_\eta(s)<R_*$.  Continuity prevents the norm from
being the first to reach $R_*$, so $T_*$ is independent of $\eta$ and
\begin{equation}\label{eq:app-uniform-bound}
 \sup_{0<\eta<1}\left[
  \sup_{0\le t\le T_*}\mathscr Y_\eta(t)
  +\int_0^{T_*}\mathscr Z_\eta(t)\,\dd t\right]<\infty.
\end{equation}

For completeness, we now verify the lower-order contraction that constructs
the positive-background solution for each fixed $\eta$.  Write
\[
 \Delta f^{n+1}=f^{n+1}-f^n,\qquad
 f\in\{\rho,\psi,u,h,k\},
\]
and define
\begin{align}
 \mathcal E_{n+1}(t)={}&
 \norm[H^1]{\Delta\rho^{n+1}}^2
 +\norm[L^2]{\Delta\psi^{n+1}}^2
 +\norm[H^1]{\Delta u^{n+1}}^2+\norm[H^1]{\Delta h^{n+1}}^2
 +\norm[L^2]{\Delta k^{n+1}}^2,
 \quad
 \mathcal D_{n+1}=\norm[H^2]{\Delta u^{n+1}}^2.
 \label{eq:app-picard-low-energy}
\end{align}
On the ball determined by $R_*$, the Lorentz map is locally Lipschitz:
\begin{align}
 \norm[L^2]{\mathcal M(U_1)-\mathcal M(U_2)}
 \leq C_{\eta,R_*}\bigl(&\norm[H^1]{\rho_1-\rho_2}
 +\norm[L^2]{\psi_1-\psi_2}
 +\norm[H^1]{h_1-h_2}
 +\norm[L^2]{k_1-k_2}\bigr).
 \label{eq:app-M-lipschitz-picard}
\end{align}
This follows by subtracting the two products in
\eqref{eq:app-M-iterate}; for example,
\[
 \rho_1^2h_1\cdot k_1-\rho_2^2h_2\cdot k_2
 =(\rho_1^2-\rho_2^2)h_1\cdot k_1
 +\rho_2^2(h_1-h_2)\cdot k_1
 +\rho_2^2h_2\cdot(k_1-k_2).
\]
Subtract the transport equations at two successive iterates, multiply by
the corresponding differences, and integrate.  Subtracting the parabolic
equations and testing by $\Delta u^{n+1}$ and
$-\Delta u_{xx}^{n+1}$ gives, after Young's inequality,
\begin{equation}\label{eq:app-picard-contraction}
 \sup_{0\leq s\leq t}\mathcal E_{n+1}(s)
 +\int_0^t\mathcal D_{n+1}(s)\,\dd s
 \leq C_{\eta,R_*}t
 \sup_{0\leq s\leq t}\mathcal E_n(s).
\end{equation}
The initial differences vanish.  All convection terms in this calculation
have the form ``bounded coefficient times one old and one new difference'';
\eqref{eq:app-M-lipschitz-picard} treats the only new magnetic term.
Choose $T_\eta>0$ so that $C_{\eta,R_*}T_\eta\leq1/2$.  Then
\eqref{eq:app-picard-contraction} makes the iterates Cauchy in the lower
norm and constructs a positive-background solution on $[0,T_\eta]$.

Estimate \eqref{eq:app-uniform-bound} and the positive-density continuation
criterion extend it to the common interval $[0,T_*]$.  Indeed, the
characteristic formula gives
\begin{equation}\label{eq:app-fixed-eta-lower}
 \inf_{x\in\R}\rho^\eta(t,x)
 \ge r_\eta\exp\left(
      -\int_0^t\norm[L^\infty]{u_x^\eta(s)}\,\dd s\right)>0,
 \qquad 0\le t\le T_*.
\end{equation}
The integral in \eqref{eq:app-fixed-eta-lower} is bounded uniformly by
\eqref{eq:app-uniform-bound}.  The lower bound may depend on $\eta$, which
is sufficient for continuation of each fixed positive-background problem.
Thus no
$\eta$-uniform contraction constant is required: contraction constructs the
solution for each fixed $\eta$, while the uniform energy estimate supplies
the common lifespan.

\subsection{Time compactness and passage to far-field vacuum}

Let $K\Subset\R$.  The equations and \eqref{eq:app-uniform-bound} imply,
uniformly in $\eta$,
\begin{align}
 \phi_t^\eta,\rho_t^\eta
 &\quad\text{bounded in }L^2(0,T_*;H^1(K)),\nonumber\\
 \psi_t^\eta
 &\quad\text{bounded in }L^2(0,T_*;L^2(K)),\nonumber\\
 u_t^\eta
 &\quad\text{bounded in }
 L^\infty(0,T_*;L^2)\cap L^2(0,T_*;H^1),\nonumber\\
 h_t^\eta
 &\quad\text{bounded in }L^2(0,T_*;H^2(K)),\nonumber\\
 k_t^\eta
 &\quad\text{bounded in }L^2(0,T_*;H^1(K)).
 \label{eq:app-time-compactness}
\end{align}
Aubin--Lions and the one-dimensional embeddings therefore give, after
taking a subsequence,
\begin{align}
 \rho^\eta&\longrightarrow\rho
     &&\text{strongly in }C([0,T_*];H^1(K)),\nonumber\\
 u^\eta&\longrightarrow u
     &&\text{strongly in }C([0,T_*];H^1(K)),\nonumber\\
 \psi^\eta&\longrightarrow\psi
     &&\text{strongly in }L^2(0,T_*;L^2(K)),\nonumber\\
 h^\eta&\longrightarrow h
     &&\text{strongly in }C([0,T_*];H^2(K)),\nonumber\\
 k^\eta&\longrightarrow k
     &&\text{strongly in }C([0,T_*];H^1(K)).
 \label{eq:app-strong-convergence}
\end{align}
In particular,
\begin{equation}\label{eq:app-M-local-convergence}
 \mathcal M^\eta\longrightarrow
 |h|^2\rho_x+\rho^2h\cdot k
 \quad\text{strongly in }
 L^2(0,T_*;L^2(K)).
\end{equation}
This local convergence is sufficient to pass to the equations in
distributions.  The singular viscous terms pass to the limit exactly as in
\cite[Section 2]{CaoLiZhu2022}; the magnetic term passes by
\eqref{eq:app-M-local-convergence}.  The nonsingular constraint
$h_x^\eta=\rho^\eta k^\eta$ passes directly.  We do not pass to the limit
in $Q^\eta$, which contains a negative density power.  Instead,
\eqref{eq:app-Q-iterate} gives the equivalent identity
\begin{equation}\label{eq:app-nonsingular-density-limit}
 (\rho^\eta)_x
 =\frac1\delta(\rho^\eta)^{2-\delta}\psi^\eta
 \longrightarrow
 \frac1\delta\rho^{2-\delta}\psi
 \quad\text{in }L^2(0,T_*;L^2(K)).
\end{equation}
Thus, in the distributional sense,
\[
 h_x=\rho k,\qquad
 \rho_x=\frac1\delta\rho^{2-\delta}\psi.
\]
At every finite point $\rho>0$, the second identity is equivalent to
$\psi=\delta\rho^{\delta-2}\rho_x$.
Therefore $b=\rho h$ satisfies $b_t+(ub)_x=0$.
The characteristic formula for $\phi$ preserves positivity at every
finite point, and the cutoff argument of
\cite[Lemma 2.2]{CaoLiZhu2022} yields
$\rho\in C([0,T_*];L^1)$.

\subsection{Difference estimates and uniqueness}

Let $(\rho_i,u_i,h_i,k_i)$, $i=1,2$, be two regular solutions with the
same initial data and the same two-tail profile $\bar h$.  Set
\[
 r=\rho_1-\rho_2,\qquad
 w=u_1-u_2,\qquad
 z=h_1-h_2,\qquad
 \ell=k_1-k_2.
\]
Put
\begin{equation}\label{eq:app-fluid-difference-variables}
 \vartheta_i=\rho_i^{1-\delta},\qquad
 f_i=(\log\rho_i)_x,\qquad
 \Phi=\phi_1-\phi_2,\quad
 \Theta=\vartheta_1-\vartheta_2,\quad
 F=f_1-f_2.
\end{equation}
Thus $\vartheta_i\equiv1$ and $\Theta\equiv0$ when $\delta=1$.
The densities of two regular solutions with the same initial density are
comparable:
\begin{equation}\label{eq:app-density-comparability}
 C_T^{-1}\le\frac{\rho_1(t,x)}{\rho_2(t,x)}\le C_T,
 \qquad (t,x)\in[0,T]\times\R.
\end{equation}
Indeed, if $A_i(t,x)$ is the inverse flow label for $u_i$, then
\[
 \rho_i(t,x)=\rho_0(A_i(t,x))
 \exp\left(-\int_0^t u_{ix}(s,X_i(s;A_i(t,x)))\,\dd s\right).
\]
The displacements are bounded according to
\[
 |A_i(t,x)-x|\le\int_0^T\|u_i(s)\|_\infty\,\dd s,
\]
while
$(\log\rho_0)_x=\delta^{-1}\rho_0^{1-\delta}\psi_0\in L^\infty$
(with the evident interpretation for $\delta=1$).  This proves
\eqref{eq:app-density-comparability}.

Multiplying the velocity equation by $\vartheta_i$ gives the exact
nonsingular principal equation
\begin{equation}\label{eq:app-transformed-momentum}
 \vartheta_i(u_{it}+u_i u_{ix}+\phi_{ix})-\alpha u_{ixx}
 =\alpha\delta f_i u_{ix}-\vartheta_i\mathcal M_i.
\end{equation}
Subtracting the transport equations gives the following explicit system for
the fluid differences:
\begin{align}
 \Phi_t+u_1\Phi_x+(\gamma-1)\phi_1w_x
 &=-w\phi_{2x}-(\gamma-1)\Phi u_{2x},
 \label{eq:app-Phi-difference}\\
 \Theta_t+u_1\Theta_x+(1-\delta)\vartheta_1w_x
 &=-w\vartheta_{2x}-(1-\delta)\Theta u_{2x},
 \label{eq:app-Theta-difference}\\
 F_t+u_1F_x+u_{1x}F+w_{xx}
 &=-wf_{2x}-w_xf_2.
 \label{eq:app-F-difference}
\end{align}
The second equation is absent when $\delta=1$, because then
$\Theta\equiv0$.  Subtracting \eqref{eq:app-transformed-momentum} gives
\begin{align}
 \vartheta_1(w_t+u_1w_x+\Phi_x)-\alpha w_{xx}
 ={}&-\vartheta_1wu_{2x}
 -\Theta(u_{2t}+u_2u_{2x}+\phi_{2x})\nonumber\\
 &+\alpha\delta f_1w_x+\alpha\delta Fu_{2x}-\mathcal N,
 \qquad
 \mathcal N=\vartheta_1\mathcal M_1-\vartheta_2\mathcal M_2.
 \label{eq:app-momentum-difference-explicit}
\end{align}
These formulas can be checked by adding and subtracting the terms with
subscript two.  They also show why multiplication by $\vartheta_i$ is
useful: the coefficient of $w_{xx}$ is the positive constant $\alpha$.

Define the explicit one-dimensional fluid difference functional and its
dissipation by
\begin{align}
 \mathfrak E_f={}&
  \norm[H^1]{\Phi}^2+\norm[H^1]{\Theta}^2+\norm[L^2]{F}^2
  +\norm[L^2]{\sqrt{\vartheta_1}w}^2+\norm[L^2]{w_x}^2,
 \label{eq:app-fluid-functional}\\
 \mathfrak D_f={}&
  \norm[L^2]{w_x}^2+\norm[L^2]{\sqrt{\vartheta_1}w_t}^2.
 \label{eq:app-fluid-dissipation}
\end{align}
These are the one-dimensional quantities used in the uniqueness calculation
of \cite[Section 3.5]{XinZhu2021}.  We now verify the additional magnetic
pairings in that calculation.
If $0<\delta<1$, the identities
\begin{equation}\label{eq:app-weighted-force-difference}
 \vartheta_i\mathcal M_i
 =|h_i|^2\rho_i^{2-\delta}f_i
   +\rho_i^{3-\delta}h_i\cdot k_i
\end{equation}
give the exact decomposition
\begin{align}
 \mathcal N={}&
 (|h_1|^2-|h_2|^2)\rho_1^{2-\delta}f_1
 +|h_2|^2(\rho_1^{2-\delta}-\rho_2^{2-\delta})f_1
 +|h_2|^2\rho_2^{2-\delta}F\nonumber\\
 &+(\rho_1^{3-\delta}-\rho_2^{3-\delta})h_1\cdot k_1
 +\rho_2^{3-\delta}z\cdot k_1
 +\rho_2^{3-\delta}h_2\cdot\ell.
 \label{eq:app-N-expansion-sublinear}
\end{align}
The fields $h_i$ themselves may have nonzero tail values and need not lie
in $L^2$.  In \eqref{eq:app-N-expansion-sublinear} they are used only in
$L^\infty$; the difference $z=h_1-h_2$ lies in $H^1$ because the two
solutions have the same tail profile.  Formula
\eqref{eq:app-N-expansion-sublinear}, together with
\eqref{eq:app-density-comparability}, implies
\begin{equation}\label{eq:app-N-bound-sublinear}
 \norm[L^2]{\mathcal N/\sqrt{\vartheta_1}}
 \le C_T\left(
  \norm[H^1]{\Theta}+\norm[L^2]{F}
  +\norm[H^1]{z}+\norm[L^2]{\ell}\right).
\end{equation}
To see this without a hidden vacuum singularity, write
$\rho^q=\vartheta^{q/(1-\delta)}$ in
\eqref{eq:app-weighted-force-difference}.  Both exponents
$(2-\delta)/(1-\delta)$ and $(3-\delta)/(1-\delta)$ are larger than
$3/2$; the mean-value theorem, followed by division by
$\sqrt{\vartheta_1}$, therefore leaves only nonnegative powers of
$\vartheta_i$.  The remaining factors are controlled by the one-dimensional
Sobolev embedding.

When $\delta=1$, $\mathcal N=\mathcal M_1-\mathcal M_2$.  The force
representation gives
\begin{align}
 \mathcal M_1-\mathcal M_2
={}&(|h_1|^2-|h_2|^2)\rho_{1x}+|h_2|^2r_x +(\rho_1^2-\rho_2^2)h_1\cdot k_1
   +\rho_2^2z\cdot k_1+\rho_2^2h_2\cdot\ell,
 \label{eq:app-M-difference}
\end{align}
and hence
\begin{equation}\label{eq:app-N-bound-linear}
 \norm[L^2]{\mathcal N}
 \le C_T\left(
   \norm[H^1]{r}+\norm[H^1]{z}+\norm[L^2]{\ell}\right).
\end{equation}
Set
\begin{equation}\label{eq:app-magnetic-difference-remainder}
 \mathfrak R_m=
 \begin{cases}
  \norm[H^1]{z}^2+\norm[L^2]{\ell}^2,&0<\delta<1,\\
  \norm[H^1]{r}^2+\norm[H^1]{z}^2+\norm[L^2]{\ell}^2,&\delta=1.
 \end{cases}
\end{equation}
Let $c_0>0$ denote the coercivity constant in the hydrodynamic difference
estimate.
The only new terms in the zeroth- and first-order velocity-difference
identities are now explicit:
\begin{align}
 |\ip{\mathcal N}{w}|
 &\le C_T(\mathfrak E_f+\mathfrak R_m),
 \label{eq:app-Nw-pairing}\\
 |\ip{\mathcal N}{w_t}|
 &\le \frac{c_0}{4}
       \norm[L^2]{\sqrt{\vartheta_1}w_t}^2
       +C_T(\mathfrak E_f+\mathfrak R_m).
 \label{eq:app-Nwt-pairing}
\end{align}
Here \eqref{eq:app-N-bound-sublinear} is used for $0<\delta<1$ and
\eqref{eq:app-N-bound-linear} for $\delta=1$.  Applying the hydrodynamic
difference calculation can be followed in four elementary tests:
\begin{enumerate}[label=\textup{(\roman*)},leftmargin=2.2em]
\item Multiply \eqref{eq:app-momentum-difference-explicit} by $w$.  After
integrating $-\alpha w_{xx}w$, the positive term is
$\alpha\norm[L^2]{w_x}^2$.  Derivatives of $\vartheta_1$ are controlled by
its transport equation.
\item Multiply the same equation by $w_t$.  This produces
$\norm[L^2]{\sqrt{\vartheta_1}w_t}^2$ and
$\frac\alpha2\frac{\dd}{\dd t}\norm[L^2]{w_x}^2$.
The two Lorentz pairings are exactly
\eqref{eq:app-Nw-pairing}--\eqref{eq:app-Nwt-pairing}.
\item Multiply \eqref{eq:app-Phi-difference} and
\eqref{eq:app-Theta-difference}, and their first $x$ derivatives, by the
corresponding differences.  Multiply \eqref{eq:app-F-difference} by $F$.
All transport terms are integrated by parts; for example,
$\int u_1\Phi_x\Phi=-\frac12\int u_{1x}\Phi^2$.
\item Use Young's inequality to assign a small part of every $w_x$, $w_t$,
or $w_{xx}$ product to the corresponding dissipation.  The remaining
coefficients are bounded by the regular-solution norms on $[0,T]$.
\end{enumerate}
Adding these tests and inserting
\eqref{eq:app-Nw-pairing}--\eqref{eq:app-Nwt-pairing} gives
\begin{equation}\label{eq:app-fluid-difference}
 \frac{\dd}{\dd t}\mathfrak E_f+c_0\mathfrak D_f
 \le C_T\bigl(\mathfrak E_f+\mathfrak R_m\bigr).
\end{equation}
No spatial derivative of $\mathcal N$ is used in this difference estimate.
Subtracting \eqref{eq:app-transformed-momentum} also gives the elliptic
recovery
\begin{equation}\label{eq:app-fluid-elliptic-recovery}
 \norm[L^2]{w_{xx}}^2
 \le C_T\bigl(\mathfrak E_f+\mathfrak D_f+\mathfrak R_m\bigr).
\end{equation}

It remains to estimate the transported magnetic differences.  They satisfy
\begin{align}
 z_t+u_1z_x&=-w(h_2)_x,\label{eq:app-z-difference}\\
 \ell_t+u_1\ell_x&=-w(k_2)_x.\label{eq:app-l-difference}
\end{align}
The support of $k_i(t)$ stays in a fixed compact interval $K_T$.
Consequently $(h_i)_x=\rho_i k_i$ is supported in $K_T$, and
$\vartheta_1$ has a positive lower bound there.  In particular,
\begin{equation}\label{eq:app-local-w-control}
 \norm[H^1(K_T)]{w}^2
 \le C_T\left(
   \norm[L^2]{\sqrt{\vartheta_1}w}^2+\norm[L^2]{w_x}^2\right)
 \le C_T\mathfrak E_f.
\end{equation}
The $H^1$--$L^2$
transport estimates therefore yield
\begin{equation}\label{eq:app-magnetic-transport-difference}
 \frac{\dd}{\dd t}\left(\norm[H^1]{z}^2+\norm[L^2]{\ell}^2\right)
 \le C_T\left(\norm[H^1]{z}^2+\norm[L^2]{\ell}^2
                         +\mathfrak E_f\right).
\end{equation}
For $0<\delta<1$, combining
\eqref{eq:app-fluid-difference} and
\eqref{eq:app-magnetic-transport-difference} closes the argument directly.

For $\delta=1$ one also propagates the raw density difference.  The
continuity equations imply
\begin{equation}\label{eq:app-r-difference}
 r_t+u_1r_x+\rho_1w_x=-w\rho_{2x}-ru_{2x}.
\end{equation}
Since $\vartheta_1=1$, $\mathfrak E_f$ controls the full $H^1$ norm of
$w$.  Differentiating \eqref{eq:app-r-difference} once gives, for every
$\varepsilon>0$,
\begin{equation}\label{eq:app-r-energy}
 \frac{\dd}{\dd t}\norm[H^1]{r}^2
 \le C_{\varepsilon,T}\left(\norm[H^1]{r}^2+\mathfrak E_f\right)
      +\varepsilon\norm[L^2]{w_{xx}}^2.
\end{equation}
Equations \eqref{eq:app-r-energy} and
\eqref{eq:app-fluid-elliptic-recovery} close the $\delta=1$ case.

Choose $\lambda>0$ sufficiently small and define
\begin{equation}\label{eq:app-total-difference}
 \mathfrak E=\mathfrak E_f+\lambda\mathfrak R_m.
\end{equation}
Adding the corresponding transport estimates to
\eqref{eq:app-fluid-difference}, choosing $\varepsilon$ and then $\lambda$
small enough, and using \eqref{eq:app-fluid-elliptic-recovery} when
$\delta=1$, gives
\begin{equation}\label{eq:app-final-uniqueness}
 \frac{\dd}{\dd t}\mathfrak E
 +\frac{c_0}{2}\mathfrak D_f
 \le C_T\mathfrak E.
\end{equation}
Since $\mathfrak E(0)=0$, Gronwall's inequality proves uniqueness.  In the
sublinear case $\Theta=0$ implies $\rho_1=\rho_2$ pointwise; in the linear
case this follows from $r=0$.

Finally, the bounds in the continuation quantity give weakly continuous
terminal traces in all top-order spaces and strong traces one derivative
below.  Equations
\eqref{eq:app-rhox}--\eqref{eq:app-rhoxx} preserve the density data class,
the weighted velocity bounds preserve the fluid compatibility conditions,
and the characteristic formulas preserve
$k\in H^2$, compact support of $k$, and
$h\in\mathcal H_{\bar h}^3$.  The construction above can therefore be
restarted at the terminal time, which proves the continuation assertion in
\cref{prop:local}.

\section*{Data availability}
Data sharing is not applicable to this article because no datasets were
generated or analyzed in the course of this study.

\section*{Declaration of competing interest}
The authors declare that they have no known competing financial interests or
personal relationships that could have appeared to influence the work
reported in this paper.

\section*{CRediT authorship contribution statement}
Omitted from this anonymous version and to be supplied in the non-anonymous
manuscript.

\section*{Acknowledgments}
Omitted for anonymous review.

\bibliographystyle{unsrtnat}
\bibliography{references}

@article{CaoLiZhu2022,
  author  = {Cao, Yue and Li, Hao and Zhu, Shengguo},
  title   = {Global Regular Solutions for One-Dimensional Degenerate
             Compressible {Navier--Stokes} Equations with Large Data
             and Far-Field Vacuum},
  journal = {SIAM Journal on Mathematical Analysis},
  year    = {2022},
  volume  = {54},
  number  = {4},
  pages   = {4658--4694},
  doi     = {10.1137/21M1464609}
}

@article{Yu2013,
  author  = {Yu, Haibo},
  title   = {Global Classical Large Solutions with Vacuum to {1D}
             Compressible {MHD} with Zero Resistivity},
  journal = {Acta Applicandae Mathematicae},
  year    = {2013},
  volume  = {128},
  number  = {1},
  pages   = {193--209},
  doi     = {10.1007/s10440-013-9826-3}
}

@article{YeLi2019,
  author  = {Ye, Yulin and Li, Zilai},
  title   = {Global Strong Solution to the {Cauchy} Problem of {1D}
             Compressible {MHD} Equations with Large Initial Data
             and Vacuum},
  journal = {Zeitschrift f{\"u}r angewandte Mathematik und Physik},
  year    = {2019},
  volume  = {70},
  number  = {1},
  pages   = {38},
  note    = {Article 38},
  doi     = {10.1007/s00033-019-1078-2}
}

@article{LiWangYe2020,
  author  = {Li, Zilai and Wang, Huaqiao and Ye, Yulin},
  title   = {Global Strong Solutions to the {Cauchy} Problem of {1D}
             Compressible {MHD} Equations with No Resistivity},
  journal = {Communications in Mathematical Sciences},
  year    = {2020},
  volume  = {18},
  number  = {3},
  pages   = {851--873},
  doi     = {10.4310/CMS.2020.v18.n3.a12}
}

@article{GuanWei2025,
  author  = {Guan, Junhua and Wei, Suting},
  title   = {Global Classical Solutions to the {1D} Compressible
             Non-Resistive Magnetohydrodynamic Equations with
             Density-Dependent Viscosity},
  journal = {Journal of Mathematical Physics},
  year    = {2025},
  volume  = {66},
  number  = {12},
  pages   = {121514},
  doi     = {10.1063/5.0294874}
}

@misc{LiuLuoZhong2025,
  author        = {Liu, Hairong and Luo, Tao and Zhong, Hua},
  title         = {Strong Solutions to the {3-D} Compressible {MHD}
                   Equations with Density-Dependent Viscosities in
                   Exterior Domains with Far-Field Vacuum},
  year          = {2025},
  eprint        = {2504.04376},
  archivePrefix = {arXiv},
  primaryClass  = {math.AP},
  howpublished  = {arXiv:2504.04376},
  doi           = {10.48550/arXiv.2504.04376}
}

@misc{LiLuYuan2026,
  author        = {Li, Jiaxu and L{\"u}, Boqiang and Yuan, Bing},
  title         = {Strong Solutions to the Initial-Boundary-Value Problem
                   of Compressible {MHD} Equations with Degenerate
                   Viscosities and Far Field Vacuum in {3D} Exterior
                   Domains},
  year          = {2026},
  eprint        = {2602.05264},
  archivePrefix = {arXiv},
  primaryClass  = {math.AP},
  howpublished  = {arXiv:2602.05264},
  doi           = {10.48550/arXiv.2602.05264}
}

@article{BreschDesjardins2003,
  author  = {Bresch, Didier and Desjardins, Beno{\^i}t},
  title   = {Existence of Global Weak Solutions for a {2D} Viscous
             Shallow Water Equations and Convergence to the
             Quasi-Geostrophic Model},
  journal = {Communications in Mathematical Physics},
  year    = {2003},
  volume  = {238},
  number  = {1--2},
  pages   = {211--223},
  doi     = {10.1007/s00220-003-0859-8}
}

@article{BreschDesjardins2007,
  author  = {Bresch, Didier and Desjardins, Beno{\^i}t},
  title   = {On the Existence of Global Weak Solutions to the
             {Navier--Stokes} Equations for Viscous Compressible
             and Heat Conducting Fluids},
  journal = {Journal de Math{\'e}matiques Pures et Appliqu{\'e}es},
  year    = {2007},
  volume  = {87},
  number  = {1},
  pages   = {57--90},
  doi     = {10.1016/j.matpur.2006.11.001}
}

@article{LiPanZhu2017,
  author  = {Li, Yue and Pan, Ronghua and Zhu, Shengguo},
  title   = {On Classical Solutions to {2D} Shallow Water Equations with
             Degenerate Viscosities},
  journal = {Journal of Mathematical Fluid Mechanics},
  year    = {2017},
  volume  = {19},
  pages   = {151--190},
  doi     = {10.1007/s00021-016-0276-3}
}

@article{XinZhu2021,
  author  = {Xin, Zhouping and Zhu, Shengguo},
  title   = {Well-Posedness of Three-Dimensional Isentropic Compressible
             {Navier--Stokes} Equations with Degenerate Viscosities and
             Far Field Vacuum},
  journal = {Journal de Math{\'e}matiques Pures et Appliqu{\'e}es},
  year    = {2021},
  volume  = {152},
  pages   = {94--144},
  doi     = {10.1016/j.matpur.2021.05.004}
}

\end{document}